\DeclareMathOperator{\tr}{tr\,}
\newcommand{\nn}{\nonumber}
\newcommand{\ths}{\theta^{\star}}
\newcommand{\ve}{\varepsilon}
\newcommand{\R}{{\mathbb R}}
\newcommand{\D}{{\mathbb C}}
\newcommand{\T}{{\mathbb T}}
\newcommand{\PT}{{P\mathbb{T}}}
\newcommand{\TC}{{\mathcal T}}
\newcommand{\pis}[1]{\mathcal{#1}}
\newcommand{\di}{{\mathrm d}}
\renewcommand{\S}{{\cal S}^3}
\renewcommand{\SS}{{\cal S}^3_{\mathrm{dev}}}
\newcommand{\eps}{\varepsilon}
\newcommand{\epsp}{\varepsilon^p}
\newcommand{\epsps}{\varepsilon^\star}
\newcommand{\Tl}{\T^\lambda}
\newcommand{\epspl}{\varepsilon^{p,\lambda}}
\newcommand{\thl}{\theta^\lambda}
\newcommand{\tht}{\tilde{\theta}}
\newcommand{\ul}{u^\lambda}
\newcommand{\Ho}{H^1(\Omega)}
\newcommand{\HoR}{H^1(\Omega;\R^3)}
\renewcommand{\r}{_{\Delta}}
\newcommand{\id}{\mathbbm{1}}
\newcommand{\comment}[1]{\marginpar{\flushleft{\textsf{\scriptsize{#1}}}}}
\newcommand{\dyw}{\operatorname{div}_x}
\newcommand{\graph}{\operatorname{graph}\,}
\newcommand{\norm}[1]{\big\|#1\big\|}
\renewcommand{\leq}{\leqslant}
\renewcommand{\geq}{\geqslant}
\title{\textsc{On renormalised solution for thermomechanical problem in perfect-plasticity}}
\author{\textbf{Leszek Bartczak}\\[0.5ex] 
\textbf{\footnotesize{Faculty of Mathematics and
 Information Science, Warsaw University of Technology,}}\\[-1ex]
\textbf{\footnotesize{ul. Koszykowa 75, 00-662 Warsaw, Poland}}\\[-1ex]
\textbf{\footnotesize{E-Mail: l.bartczak@mini.pw.edu.pl}}\\[2ex]
\textbf{Sebastian Owczarek}\\[0.5ex] 
\textbf{\footnotesize{Faculty of Mathematics and
 Information Science, Warsaw University of Technology,}}\\[-1ex]
\textbf{\footnotesize{ul. Koszykowa 75, 00-662 Warsaw, Poland}}\\[-1ex]
\textbf{\footnotesize{E-Mail: s.owczarek@mini.pw.edu.pl}}}
\date{}
\newtheorem{tw}{Theorem}[section]
\newtheorem{lem}[tw]{Lemma}
\newtheorem{de}[tw]{Definition}
\newtheorem{col}[tw]{Corollary}
\newtheorem{uwa}[tw]{Remark}
\begin{document}
\maketitle
\begin{abstract}We consider the quasi-static evolution of the thermo-plasticity model in which the evolution equation law for the inelastic strain is given by the Prandtl-Reuss flow rule. The thermal part of the Cauchy stress tensor is not linearised in the neighbourhood of a references temperature. This nonlinear thermal part imposed to add a damping term to the balance of the momentum, which can be interpreted as external forces acting on the material.

In general the dissipation term occurring in the heat equation is integrable function only and the standard methods can not be applied. Combining truncation techniques and Boccardo-Gallou\"et approach with monotone methods we prove an existence of renormalised solutions.
\end{abstract}

\section{Introduction and formulation of the problem}
The goal of the current paper is to study a perfect plasticity coupled with a heat conduction equation. We extend results obtained in \cite{ChelminskiOwczarekthermoI} and \cite{ChelminskiOwczarekthermoII}, where a constitutive equation for a plastic stain is of a Norton-Hoff type. Properties of the Norton-Hoff's flow rule allow to apply less complicated mathematical analysis. For example when the Minty-Browder's monotone trick was used we did not need an estimate of the time derivative of stress tensor. It is worth to mention, that in a classical approach to a purely mechanic Prandtl-Reuss model it turns out, that a plastic part is represented as a measure in a sense of Temam (see for example \cite{temam1}, \cite{Chelminski00} \cite{chel01}).

Firstly we present shortly the associated conservation laws from which the thermo-perfect plasticity system arises. All functions (apart from initial conditions) occurring in this article are functions dependent on the position $x\in\Omega$ and time $t\in(0,T)$, where the considered continuum occupies a bounded domain $\Omega\subset\R^3$ with boundary $\partial\Omega $ of class $C^2$ and has the constant density of mass $\rho>0$. $T>0$ is fixed length of the time interval.

Let us introduce the displacement vector $u$, the symmetric part of the gradient of the displacement $\ve(u)=\frac{1}{2}(\nabla_x u+\nabla_x^T u)$ (the linearised strain tensor), the total stress tensor $\sigma$ and the temperature of the body $\theta$.

In this article we deal with small deformations only, therefore the balance law of internal energy $e$ is in the following form
\begin{equation}
\label{balance}
\rho e_t + \dyw q=\sigma :\ve(u_t)+h\,,
\end{equation}
where $q$ is the heat flux, $h$ heat sources and $\ve(u_t)$ the symmetric part of the strain rate tensor. We assume that the constitutive equations for the total stress and the heat flux are in the form
\begin{equation}\begin{split}
\sigma&=\D(\ve(u)-\ve^p)-f(\theta)\id \,,\\[1ex]
q&=-\kappa\nabla\theta\,,
\label{consti}\end{split}
\end{equation}
where $\ve^p$ denotes the inelastic part of strain tensor,  $\D$ the given elasticity tensor (symmetric and positive definite on the space of symmetric $3\times 3$-matrices) and $\id$ the identity matrix. Entries of the tensor $\D$ do not depend on a material point $x$ and a time $t$. The first equation of (\ref{consti}) is a generalisation of Hook's law in which the thermal part of stress is given by the non-linear contituous function $f\colon\R\to \R$. The second one is Fourier's law with thermal conductivity $\kappa>0$. The standard density of an internal energy $e$ for this type of issues (quasistatic case) is in the form
\begin{equation}
\label{rozklad}
\rho e = c\theta+\frac{1}{2}\D^{-1}\T: \T\,,
\end{equation}
where $$\T=\sigma+f(\theta)\id=\D(\ve(u)-\epsp).$$ Substituting (\ref{consti}) and (\ref{rozklad}) to (\ref{balance})  we obtain the complete form of energy balance 
\begin{equation}
\label{enb}
c\theta_t-\kappa\Delta\theta+f(\theta)\dyw u_t=\T:\ve^p_t+h\,.
\end{equation}
To close our system of equations we need constitutive relation on inelastic part of the strain. In the theory of inelastic deformations for metals it is assumed that we know a evolution in time of $\ve^p$ {\it i.e.}
\begin{equation}
\label{fl}
\ve^p_t=(\in)\, G(T)\,,
\end{equation}
where $G$ is a function or, in some cases as well in the case considered in the current paper, a multivalued function. In the literature we can find a lot of examples of the function $G$ (see for instance \cite{alber,chegwia1,owcz2,ChelNeffOwczarek14} and many others). Observe that selection of the vector fields $G$ lead to study different models. In this article we consider the rate-independent Prandtl-Reuss model of the elasto-perfect plasticity. This constitutive law is studied most often in the literature \cite{Lionsfr}. For simplicity we study the Prandtl-Reuss model with von Mises criterion \cite{Mises1913,Lionsfr} {\it i.e.}
\begin{equation}
\label{P-R}
\ve^p_t\in \partial  I_{K}
(\T) \,,
\end{equation}
where the set of admissible elastic stresses $K$ is defined in the following form 
$$
K  = \{\T \in \S : |\PT| = |\T - \frac{1}{3} \tr (\T)
\id | \leq k \}
$$
and $k > 0$ is a given material constant (the yield limit). As it was mentioned previously $\id$ denotes the $3\times 3$ identity matrix, while $\S$ the set of symmetric $3\times 3$ real-valued matrices, hence  $P:\S\rightarrow P\S$ is the projector on the deviatoric part of symmetric matrices. The function $I_{K}$ is the indicator function of the admissible set $K$, this means that 
\begin{displaymath}
I_{K}(T)= \left\{ \begin{array}{ll}
0\quad & \textrm{for }\, T\in K\,,\\
\infty\quad & \textrm{for }\  T\notin K\,. \\
\end{array} \right.
\end{displaymath}
Finally, the function $\partial  I_{K}$ denotes the subgradient of the convex, proper, lower semicontinuous function $I_K$ in the sense of  convex analysis (see for instance \cite{AubFran}).

In this paper we consider the balance of forces in quasistatic case, that mean the inertial term $\rho u_{tt}$ is negligible.

Finally the main system studying in current article yields
\begin{equation}
\begin{split}
\dyw  \sigma(x,t)&=-F(x,t)-\dyw\D (\ve(u_t(x,t)))\,,\\[1ex]
\sigma(x,t)&=\D(\ve(u(x,t))-\ve^p(x,t))-f(\theta(x,t))\id \,,\\[1ex]
\varepsilon^p_t(x,t) &\in \partial  I_{K}
(\T(x,t))\,,\\[1ex]
\T(x,t)&=\D(\ve(u(x,t))-\ve^p(x,t))\,,\\[1ex]
\theta_t(x,t)-\Delta\theta(x,t)&=-f(\theta(x,t))\dyw u_t(x,t)+\ve^p_t(x,t): \T(x,t)\,,
\end{split}\label{eq:1.1}
\end{equation}
where 
\begin{equation*}
u\colon [0,T]\times\Omega\to \R^3\,,\qquad \ve^p\colon [0,T]\times\Omega\to \S\qquad\text{and}\qquad
\theta\colon [0,T]\times\Omega\to \R
\end{equation*}
are unknown functions. It is worth to emphasize, that the term $\dyw\D(\ve(u_t))$ on the right-hand side of the balance of forces, we treat as a regularisation not as a part of the stress tensor as it is treated in Kelvin-Voigt materials. We complete the system \eqref{eq:1.1} with the homogeneous Dirichlet boundary condition for the displacement and the Neumann boundary condition for the temperature
\begin{equation}
\label{eq:1.3}
\begin{split}
u(x,t)&=0\,,\\[1ex]
\frac{\partial\,\theta}{\partial\,n}(x,t)&=g_{\theta}(x,t)\,
\end{split}
\end{equation}
for $t\geq 0$ and $x\in \partial \Omega$, while the initial data are given by
\begin{equation}
\label{eq:1.4}
\begin{split}
u(x,0)&=u_0(x)\,,\\[1ex]
\ve^p(x,0)&=\ve^{p,0}(x)\,,\\[1ex]
\theta(x,0)&=\theta_0(x)\,
\end{split}
\end{equation}
for $x\in\Omega$.
Notice that in the system (\ref{eq:1.1}) the thermal stress, given by $-f(\theta)\id$, is not linearised in the neighbourhood of the reference temperature. The dissipation term on the right-hand side of energy balance $(\ref{eq:1.1})_5$ is usually in a space $L^1((0,T)\times\Omega;\R)$ only. From articles \cite{bocc89}, \cite{blanmur} and \cite{dall96} we deduce that a solution of heat equation with $L^1$-data is expected as a function from $L^p((0,T)\times\Omega;\R)$ for $p<\frac{N+2}{N}$, where $N$ denotes the dimension of a space. For this reason in this  paper it is assumed that the function $f:\R\rightarrow\R$ is continuous and satisfies the following growth condition
\begin{equation}\label{growth1}
|f(r)|\leq a+M|r|^{\alpha}\qquad \mathrm{for\,all}\qquad r\in\R\,,\,\, a,\,M\geq 0\,\,\, \mathrm{and}\,\,\, \alpha\in\Big(\frac{1}{2},\frac{5}{6}\Big)
\end{equation} 
 and there exists constant $C>0$ such that
\begin{equation}\label{growth2}
|f(r)|\leq C(1+|r|)^{\frac{1}{2}}\qquad \mathrm{for\,all}\qquad r\in\R_{-}\,.
\end{equation}
In general case $\alpha<\frac{N+2}{2N}$. The motivation of above assumptions is derived from the articles \cite{blangui} and \cite{BlanchardGuibe00} where a thermo-visco-elastisity ploblem for the Kelvin-Voigt material was studied. Nevertheless, it is worth to emphasize that in our paper we do not need assume that derivative of the function $f$ is bounded (as was assumed in \cite{blangui} and \cite{BlanchardGuibe00}).

Another point of view on thermo-mechnical problems is presented by S. Bartels and T. Roub{\'{\i}}{\v{c}}ek (see for example \cite{RouBart1} and \cite{RouBart2}) where authors use, so called, enthalpy transformation and consider energetic solutions. In both of those papers authors study Kelvin-Voigt viscous material, but in the article \cite{RouBart1} they consider a plasticity with hardening in quasistatic case, while in  \cite{RouBart2} the perfect plasticity in dynamical case is considered.

It is worth to emphasize the works \cite{GwiazdaKlaweSwierczewska014,GwiazdaKlaweSwierczewska14,tve-Orlicz} and \cite{KlaweOwczarek}, where the authors deal with similar type of thermo-visco-elasticity systems. In considered problems the thermal expansion does not appear, which means that the Cauchy stress tensor does not depend on temperature function. This main assumption leads to consider systems without additional damping term (the nonlinear term $f(\theta)\dyw u_t$ does not appear). Coupling between temperature and displacement occur only in flow rules. Using very special two level Galerkin approximation (proposed by Gwiazda at al.), the existence of weak solutions was proved. An important issue is the fact that the systems considered by Gwiazda at al. and in system (\ref{eq:1.1}), the total energy is conserved. Contrary to the systems analysed in \cite{JR,Haupt,cherack,bartczak,bartczak2} in which the lack of the total energy is observed. It is caused by the linearisation. The temperature occurring in nonlinear term of heat equation is linearised only (without any linearision of the Cauchy stress tensor).

Let us recall that we consider the system of equations in which the right-hand side of heat equation is expected in $L^1((0,T)\times\Omega;\R)$. It is known that in general, for integrable data a weak solution might not exist. Therefore DiPerna and Lions introduced a notion of renormalised solution for the Boltzmann equation in \cite{DiPernaLions}, to obtain well-posedness for this type problems. Such a notion was also adapted to elliptic equations with integrable data in \cite{BoccardoDiaz89,Murat93} and to parabolic equations in \cite{blan,blanmur,BlanchardMuratRedwane}

Now, we define a notion of renormalised solutions for the system (\ref{eq:1.1}).
Suppose that our data have the following regularity
\begin{equation}
\label{eq:Z0}
F\in L^{2}(0,T;L^2(\Omega;\R^3))\,,
\end{equation}
\begin{equation}
\label{eq:Z1}
g_{\theta}\in H^{1}(0,T;L^2(\partial\Omega;\R))\,,
\end{equation}
\begin{equation}
\label{eq:Z2}
u_0\in H^1(\Omega;\R^3) \textrm{,}\quad \ve^{p,0}\in L^2(\Omega;\SS)\textrm{,}\quad \theta_0\in H^1(\Omega;\R)\,.
\end{equation}
Now, we define a notion of renormalised solutions for the system (\ref{eq:1.1}). For any positive real number $K$, let us define the truncation function $\TC_K$ at height $K$ {\it i.e.} 
\begin{equation}\label{eq:ciecie}
\TC_{K}(r)=\min\{K,\max(r,-K)\}.
\end{equation}
Notice that $\TC_K(\cdot)$ is a real-valued Lipschitz function. Moreover, let us define $\varphi_K(r)=\int\limits _0^r \TC_K(s)\,\di s$, hence 
\begin{displaymath}
\varphi_K(r) = \left\{ \begin{array}{ll}
\frac{1}{2}r^2 & \textrm{if}\quad |r|\leq K\,,\\[1ex]
\frac{1}{2}K^2+K(|r|-K) & \textrm{if}\quad |r|>K\,,\\
\end{array} \right.
\end{displaymath}
and $\varphi_K$ is a $W^{2,\infty}(\R;\R)$-function with linear growth at infinity. Now we are ready to define a renormalised solutions to the problem (\ref{eq:1.1}).
\begin{de}
\label{de:1.1}
Suppose that the given data satisfy (\ref{eq:Z1}) and (\ref{eq:Z2}). A renormalised solution to the problem (\ref{eq:1.1}) - (\ref{eq:1.4}) is a vector $(u,\theta,\ve^p)$ satisfying the following conditions:\\[1ex]
{\bf 1.}
$$u\in H^{1}(0,T;H^1_0(\Omega;\R^3))\,,\quad \ve^p\in H^{1}(0,T;L^{2}(\Omega;\SS))\,,\\[1ex]$$
$$\theta\in C([0,T];L^1(\Omega;\R))\,,\quad
f(\theta)\in L^2(0,T;L^2(\Omega;\R))\,.$$
{\bf 2.}\hspace{2ex} 
$$\mathrm{div}\big(\sigma+\D (\ve(u_t))\big)\in L^{2}(0,T;L^2(\Omega;\R^3))$$
 and the equations $(\ref{eq:1.1})_1$-$(\ref{eq:1.1})_4$ are satisfied for almost all $(x,t)\in\Omega\times (0,T)$.\\[1ex]
{\bf 3.}\hspace{2ex} For each positive number $K>0$, $\TC_K(\theta)\in L^{2}(0,T;H^1(\Omega;\R))$ and the following equation
\begin{equation*}
\begin{split}
-\int\limits_0^{T}\int\limits_{\Omega}&S(\theta-\tilde{\theta})\,\varphi_t\,\di x\,\di t+\int\limits_{\Omega}S(\theta_0)\,\varphi(0,x)\,\di x\,\\[1ex]
+ \int\limits_0^{T}\int\limits_{\Omega}&S'(\theta-\tilde{\theta})\,\nabla(\theta-\tilde{\theta})\,\nabla\varphi\,\di x\,\di t+\int\limits_0^{T}\int\limits_{\Omega}S''(\theta-\tilde{\theta})\,|\nabla(\theta-\tilde{\theta})|^2\,\varphi\,\di x\,\di t\\[1ex]
&\qquad=\int\limits_0^{T}\int\limits_{\Omega} \Big(\epsp_t:\T - f(\theta)\mathrm{div}\,u_t\Big)\,S'(\theta-\tilde{\theta})\,\varphi\,\di x\,\di t
\end{split}
\end{equation*}
holds for all functions $\varphi\in C_0^{\infty}([0,T);H^1(\Omega;\R)\cap L^{\infty}(\Omega;\R))$ and $S\in C^{\infty}(\R;\R)$ such that $S'\in C^{\infty}_0(\R;\R)$, where $\tilde{\theta}$ is a solution of the following problem
\begin{eqnarray*}
\tilde{\theta}_t-\Delta\tilde{\theta}&=&0 \quad\,\, \textrm{in}\quad \Omega\times (0,T)\,,\nn\\[1ex]
\frac{\partial\,\tilde{\theta}}{\partial\,n}&=&g_{\theta}\quad \textrm{on}\quad \partial \Omega\times (0,T)\,,\nn\\[1ex]
\tilde{\theta}(x,0)&=&0\quad\,\, \textrm{in}\quad \Omega.\\
\end{eqnarray*}
{\bf 4.}\hspace{2ex} For any positive real number $C$ 
$$\TC_{K+C}(\theta)-\TC_K(\theta)\rightarrow 0\quad\mathrm{in}\quad L^2(0,T;H^1(\Omega;\R))$$
as $K$ goes to infinity.\\[1ex]
{\bf 5.}
$$u(x,0)=u_0(x),\quad \ve^p(x,0)=\ve^{p,0}(x),\quad \theta(x,0)=\theta_0(x)\,.$$
\end{de}
The main result of the present paper is the following theorem.
\begin{tw}$\mathrm{(Main\, Theorem)}$\\[1ex]
\label{tw:1.2}
Suppose that the boundary and initial data have the regularity required in (\ref{eq:Z1}) and (\ref{eq:Z2}), while the given function $F$ satisfies \eqref{eq:Z0}. Let the nonlinearity $f$ be a continuous function satisfying the growth conditions \eqref{growth1} and \eqref{growth2}. Moreover, let us assume that initial data satisfy
$$
|P\T(0,x)|=|P\D(\ve(u_0)-\epsp_0)|\leq k\,,\quad\text{for a.e.}\;x\in\Omega.
$$
 Then, for all $\T>0$ the system (\ref{eq:1.1}) with the boundary condition (\ref{eq:1.3}) and the initial condition (\ref{eq:1.4}) possesses a renormalised solution in the sense of Definition \ref{de:1.1}.
\end{tw}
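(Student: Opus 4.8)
The plan is to regularise the rate--independent flow rule by a viscous (Yosida) approximation, to produce solutions of the regularised problem by a further inner approximation, and then to pass to the limit; the heat equation is kept in the renormalised form throughout, so that its merely integrable right--hand side is handled by the truncation/Boccardo--Gallou\"et technique. For $\lambda>0$ we replace $\partial I_K$ by its Yosida approximation $A_\lambda=\tfrac{1}{\lambda}(\id-P_K)$, with $P_K$ the metric projection of $\S$ onto $K$, so that the flow rule becomes $\epspl_t=A_\lambda(\Tl)$, $\Tl=\D(\ve(\ul)-\epspl)$, which is single--valued and globally Lipschitz for fixed $\lambda$. The $\lambda$--system is solved by an inner scheme — a Galerkin approximation of $(\ul,\thl)$ coupled with the ODE for $\epspl$, together with a truncation of the dissipation $\epspl_t:\Tl$ and of $f$ in the heat equation — and passage to the limit in the inner parameter gives a renormalised solution of the $\lambda$--problem; alternatively one invokes at this point the analysis of \cite{ChelminskiOwczarekthermoI,ChelminskiOwczarekthermoII}, to which the viscous system is structurally close.

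\textbf{Uniform a priori estimates.} Testing $(\ref{eq:1.1})_1$ with $\ul_t$, using the homogeneous Dirichlet condition, and adding $(\ref{eq:1.1})_5$ integrated over $\Omega$, the terms $\epspl_t:\Tl$ and $f(\thl)\dyw\ul_t$ cancel and one gets the total energy balance
\begin{equation*}
\tfrac{\di}{\di t}\Big(\tfrac12\iprod{\D^{-1}\Tl,\Tl}+\int_\Omega\thl\,\di x\Big)+\iprod{\D\ve(\ul_t),\ve(\ul_t)}=\iprod{F,\ul_t}+\int_{\partial\Omega}g_\theta\,.
\end{equation*}
By Korn's and Poincar\'e's inequalities this yields, uniformly in $\lambda$, $\Tl$ bounded in $L^\infty(0,T;L^2(\Omega;\S))$, $\ul_t$ bounded in $L^2(0,T;\HoR)$, $\sup_t\int_\Omega\thl$ bounded, and, since $\epspl_t:\Tl\geq0$, $\epspl_t:\Tl$ bounded in $L^1((0,T)\times\Omega;\R)$. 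Testing the heat equation with a suitable truncation of the negative part of $\thl$ and using $\epspl_t:\Tl\geq0$ with the sub--square--root growth \eqref{growth2} of $f$ on $\R_-$, a Gronwall argument bounds the negative part of $\thl$ in $L^\infty(0,T;L^2(\Omega;\R))$. Writing $z^\lambda=\thl-\tht$, which has homogeneous Neumann data and an $L^1$ source, the Boccardo--Gallou\"et method bounds $\TC_K(z^\lambda)$ in $L^2(0,T;\Ho)$ for every $K$ and $z^\lambda$ in $L^p((0,T)\times\Omega;\R)$ for every $p<\tfrac53$; together with the bound on the negative part, the regularity of $\tht$, and \eqref{growth1} with $\alpha<\tfrac56$, this gives $f(\thl)$ bounded in $L^{2+\delta}((0,T)\times\Omega;\R)$ for some $\delta>0$. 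The decisive estimate is as follows: from $(\ref{eq:1.1})_1$, $\ul_t$ solves an elliptic system whose right--hand side is bounded in $L^2(0,T;H^{-1})$ in terms of $\Tl$, $f(\thl)$ and $F$, so $\ve(\ul_t)$ is bounded in $L^2((0,T)\times\Omega;\S)$; testing $\D^{-1}\Tl_t+A_\lambda(\Tl)=\ve(\ul_t)$ with $\Tl_t$ and using $\iprod{A_\lambda(\Tl),\Tl_t}=\tfrac{\di}{\di t}(I_K)_\lambda(\Tl)\geq0$ and $(I_K)_\lambda(\Tl(0))=\tfrac{1}{2\lambda}\operatorname{dist}(\T(0),K)^2=0$ — here the admissibility hypothesis $|\PT(0)|\leq k$ is essential — gives $\int_0^T\iprod{\D^{-1}\Tl_t,\Tl_t}\,\di t\leq\int_0^T\iprod{\ve(\ul_t),\Tl_t}\,\di t$, whence $\Tl_t$ and $\epspl_t=\ve(\ul_t)-\D^{-1}\Tl_t$ are bounded in $L^2((0,T)\times\Omega;\S)$ uniformly in $\lambda$. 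Thus $\ul$ is bounded in $H^1(0,T;\HoR)$, $\epspl$ in $H^1(0,T;L^2(\Omega;\SS))$ and $\Tl$ in $H^1(0,T;L^2(\Omega;\S))$.

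\textbf{Passage to the limit.} Up to a subsequence $\ul\rightharpoonup u$, $\epspl\rightharpoonup\ve^p$, $\Tl\rightharpoonup\T$ weakly in $H^1(0,T;L^2)$, $f(\thl)\rightharpoonup\ell$ in $L^2$, and $\TC_K(z^\lambda)\rightharpoonup\TC_K(z)$ in $L^2(0,T;\Ho)$. Using the heat equation to bound $\partial_t\TC_K(z^\lambda)$ in $L^1(0,T;W^{-1,s}(\Omega))+L^1((0,T)\times\Omega)$, Aubin--Lions gives $\TC_K(z^\lambda)\to\TC_K(z)$ strongly in $L^2((0,T)\times\Omega)$ for every $K$, hence (diagonally) $z^\lambda\to z$ and $\thl\to\theta:=z+\tht$ a.e.; so $\ell=f(\theta)$ and, by Vitali and the uniform $L^{2+\delta}$ bound, $f(\thl)\to f(\theta)$ strongly in $L^2((0,T)\times\Omega)$. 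Equations $(\ref{eq:1.1})_1,(\ref{eq:1.1})_2,(\ref{eq:1.1})_4$ pass to the limit. Subtracting the momentum balances for $\lambda$ and for the limit, testing the difference with $\ul_t-u_t$, and using the coercivity of $\D$ and the monotonicity $\iprod{A_\lambda(\Tl)-\xi,P_K\Tl-\T}\geq0$ for $\xi\in\partial I_K(\T)$, one obtains
\begin{equation*}
\tfrac12\,\|\D^{-1/2}(\Tl-\T)\|^2_{L^\infty(0,T;L^2)}+c\,\|\ve(\ul_t-u_t)\|^2_{L^2((0,T)\times\Omega)}\leq C\lambda+C\,\|f(\thl)-f(\theta)\|^2_{L^2((0,T)\times\Omega)}\,,
\end{equation*}
so $\Tl\to\T$ strongly in $C([0,T];L^2)$ and $\ul_t\to u_t$ strongly in $L^2(0,T;\HoR)$. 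Minty's trick for the maximal monotone graph $\partial I_K$ (with $\epspl_t\rightharpoonup\ve^p_t$, $\Tl\to\T$, and $A_\lambda\to\partial I_K$ in the graph sense) gives $\ve^p_t\in\partial I_K(\T)$ a.e., i.e. $(\ref{eq:1.1})_3$. Moreover $\epspl_t:\Tl\rightharpoonup\ve^p_t:\T$ weakly in $L^1((0,T)\times\Omega)$ (weak $L^2$ against strong $L^2$); being weakly $L^1$--convergent this sequence is equi--integrable and therefore passes to the limit tested against the bounded functions $S'(z^\lambda)\varphi$, while $f(\thl)\dyw\ul_t\to f(\theta)\dyw u_t$ strongly in $L^1$. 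Passing to the limit in the renormalised identity for $z^\lambda$, and noting that $\dyw(\sigma+\D\ve(u_t))=-F\in L^2$, that the initial data are attained (by the convergences in $C([0,T];L^2)$ and $C([0,T];L^1)$), and that property~4 of Definition~\ref{de:1.1} follows from the Boccardo--Gallou\"et estimates, we conclude that $(u,\theta,\ve^p)$ is a renormalised solution.

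\textbf{Main obstacle.} The crux is the interplay between the non--coercive, rate--independent flow rule — for which $\ve^p_t$ is, in the classical Prandtl--Reuss theory, only a measure — and the merely integrable right--hand side of the heat equation. The new ingredient is the uniform $L^2$--bound on $\Tl_t$ (equivalently $\epspl_t$), obtained from the gradient--flow structure $\D^{-1}\Tl_t+A_\lambda(\Tl)=\ve(\ul_t)$ together with the admissibility $|\PT(0)|\leq k$ of the initial stress; without it one cannot reach the space $H^1(0,T;L^2(\Omega;\SS))$ required in Definition~\ref{de:1.1}. The second delicate point is the strong convergence of the stress $\Tl$ — needed both for Minty's argument and for the equi--integrability of the dissipation — which rests on the strong $L^2$--convergence of $f(\thl)$, hence on the Boccardo--Gallou\"et estimates being sharp enough; this is precisely where the growth restrictions \eqref{growth1}--\eqref{growth2} enter.
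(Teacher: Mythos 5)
Your overall plan — Yosida approximation with truncation, the energy estimate, the crucial uniform $L^2$ bound on $\Tl_t$ from the gradient--flow identity $\frac{\di}{\di t}(I_K)_\lambda(\Tl)=\iprod{Y_\lambda(\Tl),\Tl_t}$ together with the admissibility of the initial stress, the Boccardo--Gallou\"et estimates, and the strong convergence of $\Tl$ — matches the paper's strategy, and you correctly identify the stress--rate estimate as the new ingredient that is unavailable for the pure Prandtl--Reuss problem. Two differences, one cosmetic and one substantive.

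Cosmetic: you propose Galerkin for the inner approximation, whereas the paper solves the $\lambda$--problem by a two--level fixed--point argument (a Banach contraction for the purely mechanical subproblem followed by Schauder for the $\theta$--coupling); both are viable.

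Substantive, and here there is a genuine gap: your strong--convergence step is circular. You write that you subtract the momentum balances ``for $\lambda$ and for the limit'' and use ``the monotonicity $\iprod{A_\lambda(\Tl)-\xi,P_K\Tl-\T}\geq0$ for $\xi\in\partial I_K(\T)$'' to obtain $\|\D^{-1/2}(\Tl-\T)\|^2\leq C\lambda+C\|f(\thl)-f(\theta)\|^2$. But to make the flow--rule terms cancel you must take $\xi=\ve^p_t$ with $\ve^p_t\in\partial I_K(\T)$ — and that inclusion is exactly what you have not yet proved (you invoke Minty only afterwards). So the limit system you are subtracting is not known to hold. The paper breaks this circularity with a Cauchy argument (Theorem~\ref{tw:4.6}): one compares two approximation levels $\lambda$ and $\mu$, exploits the near--monotonicity of the pair $(Y_\lambda,Y_\mu)$ of Yosida approximants — the defect $\iprod{Y_\lambda(\Tl)-Y_\mu(\T^\mu),\Tl-\T^\mu}$ is bounded below by $-C(\lambda+\mu)$ once $\{\epspl_t\}=\{Y_\lambda(\Tl)\}$ is uniformly $L^2$--bounded (Corollary~\ref{col:4.3}), which is precisely what the stress--rate estimate provides — and concludes that $\{\Tl\}$ is Cauchy in $C([0,T];L^2)$. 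Once the strong limit $\T$ is in hand, the identification $\ve^p_t\in\partial I_K(\T)$ is not really ``Minty's trick'' (the paper even remarks that Minty is unhelpful here because the flow rule lacks a power--law structure): it follows directly from the resolvent convergence $R_\lambda(\Tl)\to\T$ strongly, $Y_\lambda(\Tl)\rightharpoonup\ve^p_t$ weakly, $(R_\lambda(\Tl),Y_\lambda(\Tl))\in\graph\partial I_K$, and the strong--weak closedness of the maximal monotone graph. You should replace your limit--comparison step with this Cauchy argument; the rest of your sketch then goes through.

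One more small point: in the truncated system the dissipation term is $\TC_{1/\lambda}(\epspl_t:\Tl)$, not $\epspl_t:\Tl$, so the exact cancellation in your ``total energy balance'' does not hold; one gets only the inequality $\TC_{1/\lambda}(\epspl_t:\Tl)\leq\epspl_t:\Tl$, which is harmless but should be acknowledged when stating the energy estimate.
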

The main idea of the proof of Theorem \ref{tw:1.2} is based on the Yosida approximation and a truncations. This means that we are going to use Yosida approximation to a maximal monotone multi-valued operator $\partial I_K$ and truncate the temperature function occurring in the Cauchy stress tensor. It impose that the dissipation term is also truncated. We would like to underline that the approximation is made on the same level as the truncution. In previous articles \cite{ChelminskiOwczarekthermoI} and \cite{ChelminskiOwczarekthermoII} two level approximation was used. Firstly, the considered system was approximate by a truncated systems (the first level). Next Yosida approximation was used, to prove an existence result for the obtained truncated systems (the second level). An another difference is the fact that in \cite{ChelminskiOwczarekthermoII} was applied Minti's monotonicity trick, to characterise a weak limits of nonlinearities and an estimate for a time derivative of the stress was not required. Here the flow rule does not have a structure of power law and Minti's trick is unhelpful. Therefore an additional estimate on a time derivative of the stress tensor is needed.  

Let us shortly summarize the contents of current article. 
In the second section we introduce Yosida approximation together with a truncation of the model.
Next we prove an existence of solution of the approximated model. The third section is focused on estimation for the approximated model, which are needed to pass to the limit with approximation. In the fourth section we pass to the limit and prove the main theorem (Theorem \ref{tw:1.2}).
\subsection{Transformation to a homogeneous boundary-value problem with respect to the temperature}
To make the consecutive calculation easier we transform the considered problem to the homogeneous boundary one. To proceed this, let us firstly consider the following boundary-initial linear parabolic problem:
\begin{equation}\label{eq:2.4}\left\{
\begin{array}{rl}
\tilde{\theta}_t(x,t)&-\Delta\tilde{\theta}(x,t)=0,\\[1ex]
\frac{\partial\,\tilde{\theta}}{\partial\,n}(x,t)&=g_{\theta}(x,t)\quad \textrm{ for}\quad x\in \partial \Omega \quad\textrm{and}\quad  t\geq 0\,,\\[1ex]
\tilde{\theta}(x,0)&=0\quad\,\,\,\, \textrm{ for}\quad x\in \Omega.
\end{array}\right.
\end{equation}

Assuming that $g_{\theta}$ satisfies \eqref{eq:Z1} we conclude that the system (\ref{eq:2.4}) possesses a solution $\tilde\theta\in L^{\infty}(0,T;H^1(\Omega;\R))$ such that $\tilde{\theta}_t\in L^2(0,T;L^2(\Omega;\R))$. Additionally the following estimate 
\begin{equation}
\label{eq:2.6}
\|\tilde{\theta}_t\|_{L^{2}(0,T;L^2(\Omega))}+\|\tilde{\theta}\|_{L^{\infty}(0,T;H^1(\Omega))}\leq D \,\|g_{\theta}\|_{H^{1}(0,T;L^2(\partial\Omega))} 
\end{equation} holds.
Now, if we denote by $(u,\epsp,\hat{\theta})$ the solution to problem (\ref{eq:1.1}-\ref{eq:1.4}) and define $\theta:=\hat{\theta}-\tht$, we observe that we can write the investigated problem equivalently {\it i.e.} for $x\in\Omega$ and $t\in[0,T]$
\begin{equation}\label{eq:HB}
\begin{split}
\dyw  \sigma(x,t)&=-F(x,t)-\dyw\D (\ve(u_t(x,t)))\,,\\[1ex]
\sigma(x,t)&=\D(\ve(u(x,t))-\ve^p(x,t))-f(\hat{\theta}(x,t)-\tht(x,t))\id \,,\\[1ex]
\varepsilon^p_t(x,t) &\in \partial  I_{K}
(\T(x,t))\,,\\[1ex]
\T(x,t)&=\D(\ve(u(x,t))-\ve^p(x,t))\,,\\[1ex]
\theta_t(x,t)-\Delta\theta(x,t)&=-f(\hat{\theta}(x,t)-\tht(x,t))\dyw u_t(x,t)+\ve^p_t(x,t): \T(x,t)\,,
\end{split}
\end{equation}
with the initial-boundary conditions in the following form
\begin{equation}
\label{eq:Ini-Bond}
\begin{array}{rclrcl}
u_{|_{\partial\Omega}}&=&0,\quad&
\frac{\partial\,\theta}{\partial\,n}_{|_{\partial\Omega}}&=&0, \\[1ex]
\theta(0)=\hat{\theta}_0&=&\theta_{0},&
u(0)&=&u_0,\quad
\ve^p(0)=\ve^{p,0}.
\end{array}
\end{equation}
\section{Yosida approximation and truncation of the problem}
\setcounter{equation}{0}
The first step to handle with our problem is to investigate Yosida approximation for the subgradient and, simultaneously, the truncations of terms $\theta+\tht$ and $\T:\epsp_t$. Let us emphasize that the same parameter $\lambda$ is used in truncation and Yosida approximation and this make a difference comparing with method used in \comment{cytowanie}.

\subsection{Definition of Yosida approximation and existence theorem}
To produce an approach to problem \eqref{eq:HB}, \eqref{eq:Ini-Bond} we apply to the multifunction $\partial I_K$ Yosida approximation $Y_\lambda$ (for details see {\it e.g.} \cite{AubFran}).  Similarly as in \cite{cherack} we obtain for each $\lambda>0$
\begin{equation}
Y_\lambda (\T) = 
\frac{(|\PT| - k)_+}{2\lambda} \frac{\PT}{|\PT|},
\label{eq:Yosida}
\end{equation}
where $(\xi)_+:=\max\{0,\xi\}$. For $\PT=0$ we set the value $0$ instead of $\PT/|\PT|$. Thus for all $\lambda>0$ we have to consider the following system
\begin{equation}
\label{eq:Appr}
\begin{split}
\dyw  \,\sigma^\lambda&=-F-\dyw \D (\ve(u_t))\,,\\
\sigma^\lambda&=\D(\ve(\ul)-\epspl)-f\big(\TC_{\frac{1}{\lambda}}(\thl+\tilde{\theta})\big)\id\,,\\[1ex]
\epspl_t&= Y_\lambda (\T)\,,\\[1ex]
\Tl&=\D(\ve(\ul)-\epspl)\,,\\[1ex]
\thl_t-\Delta\thl&=-f\big( \TC_{\frac{1}{\lambda}}(\thl+\tilde{\theta})\big)\dyw  \ul_t+\TC_{\frac{1}{\lambda}}\Big(\epspl_t: \Tl\Big)\,,
\end{split}
\end{equation}
where the function $\TC_{\frac{1}{\lambda}}(\cdot)$ is the truncation at height $1/\lambda>0$ defined analogously as in \eqref{eq:ciecie}.
The system  is considered with the same boundary and initial conditions as the system \eqref{eq:HB} ({\it i.e.} \eqref{eq:Ini-Bond}).

The main theorem of this section yields:
\begin{tw}[Existence of the solution to approximated system] For each $\lambda>0$ the system (\ref{eq:Appr}) with initial and boundary data (\ref{eq:Ini-Bond}) has a $L^2$-solution $(\ul,\Tl,\epspl,\thl)\in L^\infty(0,T;H^1(\Omega;\R^3)\times (L^2(\Omega;\S))^2\times H^1(\Omega;\R)$. Moreover it holds that $(\ul_t,\Tl_t,\epspl_t,\thl_t)\in L^\infty(0,T;L^2(\Omega;\R^3)\times (L^2(\Omega;\S))^2\times L^2(\Omega;\R)$.
\label{tw:istn-lambda}
\end{tw}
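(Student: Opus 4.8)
The plan is to establish existence for the approximated system \eqref{eq:Appr} by a Galerkin-type approximation in the displacement and temperature variables, combined with an ODE solve for the plastic strain, followed by a fixed-point / continuation argument. The key observation is that for fixed $\lambda>0$ the Yosida regularisation $Y_\lambda$ is globally Lipschitz on $\S$, and the truncations $\TC_{1/\lambda}$ make the thermal coupling terms $f(\TC_{1/\lambda}(\thl+\tilde\theta))$ and $\TC_{1/\lambda}(\epspl_t:\Tl)$ bounded functions of their arguments. Thus all nonlinearities are Lipschitz (or at least bounded and continuous), and we are in the realm of classical existence theory for coupled parabolic/elliptic-in-time systems rather than the delicate $L^1$-theory needed for the limit problem.

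Concretely, I would proceed as follows. First, eliminate $\sigma^\lambda$ using $\sigma^\lambda=\Tl-f(\TC_{1/\lambda}(\thl+\tilde\theta))\id$, so that $(\ref{eq:Appr})_1$ becomes, for each fixed $t$, an elliptic problem for $u^\lambda(t)$ in $H^1_0(\Omega;\R^3)$ with right-hand side depending on $\epspl(t)$, $\theta^\lambda(t)$ and $F(t)$; since $\dyw\D(\ve(u_t))$ appears, differentiating in time this is actually a (degenerate) parabolic equation for $u^\lambda$. Introduce a Galerkin basis $\{w_j\}$ of $H^1_0(\Omega;\R^3)$ (eigenfunctions of the elasticity operator) and $\{v_j\}$ of $H^1(\Omega;\R)$ (eigenfunctions of the Neumann Laplacian), project the momentum balance and the heat equation onto the first $m$ modes, and keep $\epspl$ as the genuine pointwise-in-$x$ solution of the ODE $\epspl_t=Y_\lambda(\Tl)$ with $\Tl=\D(\ve(u^\lambda_m)-\epspl)$. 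This gives a coupled system of ODEs in time (for the Galerkin coefficients) together with the $x$-parametrised ODE for $\epspl$; the right-hand side is Lipschitz in all unknowns (using Lipschitz continuity of $Y_\lambda$, of $\TC_{1/\lambda}$, and local Lipschitzness of $f$ restricted to the bounded range of $\TC_{1/\lambda}$), so Picard–Lindelöf yields a unique local solution.

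Second, I would derive the a priori estimates that are uniform in the Galerkin parameter $m$ (but may depend on $\lambda$) needed to continue the solution to all of $[0,T]$ and to pass to the limit $m\to\infty$. The natural energy estimate: test the momentum balance with $u^\lambda_{m,t}$, the heat equation with $1$ (or with $\theta^\lambda_m$), and use the ODE for $\epspl$; exploiting positive definiteness of $\D$, the monotonicity of $Y_\lambda$ (so $\epspl_t:\Tl\geq 0$, giving sign control on the dissipation), the boundedness of the truncated coupling terms, and the estimate \eqref{eq:2.6} on $\tilde\theta$, one controls $\|u^\lambda_m\|_{H^1}$, $\|u^\lambda_{m,t}\|_{L^2}$, $\|\epspl\|_{L^2}$, $\|\epspl_t\|_{L^2}$ and $\|\theta^\lambda_m\|_{H^1}$, $\|\theta^\lambda_{m,t}\|_{L^2}$ in $L^\infty(0,T)$, exactly the regularity asserted in the theorem. (Here differentiating the momentum equation in time and testing with $u^\lambda_{m,tt}$, or equivalently working with the time-integrated form, is what produces the $L^\infty(0,T;H^1)$ bound on $u^\lambda$ together with $u^\lambda_t\in L^2$; the initial compatibility $|P\D(\ve(u_0)-\epsp_0)|\leq k$ is used to start the plastic-strain estimate cleanly, although for fixed $\lambda$ it is not strictly essential.) With these bounds, standard weak/weak-$*$ compactness plus Aubin–Lions gives a subsequence converging to a limit, and the Lipschitz/continuity of all nonlinear terms (no monotone trick needed at this stage) lets one pass to the limit in the Galerkin equations and recover a solution of \eqref{eq:Appr} with the claimed regularity; the boundary and initial conditions are inherited in the usual way.

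The main obstacle I anticipate is the coupling through the term $\dyw\D(\ve(u^\lambda_t))$ in the momentum balance together with the quasistatic (no $\rho u_{tt}$) structure: the equation for $u^\lambda$ is parabolic only in the "viscous regularisation" direction, so one must be careful that the Galerkin scheme is well posed and that the time-derivative estimates close — in particular obtaining $u^\lambda_t\in L^2(0,T;H^1_0)$ (needed so that $\dyw u^\lambda_t$ makes sense in the heat equation) requires differentiating the momentum balance in time and controlling $\partial_t f(\TC_{1/\lambda}(\thl+\tilde\theta))$, which is where the $H^1$-in-time regularity of $g_\theta$ and the estimate on $\tilde\theta_t$ enter. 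A secondary technical point is that $f\circ\TC_{1/\lambda}$ need not be Lipschitz globally unless $f$ is locally Lipschitz; since the paper only assumes $f$ continuous, one should either first mollify $f$ (introducing and later removing an extra approximation parameter) or, more economically, note that continuity of $f$ on the compact range $[-1/\lambda,1/\lambda]$ suffices to pass to the limit in the Galerkin scheme via strong convergence of $\theta^\lambda_m$ from Aubin–Lions, so that local Lipschitzness is only needed for the Picard step and can be arranged by a preliminary regularisation of $f$ that does not affect the final limit.
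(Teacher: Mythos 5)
Your proposal is correct in its essentials but proceeds by a genuinely different route from the paper. The paper does not use Galerkin at all; it constructs a compact map $\pis{A}\colon L^r((0,T)\times\Omega)\to L^r((0,T)\times\Omega)$ for a fixed $r\in(1,2)$ by \emph{freezing} the temperature: given $\ths$, first solve the mechanical subsystem (where $f(\TC_{1/\lambda}(\ths+\tilde\theta))$ is now a bounded \emph{given} function, so the Yosida-regularised flow rule yields a contraction in $\epsp$ by the global Lipschitz continuity of $Y_\lambda$ — Lemma \ref{lem:ap1}), then plug the resulting dissipation and coupling terms into a linear heat equation and solve it by parabolic regularity (Lemma \ref{lem:AP2}). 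Continuity of $\pis{A}$ (Lemma \ref{lem:A-cont}) uses only continuity of $f$ as a Nemytskii operator, compactness (Lemma \ref{lem:A-comp}) comes from Aubin--Lions, and Schauder's fixed point theorem closes the argument (Lemma \ref{lem:p-staly}), with a final bootstrap to the asserted $L^2$ regularity. The decisive advantage of this decoupled Schauder scheme over your Galerkin-plus-Picard scheme is exactly the point you flag at the end: the paper's assumptions give $f$ merely continuous, and Picard--Lindel\"of for your finite-/Banach-dimensional ODE system needs a locally Lipschitz vector field, which forces you to mollify $f$ and then remove the mollification — an extra approximation parameter. By freezing $\ths$ first, the paper never needs Lipschitzness of $f$ anywhere: the inner Banach fixed point is Lipschitz only through $Y_\lambda$, and the outer Schauder step needs only continuity. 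Your approach would work once the mollification step is carried out carefully (and once you make precise that the coupled Galerkin/plastic-strain system is a Banach-space-valued ODE, since $\epspl$ is not Galerkin-projected), but it is more laborious; the paper's two-level fixed-point argument is better matched to the standing hypotheses on $f$. One small imprecision in your sketch: testing the heat equation with $1$ or $\theta^\lambda_m$ does not directly give the $L^\infty(0,T;H^1)$ control on $\theta^\lambda$ asserted in the theorem — for that you need the full parabolic regularity of the heat equation with its $L^\infty$-bounded (for fixed $\lambda$) right-hand side, which is what the paper's Lemma \ref{lem:AP2} and the final bootstrap in the proof of Theorem \ref{tw:istn-lambda} supply.
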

To prove Theorem \ref{tw:istn-lambda} we use fixed point theorems.
\subsection{Proof of Theorem \ref{tw:istn-lambda}}
Since we fix $\lambda>0$ we omit the index $\lambda$ when we denote unknown functions. We shall construct the compact operator 
$$
\pis{A}\colon L^r((0,T)\times\Omega;\R)\rightarrow L^r((0,T)\times\Omega;\R) 
$$ 
for some fixed $r\in(1,2).$ For this purpose we fix a function $\ths\in L^r((0,T)\times\Omega;R)$ and we consider the first auxiliary problem on $[0,T]\times\Omega$:
\begin{equation}\label{AP1}
\begin{split}
\dyw  \sigma=&-F-\dyw \D (\ve(u_t))\,,\\
\sigma=&\D(\ve(u)-\epsp)-f\big(\TC_{\frac{1}{\lambda}}(\ths+\tilde{\theta})\big)\id\,,\\[1ex]
\epsp_t=& Y_\lambda (\T)\,,\\[1ex]
\T=&\D(\ve(u)-\epsp)\,,\\[1ex]
u|_{\partial\Omega}=&0\,,\qquad
u|_{t=0}=u_0\,,\qquad
\epsp|_{t=0}=\ve^{p,0}\,.
\end{split}
\end{equation}
\begin{lem}Assume  $\ths\in L^r((0,T)\times\Omega;\R)$. Then there exists the unique solution $(\sigma, \epsp, u)$ to \eqref{AP1} satysfying $\sigma\in L^2((0,T)\times\Omega;\S))$ and  $\epsp\in H^1(0,T;L^{2}(\Omega;\S))$ while $u\in H^1(0,T;\HoR)$. Additionally the following estimate holds
\begin{equation}
\begin{split}
\norm{\sigma&}_{L^\infty(0,T;L^ {2}(\Omega))}+ \norm{\epsp}_{H^1(0,T;L^ {2}(\Omega))}+ \norm{u}_{H^1(0,T;\Ho)}\\[1ex]
\leq & E(T)\left(\norm{f\big(\TC_{\frac{1}{\lambda}}(\ths+\tilde{\theta}))}_{L^2((0,T)\times\Omega)} +\norm{\ve^{p,0}}_{L^2(\Omega)}+ \norm{u_0}_{\Ho} +\norm{F}_{L^2((0,T)\times\Omega)}\right).
\end{split}
\label{eq:ap1a}
\end{equation}
\label{lem:ap1}
\end{lem}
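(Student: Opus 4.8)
The plan is to eliminate the auxiliary unknowns $\sigma$ and $\T$, rewrite \eqref{AP1} as an ordinary differential equation with globally Lipschitz right-hand side on a product Hilbert space, apply the Picard--Lindel\"of theorem, and deduce \eqref{eq:ap1a} from Gronwall's inequality.

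First, since $\TC_{\frac{1}{\lambda}}$ takes values in $[-\tfrac{1}{\lambda},\tfrac{1}{\lambda}]$ and $f$ is continuous, the forcing $g:=f\big(\TC_{\frac{1}{\lambda}}(\ths+\tht)\big)$ lies in $L^\infty((0,T)\times\Omega)\subset L^2((0,T)\times\Omega)$ and is henceforth a fixed datum. Inserting $\T=\D(\ve(u)-\epsp)$ and $\sigma=\T-g\id$ into the balance of momentum and testing against $\varphi\in V:=H^1_0(\Omega;\R^3)$ would give, for a.e. $t$,
\begin{equation*}
\int_\Omega\D\ve(u_t):\ve(\varphi)\,\di x=-\int_\Omega\D\ve(u):\ve(\varphi)\,\di x+\int_\Omega\D\epsp:\ve(\varphi)\,\di x+\int_\Omega g\,\dyw\varphi\,\di x+\int_\Omega F\cdot\varphi\,\di x.
\end{equation*}
Then I would introduce $\mathcal L\colon V\to V'$, $\langle\mathcal L v,\varphi\rangle=\int_\Omega\D\ve(v):\ve(\varphi)\,\di x$, which by Korn's inequality and the Lax--Milgram lemma is an isomorphism, and $\mathcal B\colon H\to V'$ with $H:=L^2(\Omega;\SS)$, $\langle\mathcal B\eta,\varphi\rangle=\int_\Omega\D\eta:\ve(\varphi)\,\di x$. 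Inverting $\mathcal L$, the balance of momentum turns into $u_t=-u+\Phi(\epsp)+\psi(t)$, where $\Phi:=\mathcal L^{-1}\mathcal B\colon H\to V$ is bounded linear and $\psi\in L^2(0,T;V)$ is determined by $F$ and $g$, with $\|\psi(t)\|_V\leq C\big(\|F(t)\|_{L^2(\Omega)}+\|g(t)\|_{L^2(\Omega)}\big)$. Coupled with the flow rule $\epsp_t=Y_\lambda(\D(\ve(u)-\epsp))$, the system \eqref{AP1} becomes the ODE $\dot y=G(t,y)$ for $y=(u,\epsp)$ on $X:=V\times H$, with $G(t,(u,\epsp))=\big(-u+\Phi(\epsp)+\psi(t),\,Y_\lambda(\D(\ve(u)-\epsp))\big)$ and $y(0)=(u_0,\ve^{p,0})$.

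Next I would check that $G$ is a Carath\'eodory map that is globally Lipschitz in $y$, uniformly in $t$: the first component is affine with bounded linear part, while $Y_\lambda=\tfrac{1}{2\lambda}(\mathrm{id}-\Pi_{B_k})\circ P$ is $\tfrac{1}{2\lambda}$-Lipschitz from $\S$ to $\SS$ (the deviatoric projection $P\colon\S\to\SS$ and $\mathrm{id}-\Pi_{B_k}$ on $\SS$, $B_k=\{\eta\in\SS:|\eta|\leq k\}$, each being $1$-Lipschitz), so that $\|Y_\lambda(\D(\ve(u_1)-\epsp_1))-Y_\lambda(\D(\ve(u_2)-\epsp_2))\|_H\leq\tfrac{\|\D\|}{2\lambda}\big(\|u_1-u_2\|_V+\|\epsp_1-\epsp_2\|_H\big)$; moreover $G(\cdot,0)=(\psi(\cdot),0)\in L^2(0,T;X)$ since $Y_\lambda(0)=0$. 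By the Picard--Lindel\"of theorem for ordinary differential equations in a Banach space (a contraction of $y\mapsto y_0+\int_0^{\cdot}G(s,y(s))\,\di s$ on $C([0,T];X)$ in a Bielecki-weighted norm) there is then a unique $y=(u,\epsp)\in C([0,T];X)$, and from $\|\dot y(t)\|_X\leq\|\psi(t)\|_V+L_\lambda\|y(t)\|_X\in L^2(0,T)$ one obtains $y\in H^1(0,T;X)$, i.e. $u\in H^1(0,T;H^1_0(\Omega;\R^3))$ and $\epsp\in H^1(0,T;L^2(\Omega;\S))$. Setting $\T:=\D(\ve(u)-\epsp)\in C([0,T];L^2(\Omega;\S))$ and $\sigma:=\T-g\id$ gives $\sigma\in L^\infty(0,T;L^2(\Omega;\S))\subset L^2((0,T)\times\Omega;\S)$, and all equations of \eqref{AP1} hold (the balance of momentum weakly for a.e. $t$, the remaining ones a.e.); uniqueness in the stated class is immediate, since any such solution satisfies the same ODE.

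Finally, for \eqref{eq:ap1a}, Gronwall's inequality applied to $\|y(t)\|_X\leq\|y_0\|_X+\int_0^t\big(L_\lambda\|y(s)\|_X+\|\psi(s)\|_V\big)\,\di s$ yields $\|u\|_{C([0,T];V)}+\|\epsp\|_{C([0,T];H)}\leq Ce^{L_\lambda T}\big(\|u_0\|_{\Ho}+\|\ve^{p,0}\|_{L^2(\Omega)}+\|F\|_{L^2((0,T)\times\Omega)}+\|g\|_{L^2((0,T)\times\Omega)}\big)$, after which the bounds on $u_t$, $\epsp_t$, $\T$ and $\sigma=\T-g\id$ follow by reading them off the right-hand sides and using the boundedness of $f$ on $[-\tfrac{1}{\lambda},\tfrac{1}{\lambda}]$; collecting them gives \eqref{eq:ap1a} with a constant $E(T)=E(T,\lambda,\Omega,\D,k)$. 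I expect the only delicate point to be this first reduction: correctly inverting the elliptic operator acting on $u_t$ in the balance of momentum so that it is absorbed into a bounded linear term, and verifying that the Lipschitz constant of $G$ is uniform in $t$ in the product norm of $X$; the rest is routine at fixed $\lambda$. Since $L_\lambda$, hence $E(T)$, degenerates as $\lambda\to0$, the $\lambda$-uniform estimates are deferred to the next section; alternatively, their $\lambda$-independent part may be recovered by the energy method, testing the balance of momentum with $u_t$ and using $\int_\Omega\T:\epsp_t\,\di x=\int_\Omega Y_\lambda(\T):\T\,\di x\geq0$ together with $\int_\Omega\T:\tfrac{\di}{\di t}(\ve(u)-\epsp)\,\di x=\tfrac{1}{2}\tfrac{\di}{\di t}\|\D^{1/2}(\ve(u)-\epsp)\|_{L^2(\Omega)}^2$.
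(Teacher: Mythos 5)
Your proof is correct, and it takes a somewhat cleaner route than the paper's. The paper proceeds by a nested Banach fixed point: for a frozen $\epsps$ it first solves the damped linear elasticity problem $-\dyw\D\ve(w)-\dyw\D\ve(w_t)=\cdots$ (this is the appendix Lemma \ref{lem:LE}, itself a short-time contraction plus continuation), then iterates on $\epsps\mapsto\epsp$ via the Volterra operator $\mathcal R(\epsps)=\epsp_0+\int_0^\cdot Y_\lambda(\D(\ve(w)-\epsps))$, showing it is a contraction for small $T$, and finally reads off the estimate by Gronwall. You instead observe that, because of the viscous damping, the balance of momentum can be inverted outright for $u_t$: with $\mathcal L^{-1}\colon V'\to V$ from Lax--Milgram/Korn, the system collapses to a single first-order ODE $\dot y=G(t,y)$ on $X=H^1_0(\Omega;\R^3)\times L^2(\Omega;\SS)$ whose right-hand side is globally Lipschitz because $Y_\lambda=\tfrac{1}{2\lambda}(\mathrm{id}-\Pi_{B_k})\circ P$ is $\tfrac{1}{2\lambda}$-Lipschitz. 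This folds the paper's two fixed-point layers into one Picard--Lindel\"of argument and makes the $\lambda$-dependence of the Lipschitz constant (and hence of $E(T)$) fully explicit, at the price of working in the abstract ODE-in-Banach-space framework rather than with concrete PDE sub-problems. Both proofs hinge on exactly the same mechanism (Lipschitzness of the Yosida regularisation), and both deliver the claimed regularity and the Gronwall-type estimate \eqref{eq:ap1a}, so the choice is a matter of packaging; your observation that the damping term makes $\mathcal L$ invertible on $u_t$, rather than treating it via a separate auxiliary lemma, is a genuine simplification.
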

\begin{proof} We apply the Banach fixed point theorem. Let $\epsps\in L^2(0,T;L^2(\Omega;\S))$ and using facts proven in Appendix we solve the linear elasticity problem in the form:
\begin{equation}
\begin{split}    
-\dyw\D(\ve(w))-\dyw\D(\ve(w_t))&= -\dyw\D(\epsps)-\dyw f\big(\TC_{\frac{1}{\lambda}}(\ths+\tilde{\theta})\big)\id+F\,,\\[1ex]
w|_{\partial\Omega}=0\,,\quad&\quad w|_{t=0}=u_0\,.
\end{split}
\label{eq:elast}
\end{equation}
Using Lemma \ref{lem:LE} we obtain that there exists a unique solution $w\in H^1(0,T;\HoR)$. Additionally $w$ satisfies the following estimate
\begin{equation*}
\begin{split}
\norm{w}_{H^1(0,T;\Ho)}\leq& CT\left(e^{CT}+1\right)\Big(\norm{\epsps}_{L^2((0,T)\times\Omega)}+ \norm{f\big(\TC_{\frac{1}{\lambda}}(\ths+\tilde{\theta})\big)}_{L^2((0,T)\times\Omega)}\\[1ex]
&\phantom{ CT\left(e^{CT}+1\right)\Big(} +\norm{u_0}_{\Ho}
+ \norm{F}_{L^2((0,T)\times\Omega)}\Big),\\
%
%
\leq& CT\left(e^{CT}+1\right)\Big(\norm{\epsps}_{L^2((0,T)\times\Omega)}+ 1+\norm{u_0}_{\Ho}
+ \norm{F}_{L^2((0,T)\times\Omega)}\Big),
\end{split}
\end{equation*}
where the constant $C>0$ depends only on the operator $\D$, the constant $\lambda$ and the shape of the set $\Omega$. We use the fact $f$ is continuous, thus $\norm{f\big(\TC_{\frac{1}{\lambda}}(\ths+\tilde{\theta})\big)}_{L^\infty(\Omega)}\leq\max_{|\xi|\leq1/\lambda}|f(\xi)|$. We denote $D(T):=CT\left(e^{CT}+1\right)$ and put 
$$
\sigma=\D(\ve(w)-\epsps)+\D(\ve(w_t))-f\big(\TC_{\frac{1}{\lambda}}(\ths+\tilde{\theta})\big)\id.
$$
Therefore
\begin{equation*}
\begin{split}
\norm{\sigma}_{L^2((0,T)\times\Omega)}\leq& C\left(\norm{w}_{H^1(0,T;\Ho)}+ \norm{\epsps}_{L^2((0,T)\times\Omega)}+\norm{f\big(\TC_{\frac{1}{\lambda}}(\ths+\tilde{\theta})\big)}_{L^2((0,T)\times\Omega)}\right)\\
\leq& D(T)\Big(\norm{\epsps}_{L^2((0,T)\times\Omega)}+ 1 +\norm{u_0}_{\Ho}+ \norm{F}_{L^2((0,T)\times\Omega)}\Big).
\end{split}
\end{equation*}
Now we define the following operator 
$$
\epsp(x,t)={\cal R}(\epsps)(x,t):=\int\limits _0^t Y_\lambda(\D(\eps(u(x,\tau))-\epsp(x,\tau))\,\di \tau   + \epsp_0(x).
$$ 
Using $Y_\lambda(0)=0$ and Lipschitz-continuity of $Y_\lambda$, we obtain the following estimate
\begin{equation*}
\begin{split}
|\epsp(x,t)|\leq&\int\limits _0^t\left|Y_\lambda(\D(\eps(u(x,\tau))-\epsp(x,\tau))\right|\di \tau     +|\ve^p_0(x)|\\[1ex]
\leq&\frac{1}{2\lambda}\int\limits _0^t\left|\D(\eps(u(x,\tau))-\epsp(x,\tau)\right|\di \tau    +  
|\ve^p_0(x)|.
\end{split}
\end{equation*}
The norm of $\epsp$ is estimated in the following way
$$
\norm{\epsp}_{L^2((0,T)\times\Omega)}\leq \frac{CT}{\lambda}\norm{\D(\eps(u)-\epsp}_{L^2((0,T)\times\Omega)}+ \norm{\ve^p_0}_{L^2(\Omega)}.
$$
Observe that the operator ${\cal R}$ is proper defined. We choose a sufficiently short time interval to have that the operator $\pis{R}$ be a contraction. Indeed, let us fix $\epsps_1,\epsps_2\in L^2((0,T)\times\Omega);\S)$. By solving \eqref{eq:elast} we obtain $w_1, w_2$ respectively. Next, using reasoning similar to the previous one, we have
\begin{align*}
\epsp_1(x,t)&=\int\limits _0^t Y_\lambda(\T_1(x,\tau))\di \tau    +\ve^p_0(x),\\
\epsp_2(x,t)&=\int\limits _0^t Y_\lambda(\T_2(x,\tau))\di \tau    +\ve^p_0(x),
\end{align*}
and $\T_i=\D(\ve(w_i)-\epsps_i)$ for $i=1,2$.
We subtract $\epsp_1-\epsp_2$ to obtain
\begin{equation*}
\begin{split}\norm{\epsp_1(t)-\epsp_2(t)}_{L^2(\Omega)}&\leq\int\limits _0^t\norm{Y_\lambda(\T_1(\tau))-Y_\lambda(\T_2(\tau))}_{L^2(\Omega)}\di \tau   \\
\end{split}
\end{equation*}
hence
$$
\norm{\epsp_1-\epsp_2}_{L^2((0,T)\times\Omega)}\leq \frac{T}{2\lambda}\norm{\T_1-\T_2}_{L^2((0,T)\times\Omega)}.
$$
As a consequence of the linearity of the problem \eqref{eq:elast} we obtain that a difference $w_1-w_2$ satisfies
\begin{equation*}
\begin{split}    
-\dyw\D(\ve(w_1-w_2))-\dyw\D(\ve((w_1-w_2)_t))&= -\dyw\D(\epsp_1-\epsp_2)\,,\\[1ex]
(w_1-w_2)|_{\partial\Omega}&=0\,,\\[1ex]
(w_1-w_2)|_{t=0}&=0\,.
\end{split}
\end{equation*}
The following estimate holds
$$
\norm{w_1-w_2}_{H^1(0,T; \Ho )}\leq D(T)\norm{\epsps_1-\epsps_2}_{L^2((0,T)\times\Omega)},
$$
where the constant $D(T)$ is the same as previously. Hence
\begin{align*}
\norm{\T_1-\T_2}_{L^2((0,T)\times\Omega)}
&\leq C\left(\norm{w_1-w_2}_{L^2(0,T; \Ho )}+ \norm{\epsps_1-\epsps_2}_{L^2((0,T)\times\Omega)}\right)\\
&\leq CD(T)\norm{\epsps_1-\epsps_2}_{L^2((0,T)\times\Omega)}.
\end{align*}
Now we estimate the difference $\pis{R}(\epsps_1)-\pis{R}(\epsps_2)$ as follows
\begin{equation}
\norm{\pis{R}(\epsps_1)-\pis{R}(\epsps_2)}_{L^2((0,T)\times\Omega)}= \norm{\epsp_1-\epsp_2}_{L^2((0,T)\times\Omega)} \leq CTD(T)\norm{\epsps_1-\epsps_2}_{L^2((0,T)\times\Omega)},
\label{eq:contr1}
\end{equation}
where the constant $D(T)$ is the same as previously and does not depend on initial data.  Therefore we are able to choose a sufficiently small $T_1>0$ to get that $\pis{R}_1$ is a contraction on $L^2((0,T_1)\times\Omega;\S)$. Thus using the Banach fixed point theorem we obtain existence of $\epsp\in L^2((0,T_1)\times\Omega;\S)$ such that $\pis{R}_1(\epsp)=\epsp$. Hence the problem (AP1) possesses the unique solution $(\sigma, \epsp,u)\in L^2((0,T_1)\times\Omega;\S)\times L^2((0,T_1)\times\Omega;\S)\times H^1(0,T_1;\HoR)$. We repeat the argumentation analogous to the proof of Lemma \ref{lem:LE} to extend obtained solution to the whole interval $(0,T)$ since the estimate \eqref{eq:contr1} is obtained independently of the initial data.

Finally we prove the estimate \eqref{eq:ap1a}. Thus using the second equation in \eqref{AP1} we obtain the following inequalities
\begin{align*}
\norm{\sigma(t)}_{L^2(\Omega)}
& \leq  C\left(\norm{u(t)}_{\Ho}+ \norm{\epsp(t)}_{L^2(\Omega)}+\norm{f\big(\TC_{\frac{1}{\lambda}}(\ths(t)+\tilde{\theta}(t))\big)}_{L^2(\Omega)}\right)\\[1ex]
& \leq  C(T)\left(\norm{\epsp(t)}_{L^2(\Omega)}+ \norm{f\big(\TC_{\frac{1}{\lambda}}(\ths(t)+\tilde{\theta}(t))\big)}_{L^2(\Omega)} + \norm{u_0}_{\Ho}\right.\\[1ex]
&\left.\phantom{\leq  C(T)\Big(}\qquad\qquad+  \norm{F(t)}_{L^2(\Omega)}\right)\\[1ex]
&\leq  C(T)\left(\frac{1}{\lambda}\int\limits _0^t\norm{\T(\tau)}_{L^2(\Omega)}\di\tau+\norm{f\big(\TC_{\frac{1}{\lambda}}(\ths(t)+\tilde{\theta}(t))\big)}_{L^2(\Omega)}+ \norm{u_0}_{\Ho}\right.\\[1ex]
&\left.\phantom{\leq C(T)\Big(\frac{1}{\lambda}\int\limits _0^t\norm{\T(\tau)}_{L^2(\Omega)}+1}+ \norm{\epsp_0}_{L^2(\Omega)} + \norm{F(t)}_{L^2(\Omega)}\right).\nonumber\\[1ex]
&\leq  C(T)\left(\frac{1}{\lambda}\int\limits _0^t\norm{\sigma(\tau)}_{L^2(\Omega)}+\norm{f\big(\TC_{\frac{1}{\lambda}}(\ths(\tau)+\tilde{\theta}(\tau))\big)}_{L^2(\Omega)}\,\di\tau\right.\\[1ex]
&\phantom{\leq C(T)\Big(\frac{1}{\lambda}}+\norm{f\big(\TC_{\frac{1}{\lambda}}(\ths(t)+\tilde{\theta}(t))\big)}_{L^2(\Omega)}+ \norm{u_0}_{\Ho}\\[1ex]
&\left.\phantom{\leq C(T)\Big(\frac{1}{\lambda}\int\limits _0^t\norm{\sigma(\tau)}_{L^2(\Omega)}+1}+ \norm{\epsp_0}_{L^2(\Omega)} + \norm{F(t)}_{L^2(\Omega)}\right).\nonumber
\end{align*}
We apply the Gronwall inequality to the expression above and obtain
\begin{equation}\label{eq:sig1}
\begin{split}
\norm{\sigma(t)}_{L^2(\Omega)} \leq& C(T)\left(\norm{\epsp_0}_{L^2((\Omega)}+ \norm{f\big(\TC_{\frac{1}{\lambda}}(\ths+\tilde{\theta})\big)}_{L^2((0,T)\times\Omega)}\right.\\[1ex]
&\phantom{C(T)\Bigg(\norm{\epsp_0}}\left.+ \norm{u_0}_{\Ho}+ \norm{F}_{L^2((0,T)\times\Omega)}\right)
\end{split}
\end{equation}
and we deduce the required estimate for $\norm{\sigma}_{L^2((0,T)\times\Omega)}$ is completed. Moreover we observe that 
\begin{equation}\label{eq:epsp1}
\begin{split}
\norm{\epsp_t}_{L^2((0,T)\times\Omega)}\leq& \frac{C(T)}{\lambda}\norm{\sigma}_{L^2((0,T)\times\Omega)}\\[1ex]
\leq&\frac{C(T)}{\lambda}\left(\norm{\epsp_0}_{L^2((\Omega)}+ \norm{f\big(\TC_{\frac{1}{\lambda}}(\ths+\tilde{\theta})\big)}_{L^2((0,T)\times\Omega)}\right.\\[1ex]
&\phantom{C(T)\Bigg(}\left.+ \norm{u_0}_{\Ho}+ \norm{F}_{L^2((0,T)\times\Omega)}\right)
\end{split}
\end{equation}
and we get an estimate of a norm $\norm{\epsp_t}_{L^2((0,T)\times\Omega)}$. To estimate a term $\norm{u}_{H^1(0,T;\Ho)}$ we once again use the Lemma \ref{lem:LE} as in problem \eqref{eq:elast}. Therefore
\begin{equation}
\begin{split}
\norm{u}_{H^1(0,T;\Ho)}\leq& D(T)\Big(\norm{\epsp}_{L^2((0,T)\times\Omega)}+  \norm{f\big(\TC_{\frac{1}{\lambda}}(\ths+\tilde{\theta}))}_{L^2((0,T)\times\Omega)}\\[1ex]
&\phantom{D(T)\Big(}+ \norm{F}_{L^2((0,T)\times\Omega)}+ \norm{u_0}_{\Ho}\Big).
\end{split}\label{eq:d tu}
\end{equation}
The inequalities \eqref{eq:sig1} together with \eqref{eq:epsp1} and \eqref{eq:d tu} give us \eqref{eq:ap1a}.
\end{proof}
%
%
Next, we solve the second auxiliary problem on $(0,T)\times\Omega$ {\it i.e.}
\begin{equation}\label{AP2}
\begin{split}
\theta_t-\Delta\theta=&-f\left(\TC_{\frac{1}{\lambda}}(\ths+\tilde{\theta})\right)\dyw u_t+\TC_{\frac{1}{\lambda}}\big(\epsp_t:\T\big),\\[1ex]
\frac{\partial\,\theta}{\partial\,n}{|_{\partial\Omega}}=&0, \qquad \theta{|_{t=0}}=\theta_{0},
\end{split}
\end{equation}
where $\ths\in L^r((0,T)\times\Omega;\R)$ was fixed before, while $(u,\T, \epsp)$ are solution of the problem~\eqref{AP1}.
\begin{lem} Let $\theta_0\in W^{2-2/r,r}(\Omega;\R)$, $RHS\in L^r((0,T)\times\Omega;\R)$, where
$$
RHS=-f\left(\TC_{\frac{1}{\lambda}}(\ths+\tilde{\theta})\right)\dyw u_t+\TC_{\frac{1}{\lambda}}\big(\epsp_t:\T\big).
$$
Then there exists a solution of problem \eqref{AP2} $\theta$, unique in the class
$$
\theta\in L^r(0,T;W^{2,r}(\Omega;\R)),\quad\theta_t\in L^r((0,T)\times\Omega;\R),\quad\theta\in C([0,T];W^{2-2/r,r}(\Omega;\R)).
$$
Moreover, there exists constant $C>0$ independent of the given data such that
\begin{equation}\label{eq:ap2a}
\begin{split}
\norm{\theta}_{L^\infty((0,T);W^{2-2/r,r}(\Omega))}+ \norm{\theta_t}_{L^r((0,T)\times\Omega)}+ \norm{\Delta\theta}_{L^r((0,T)\times\Omega)}\\
\leq C\left(\norm{RHS}_{L^r((0,T)\times\Omega)}+\norm{\theta_0}_{W^{2,r}(\Omega)}\right).
\end{split}
\end{equation}
\label{lem:AP2}
\end{lem}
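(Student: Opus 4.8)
\emph{Plan.} Lemma~\ref{lem:AP2} is an instance of maximal $L^r$-regularity for the heat operator with homogeneous Neumann boundary condition on the $C^2$-domain $\Omega$, so I would not reprove it from scratch but reduce it to classical linear parabolic theory (Ladyzhenskaya--Solonnikov--Ural'tseva, Amann, Denk--Hieber--Pr\"uss). Write $\theta=v+w$, where $v$ solves the homogeneous problem $v_t-\Delta v=0$, $\partial_n v|_{\partial\Omega}=0$, $v(0)=\theta_0$, and $w$ solves $w_t-\Delta w=RHS$, $\partial_n w|_{\partial\Omega}=0$, $w(0)=0$. It then suffices to treat the two pieces separately and add the estimates.

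\emph{The homogeneous piece.} The Neumann Laplacian $\Delta_N$, with domain $\{v\in W^{2,r}(\Omega):\partial_n v|_{\partial\Omega}=0\}$ (on which the graph norm is equivalent to the $W^{2,r}(\Omega)$-norm by elliptic regularity on a $C^2$-domain), generates a bounded analytic semigroup $(e^{t\Delta_N})_{t\ge0}$ on $L^r(\Omega)$ for every $r\in(1,\infty)$, and $v(t)=e^{t\Delta_N}\theta_0$. Since $r<2$ we have $2-2/r\in(0,1)$, so the real interpolation space $(L^r(\Omega),D(\Delta_N))_{1-1/r,r}$ equals $W^{2-2/r,r}(\Omega)$ with no boundary condition attached; hence $\theta_0$ belongs to the trace space of the maximal-regularity class and standard analytic-semigroup estimates give $v\in C([0,T];W^{2-2/r,r}(\Omega))$ with $v_t,\Delta v\in L^r((0,T)\times\Omega)$ and $\|v_t\|_{L^r}+\|\Delta v\|_{L^r}+\|v\|_{C([0,T];W^{2-2/r,r})}\le C\,\|\theta_0\|_{W^{2-2/r,r}(\Omega)}$, $C=C(r,\Omega,T)$. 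Because $W^{2,r}(\Omega)\hookrightarrow W^{2-2/r,r}(\Omega)$, this in particular controls $\theta_0$ by the norm written in \eqref{eq:ap2a}.

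\emph{The inhomogeneous piece.} From the variation-of-constants formula $w(t)=\int_0^t e^{(t-s)\Delta_N}RHS(s)\,\di s$, maximal $L^r$-regularity of $\Delta_N$ (valid since $L^r(\Omega)$ is a UMD space and the semigroup is analytic, or directly from Solonnikov's $L^r$-estimates, the Neumann condition satisfying the complementing condition) yields $w_t,\Delta w\in L^r((0,T)\times\Omega)$ with $\|w_t\|_{L^r}+\|\Delta w\|_{L^r}\le C\,\|RHS\|_{L^r}$, $C=C(r,\Omega,T)$; combined with elliptic regularity this also gives $w\in L^r(0,T;W^{2,r}(\Omega))$. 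The mixed-derivative embedding $W^{1,r}(0,T;L^r(\Omega))\cap L^r(0,T;W^{2,r}(\Omega))\hookrightarrow C([0,T];(L^r,W^{2,r})_{1-1/r,r})=C([0,T];W^{2-2/r,r}(\Omega))$ then gives $w\in C([0,T];W^{2-2/r,r}(\Omega))$. Setting $\theta=v+w$ produces a solution of \eqref{AP2} in the stated class, and adding the two estimates yields \eqref{eq:ap2a}. Uniqueness is immediate: the difference of two solutions solves \eqref{AP2} with $RHS\equiv0$ and zero initial datum, so the $L^r$-theory (or simply testing the equation for the difference against a suitable truncation of itself) forces it to vanish; independence of the constant from the data follows from linearity.

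\emph{Main obstacle.} There is essentially no genuine obstacle here --- the lemma is a black-box application of parabolic $L^r$-regularity, and the delicate coupling of the model plays no role at this stage. The only points requiring a little care are: (i) identifying the trace/interpolation space exactly as $W^{2-2/r,r}(\Omega)$, which is clean precisely because $r<2$, so that no compatibility relation between $\theta_0$ and the Neumann condition is needed; and (ii) taking the constant in \eqref{eq:ap2a} uniform on the fixed interval $[0,T]$, which follows by extending $RHS$ by $0$ for $t>T$ (or a standard rescaling). The $C^2$-regularity of $\partial\Omega$ assumed for the body is exactly what underpins the required $L^r$-resolvent and elliptic estimates.
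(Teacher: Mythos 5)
Your proposal is correct, and it is entirely consistent with the paper's treatment of this lemma: the paper gives no proof at all, but instead follows the lemma with a remark stating that it is a standard result from parabolic theory and citing Amann's works (which are precisely the analytic-semigroup / maximal $L^r$-regularity references you invoke). Your write-up simply unpacks that citation into the usual $v+w$ splitting, identification of the trace space $(L^r,W^{2,r}_N)_{1-1/r,r}=W^{2-2/r,r}(\Omega)$ (with no boundary compatibility needed since $2-2/r<1$), and the maximal-regularity bound for the forced part. One small point worth noting: your argument actually yields the bound with $\|\theta_0\|_{W^{2-2/r,r}(\Omega)}$ on the right, which is both sharper than the $\|\theta_0\|_{W^{2,r}(\Omega)}$ written in \eqref{eq:ap2a} and better matched to the hypothesis $\theta_0\in W^{2-2/r,r}(\Omega)$; the paper's stated right-hand side appears to carry a superfluous $W^{2,r}$ that would not even be finite under the stated hypothesis, so the trace-space norm you obtain is in fact the one the subsequent arguments need (they apply the lemma with $\theta_0\in H^1(\Omega)\hookrightarrow W^{2-2/r,r}(\Omega)$ for $r\in(1,2)$).
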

\begin{uwa} The Lemma \ref{lem:AP2} is the standard result from theory of the parabolic equations. See for example \cite{Amann93,Amann95}.
\end{uwa}
\begin{uwa}
According to assumption given in current paper we have for a bounded domain $\Omega$ with the boundary of the class $C^2$:
\begin{equation*}
\begin{split}
RHS\in& L^2((0,T)\times\Omega;\R)\subset L^r((0,T)\times\Omega;\R),\\
\theta_0\in& H^1(\Omega;\R)\subset W^{2-2/r,r}(\Omega;\R)
\end{split}
\end{equation*}
for any $r\in(1,2)$. 
\end{uwa}
The considerations above allows us to define operator 
$$
\pis{A}\colon L^r((0,T)\times\Omega;\R)\to L^r((0,T)\times\Omega;\R)
$$
{\it i.e.} first we fix $\ths\in L^r((0,T)\times\Omega;\R)$, then we solve the problem \eqref{AP1} and next we solve the problem \eqref{AP2} using as a given data the solution of \eqref{AP1}. Finally we put $\pis{A}(\ths):=\theta$, where $\theta\in L^r((0,T)\times\Omega;\R)$ is the solution of \eqref{AP2} obtained through the procedure described above.
%
%
\begin{lem} The operator  
$$
\pis{A}\colon L^r((0,T)\times\Omega;\R)\to L^r(0,T;W^{2,r}(\Omega;\R)\cap W^{1,r}(0,T;L^r(\Omega);\R)
$$
is continuous for any $r\in(1,2)$.
\label{lem:A-cont}
\end{lem}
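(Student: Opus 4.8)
The plan is to deduce the continuity of $\pis{A}$ from that of the two solution maps composing it, the guiding observation being that, since $\lambda>0$ is held \emph{fixed}, both the truncation $\TC_{\frac{1}{\lambda}}$ and the composition $f\circ\TC_{\frac{1}{\lambda}}$ take values in a bounded set, so that every product which would a priori be merely integrable can in fact be controlled in $L^r$. Concretely, fix $r\in(1,2)$, take $\ths_n\to\ths$ in $L^r((0,T)\times\Omega;\R)$, and abbreviate $g_n:=f\big(\TC_{\frac{1}{\lambda}}(\ths_n+\tilde{\theta})\big)$, $g:=f\big(\TC_{\frac{1}{\lambda}}(\ths+\tilde{\theta})\big)$; let $(u_n,\epsp_n,\sigma_n)$, $(u,\epsp,\sigma)$ be the solutions of \eqref{AP1} associated with $\ths_n$, $\ths$ through Lemma~\ref{lem:ap1}, and put $\theta_n:=\pis{A}(\ths_n)$, $\theta:=\pis{A}(\ths)$, the corresponding solutions of \eqref{AP2}. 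The aim is $\theta_n\to\theta$ in $L^r(0,T;W^{2,r}(\Omega;\R))\cap W^{1,r}(0,T;L^r(\Omega;\R))$.

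First I would record that $g_n\to g$ in every $L^p((0,T)\times\Omega;\R)$ with $p<\infty$: this is the continuity of the Nemytskii operator attached to the bounded Carath\'eodory function $(x,t,s)\mapsto f\big(\TC_{\frac{1}{\lambda}}(s+\tilde{\theta}(x,t))\big)$ --- equivalently, every subsequence of $(\ths_n)$ has a further subsequence converging a.e.\ in $(0,T)\times\Omega$, whence $g_n\to g$ a.e.\ along it, and the uniform bound $|g_n|\le M_\lambda:=\max_{|s|\le 1/\lambda}|f(s)|$ lets dominated convergence finish the argument. Next I would treat the solution map of \eqref{AP1}. The difference $u_n-u$ satisfies the linear elasticity equation from the proof of Lemma~\ref{lem:ap1} (cf.\ \eqref{eq:elast}) with right-hand side built from $\epsp_n-\epsp$ and $g_n-g$, coupled with $\epsp_n-\epsp=\int_0^{\cdot}\big(Y_\lambda(\T_n)-Y_\lambda(\T)\big)\,\di\tau$ and the Lipschitz bound $|Y_\lambda(\T_n)-Y_\lambda(\T)|\le\frac{1}{2\lambda}|\T_n-\T|$, where $\T_i=\D(\ve(u_i)-\epsp_i)$. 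Running the Gronwall argument that produced \eqref{eq:ap1a} in Lemma~\ref{lem:ap1}, now applied to this difference (with constants allowed to depend on $\lambda$), yields
\[
\begin{aligned}
&\norm{u_n-u}_{H^1(0,T;\Ho)}+\norm{\epsp_n-\epsp}_{H^1(0,T;L^2(\Omega))}+\norm{\sigma_n-\sigma}_{L^2((0,T)\times\Omega)}\\
&\qquad\le C(T,\lambda)\,\norm{g_n-g}_{L^2((0,T)\times\Omega)},
\end{aligned}
\]
whose right-hand side tends to $0$. In particular $\dyw(u_n)_t\to\dyw u_t$, $\epsp_n\to\epsp$, $(\epsp_n)_t\to\epsp_t$ and $\T_n\to\T$, all in $L^2((0,T)\times\Omega)$, with $(\T_n)$ bounded there.

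The key step is then $RHS_n\to RHS$ in $L^r((0,T)\times\Omega;\R)$, where $RHS_n=-g_n\,\dyw(u_n)_t+\TC_{\frac{1}{\lambda}}\big((\epsp_n)_t:\T_n\big)$. For the thermal source I would split $g_n\,\dyw(u_n)_t-g\,\dyw u_t=g_n\big(\dyw(u_n)_t-\dyw u_t\big)+(g_n-g)\,\dyw u_t$: the first summand tends to $0$ in $L^r$ since $\norm{g_n}_{L^\infty}\le M_\lambda$ and $\dyw(u_n)_t\to\dyw u_t$ in $L^2\hookrightarrow L^r$, and the second tends to $0$ in $L^r$ by H\"older, since $g_n-g\to 0$ in $L^{2r/(2-r)}$ while $\dyw u_t\in L^2$. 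For the dissipation term, the Lipschitz bound $|\TC_{\frac{1}{\lambda}}(a)-\TC_{\frac{1}{\lambda}}(b)|\le\min\{|a-b|,2/\lambda\}$ gives
\[
\begin{aligned}
&\int_0^T\!\!\int_\Omega\big|\TC_{\frac{1}{\lambda}}\big((\epsp_n)_t:\T_n\big)-\TC_{\frac{1}{\lambda}}\big(\epsp_t:\T\big)\big|^{r}\,\di x\,\di t\\
&\qquad\le(2/\lambda)^{r-1}\int_0^T\!\!\int_\Omega\big|(\epsp_n)_t:\T_n-\epsp_t:\T\big|\,\di x\,\di t,
\end{aligned}
\]
and the right-hand side tends to $0$ because $(\epsp_n)_t:\T_n-\epsp_t:\T=\big((\epsp_n)_t-\epsp_t\big):\T_n+\epsp_t:(\T_n-\T)\to 0$ in $L^1((0,T)\times\Omega)$, by Cauchy--Schwarz together with the convergences and bounds just established. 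Hence $RHS_n\to RHS$ in $L^r$. Finally, as \eqref{AP2} is linear in $(RHS,\theta_0)$ and $\theta_0$ is the same for every $n$, the difference $\theta_n-\theta$ solves \eqref{AP2} with datum $RHS_n-RHS$ and zero initial condition, so the a priori estimate \eqref{eq:ap2a} of Lemma~\ref{lem:AP2} gives
\[
\begin{aligned}
\norm{\theta_n-\theta}_{L^\infty(0,T;W^{2-2/r,r}(\Omega))}&+\norm{(\theta_n)_t-\theta_t}_{L^r((0,T)\times\Omega)}+\norm{\Delta(\theta_n-\theta)}_{L^r((0,T)\times\Omega)}\\
&\le C\,\norm{RHS_n-RHS}_{L^r((0,T)\times\Omega)}\to 0;
\end{aligned}
\]
combining the control of $\norm{\Delta(\theta_n-\theta)}_{L^r}$ with the lower-order term through $L^r$-elliptic regularity for the Neumann Laplacian on $\Omega$ then gives $\theta_n\to\theta$ in $L^r(0,T;W^{2,r}(\Omega;\R))\cap W^{1,r}(0,T;L^r(\Omega;\R))$, which is the assertion.

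I expect the only genuinely delicate point to be the passage $RHS_n\to RHS$: both products $g\,\dyw u_t$ and $\epsp_t:\T$ a priori lie merely in $L^1((0,T)\times\Omega)$, and it is exactly the fixed truncation at height $1/\lambda$ for the dissipation, together with the $L^\infty$-bound on $f\circ\TC_{\frac{1}{\lambda}}$ for the thermal source, that upgrades them to $L^r$-continuity; the rest is the Gronwall estimate for \eqref{AP1} and the $L^r$ parabolic regularity of Lemma~\ref{lem:AP2}. It should be kept in mind that the constants $C(T,\lambda)$ degenerate as $\lambda\to 0$, which is precisely why the $\lambda$-uniform estimates of the next section, and a separate limit passage, will be needed afterwards.
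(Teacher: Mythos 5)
Your argument is correct and follows essentially the same route as the paper: show stability of the solution map of \eqref{AP1} via the Gronwall/Lipschitz estimate, upgrade it to $RHS$-convergence in $L^r$ by splitting the thermal source and using that the truncation maps $L^1$ continuously into $L^r$, then close with the maximal parabolic regularity of Lemma~\ref{lem:AP2}. The only cosmetic differences are that you phrase things with sequences rather than the paper's $\varepsilon$--$\delta$ formulation, and you make the $L^1\!\to\!L^r$ continuity of $\TC_{1/\lambda}$ explicit through the elementary interpolation $|\TC_{1/\lambda}(a)-\TC_{1/\lambda}(b)|^r\le(2/\lambda)^{r-1}|a-b|$, which is a clean way of recording what the paper only asserts.
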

\begin{proof}We proceed analogously to the proof of Proposition 1 \cite{BartOwcz1}. We fix $\ths_1,\ths_2\in L^r((0,T)\times\Omega)$. Our aim is to examine a norm of the difference $\pis{A}(\ths_1)-\pis{A}(\ths_2)$. First we consider the problem \eqref{AP1} for $\ths_1$ and $\ths_2$. Solutions of this problem ($(u_1,\T_1,\epsp_1)$ and $(u_2,\T_2,\epsp_2)$ respectively) satisfy the following system:
\begin{equation}\label{roznica}
\begin{split}
\dyw \T\r+ \dyw\D(\ve(u_{\Delta,t})=& \dyw \left(\left(\TC_{\frac{1}{\lambda}}(\ths_1+\tht)- \TC_{\frac{1}{\lambda}}(\ths_2+\tht)\right)\id\right)\\
\epsp_{\Delta,t}=& Y_{\lambda}(\T_1)-Y_{\lambda}(\T_2)\,,\\
\T\r=& \D(\ve(u\r)-\epsp\r)\,,\\
u\r|_{\partial\Omega}=0,\qquad& u\r|_{t=0}=0,\qquad \epsp\r|_{t=0}=0\,,
\end{split}
\end{equation}
where we use a notation $a_\Delta$ for a difference {\it i.e.} $a_\Delta:=a_1-a_2$. Using similar investigation as in the proof of the Lemma \ref{lem:ap1} we estimate as follows
\begin{equation*}
\norm{\epsp\r}_{L^2((0,T)\times\Omega)}\leqslant \left(\int\limits _0^T \norm{Y_{\lambda}(\T_1(\tau))-Y_{\lambda}(\T_2(\tau))}_{L^2(\Omega)}^2\,\di \tau\right)^{1/2}    \leqslant \frac{T}{2\lambda}\norm{\T\r}_{L^2((0,T)\times\Omega)}.
\end{equation*}
The estimate \eqref{eq:LEestim} from Lemma \ref{lem:LE} yields
\begin{equation}\label{eq:u_r}
\begin{split}
\norm{u\r}&_{H^1(0,T;\Ho)} \\
\leq& C(T)\Big(\norm{f\left(\TC_{\frac{1}{\lambda}}(\ths_1+\tht)\right)-f\left( \TC_{\frac{1}{\lambda}}(\ths_2+\tht)\right)}_{L^2((0,T)\times\Omega)}+\norm{\epsp\r}_{L^2((0,T)\times\Omega)}\Big)\\[1ex]
\leq& C(T)\Big(\norm{f\left(\TC_{\frac{1}{\lambda}}(\ths_1+\tht)\right)-f\left( \TC_{\frac{1}{\lambda}}(\ths_2+\tht)\right)}_{L^2((0,T)\times\Omega)}+\frac{1}{\lambda}\norm{\T\r}_{L^2((0,T)\times\Omega)}\Big).
\end{split}
\end{equation}
Similar argumentation as in the proof of Lemma \ref{lem:ap1} leads us to
\begin{equation}\label{eq:eps+T}
\begin{split}
\norm{\T\r}_{L^2((0,T)\times\Omega)}+ \norm{\epsp\r}&_{H^1(0,T;L^2(\Omega))}\\[1ex]
&\leq C(T)\norm{f\left(\TC_{\frac{1}{\lambda}}(\ths_1+\tht)\right)-f\left( \TC_{\frac{1}{\lambda}}(\ths_2+\tht)\right)}_{L^2((0,T)\times\Omega)}.
\end{split}
\end{equation}
The inequality \eqref{eq:u_r} together with \eqref{eq:eps+T} imply
\begin{equation*}
\begin{split}
\norm{u\r}_{H^1(0,T;\Ho)}+\norm{\T\r}&_{L^2((0,T)\times\Omega)}+ \norm{\epsp\r}_{H^1(0,T;L^2(\Omega))}\\[1ex]
&\leq D(T)\norm{f\left(\TC_{\frac{1}{\lambda}}(\ths_1+\tht)\right)-f\left( \TC_{\frac{1}{\lambda}}(\ths_2+\tht)\right)}_{L^2((0,T)\times\Omega)}.
\end{split}
\end{equation*}
Notice that $f\left(\TC_{\frac{1}{\lambda}}((\cdot)-\tht)\right)$ defines a continuous operator from $L^r((0,T)\times\Omega;\R)$ to $L^2((0,T)\times\Omega;\R)$ (defined as $\theta\mapsto f\left(\TC_{\frac{1}{\lambda}}(\theta-\tht)\right)$, compare with \cite[Theorem 7.19]{Dudleybook}), for any $\epsilon_1>0$ there exists $\delta_1>0$ such that if $\norm{\ths\r}_{L^r((0,T)\times\Omega;\R)}\leq\delta_1$ then
\begin{equation}\label{eq:eps_1}
\norm{u\r}_{H^1(0,T;\Ho)}+\norm{\T\r}_{L^2((0,T)\times\Omega)}+ \norm{\epsp\r}_{H^1(0,T;L^2(\Omega))}
\leq \epsilon_1.
\end{equation}
In the next step we consider the second auxiliary problem \eqref{AP2} with $(u_i,\T_i,\epsp_i)$ being a solution of the first auxiliary problem \eqref{AP2} for $\ths_i$ respectively for $i=1,2$. Therefore a difference of solutions $\theta\r:=\theta_1-\theta_2$ satisfies the following system:
\begin{equation}
\begin{split}
\theta_{\Delta,t}-\Delta\theta\r=&-f\left(\TC_{\frac{1}{\lambda}}(\ths_1+\tilde{\theta})\right)\dyw u_{1,t}+ f\left(\TC_{\frac{1}{\lambda}}(\ths_2+\tilde{\theta})\right)\dyw u_{2,t}\\[1ex]
&+\TC_{\frac{1}{\lambda}}\big(\epsp_{1,t}:\T_1\big)- \TC_{\frac{1}{\lambda}}\big(\epsp_{2,t}:\T_2\big),\\[1ex]
\frac{\partial\,\theta}{\partial\,n}{|_{\partial\Omega}}=0,& \qquad \theta{|_{t=0}}=0.
\end{split}
\end{equation}
The same type of estimates as in Lemma \ref{lem:AP2} give us:
\begin{equation}\label{eq:th_r1}
\begin{split}
\norm{\theta&\r}_{W^{1,r}(0,T;L^r(\Omega))}+ \norm{\theta\r}_{L^r(0,T;W^{2,r}(\Omega))}\\[1ex]
%
%
&\leq C(T)\Big(\norm{-f\left(\TC_{\frac{1}{\lambda}}(\ths_1+\tilde{\theta})\right)\dyw u_{1,t}+ f\left(\TC_{\frac{1}{\lambda}}(\ths_2+\tilde{\theta})\right)\dyw u_{2,t}}_{L^r((0,T)\times\Omega)}\\[1ex]
&\phantom{\leq C(T)\Big( MMMMMMM} + \norm{\TC_{\frac{1}{\lambda}}\big(\epsp_{1,t}:\T_1\big)- \TC_{\frac{1}{\lambda}}\big(\epsp_{2,t}:\T_2\big)}_{L^r((0,T)\times\Omega)} \Big).
\end{split}
\end{equation}
Let us consider the first therm on the right-hand side of \eqref{eq:th_r1}:
\begin{equation}
\begin{split}
\norm{-&f\left(\TC_{\frac{1}{\lambda}}(\ths_1+\tilde{\theta})\right)\dyw u_{1,t}+ f\left(\TC_{\frac{1}{\lambda}}(\ths_2+\tilde{\theta})\right)\dyw u_{2,t}}_{L^r((0,T)\times\Omega)}\\[1ex]
%
%
\leq& \norm{f\left(\TC_{\frac{1}{\lambda}}(\ths_1+\tilde{\theta})\right)\dyw u_{\Delta,t}}_{L^r((0,T)\times\Omega)}\\[1ex]
& + \norm{\left(f\left(\TC_{\frac{1}{\lambda}}(\ths_1+\tilde{\theta})\right)- f\left(\TC_{\frac{1}{\lambda}}(\ths_2+\tilde{\theta})\right)\right)\dyw u_{2,t}}_{L^r((0,T)\times\Omega)}\\[1ex]
%
%
\leq& \norm{f\left(\TC_{\frac{1}{\lambda}}(\ths_1+\tilde{\theta})\right)}_{L^{2r/(2-r)}((0,T)\times\Omega)}\norm{\dyw u_{\Delta,t}}_{L^2((0,T)\times\Omega)}\\[1ex]
& + \norm{\left(f\left(\TC_{\frac{1}{\lambda}}(\ths_1+\tilde{\theta})\right)- f\left(\TC_{\frac{1}{\lambda}}(\ths_2+\tilde{\theta})\right)\right)}_{L^{2r/(2-r)}((0,T)\times\Omega)}\norm{\dyw u_{2,t}}_{L^2((0,T)\times\Omega)}.
\end{split}\label{eq:f1-f2}
\end{equation}
Using the estimate \eqref{eq:ap1a} for $u_2$ and continuity of the operator $f\left(\TC_{\frac{1}{\lambda}}((\cdot)-\tht)\right)$  from $L^r((0,T)\times\Omega;\R)$ to $L^{2r/(2-r)}((0,T)\times\Omega;\R)$, we obtain that for any $\epsilon_2>0$ there exists $\delta_2>0$ such that if $\norm{\ths\r}_{L^r((0,T)\times\Omega))}\leq\delta_2$ it holds that
\begin{equation}\label{eq:eps_2}
\norm{\left(f\left(\TC_{\frac{1}{\lambda}}(\ths_1+\tilde{\theta})\right)- f\left(\TC_{\frac{1}{\lambda}}(\ths_2+\tilde{\theta})\right)\right)}_{L^{2r/(2-r)}((0,T)\times\Omega)}\norm{\dyw u_{2,t}}_{L^2((0,T)\times\Omega)}\leq\epsilon_2.
\end{equation}
Moreover from properties of cut-off function $\TC_{\frac{1}{\lambda}}$ and the estimate \eqref{eq:eps_1} we conclude that \eqref{eq:f1-f2} gives us
\begin{equation}\label{eq:f1-f2_2}
\begin{split}
\norm{-f\left(\TC_{\frac{1}{\lambda}}(\ths_1+\tilde{\theta})\right)\dyw u_{1,t}+ f\left(\TC_{\frac{1}{\lambda}}(\ths_2+\tilde{\theta})\right)&\dyw u_{2,t}}_{L^r((0,T)\times\Omega)}\\[1ex]
%
%
\leq& T\sup_{-1/\lambda\leq\xi\leq 1/\lambda}|f(\lambda)||\Omega|\epsilon_1+\epsilon_2.
\end{split}
\end{equation}
Now let us focus on the second term on the right-hand side of the inequality \eqref{eq:th_r1}. Since cut-off function $\TC_{\frac{1}{\lambda}}$ defines a continuous operator from $L^1((0,T)\times\Omega;\R)$ to $L^r((0,T)\times\Omega;\R)$, we firstly consider the following difference:
\begin{equation}\label{eq:T1-T2}
\begin{split}
\norm{\epsp_{1,t}:\T_1  - &\epsp_{2,t}:\T_2}_{L^1((0,T)\times\Omega)}\\[1ex]
%
%
\leq& \norm{\epsp_{1,t}:\T\r}_{L^1((0,T)\times\Omega)}+\norm{\epsp_{\Delta,t}:\T_2}_{L^1((0,T)\times\Omega)}\\[1ex]
%
%
\leq& \norm{\epsp_{1,t}}_{L^2((0,T)\times\Omega)}\norm{\T\r}_{L^2((0,T)\times\Omega)}+\norm{\epsp_{\Delta,t}}_{L^2((0,T)\times\Omega)}\norm{\T_2}_{L^2((0,T)\times\Omega)}\\[1ex]
%
%
\leq& \epsilon_1\left(\norm{\epsp_{1,t}}_{L^2((0,T)\times\Omega)}+\norm{\T_2}_{L^2((0,T)\times\Omega)}\right)\\[1ex]
%
%
\leq& \epsilon_1 C(T)\left(\sup_{-1/\lambda\leq\xi\leq 1/\lambda}|f(\lambda)| +\norm{\ve^{p,0}}_{L^2(\Omega)}+ \norm{u_0}_{\Ho}+\norm{F}_{L^2((0,T)\times\Omega)}\right),
\end{split}
\end{equation}
where we have used estimates \eqref{eq:eps_1} and \eqref{eq:ap1a}. Next fix $\epsilon_3>0$ and choose such $\epsilon_1>0$ (and consequently $\delta_1$) that the therm $\norm{\epsp_{1,t}:\T_1  - \epsp_{2,t}:\T_2}_{L^1((0,T)\times\Omega)}$ is small enough to claim that:
\begin{equation}\label{eq:eps_3}
\norm{\TC_{\frac{1}{\lambda}}\big(\epsp_{1,t}:\T_1\big)- \TC_{\frac{1}{\lambda}}\big(\epsp_{2,t}:\T_2\big)}_{L^r((0,T)\times\Omega)}\leq\epsilon_3.
\end{equation}
The estimate \eqref{eq:th_r1} together with \eqref{eq:f1-f2_2} and \eqref{eq:eps_3} give us
\begin{equation}\label{eq:th_r2}
\begin{split}
\norm{\theta\r}_{W^{1,r}(0,T;L^r(\Omega))}+ \norm{\theta\r}&_{L^r(0,T;W^{2,r}(\Omega))}\\[1ex]
&\leq C(T)\left(T\sup_{-1/\lambda\leq\xi\leq 1/\lambda}|f(\lambda)||\Omega|\epsilon_1+\epsilon_2+\epsilon_3\right)
\end{split}
\end{equation}
Therefore for any $\epsilon>0$ there exists $\delta>0$ such that if only $\norm{\ths\r}_{L^r((0,T)\times\Omega)}\leq\delta$ then
\begin{equation}
\begin{split}
\norm{\pis{A}(\ths_1)-\pis{A}(\ths_2)}_{W^{1,r}(0,T;L^r(\Omega))}+& \norm{\pis{A}(\ths_1)-\pis{A}(\ths_2)}_{L^r(0,T;W^{2,r}(\Omega))}\\[1ex]
&=\norm{\theta\r}_{W^{1,r}(0,T;L^r(\Omega))}+ \norm{\theta\r}_{L^r(0,T;W^{2,r}(\Omega))}\leq\epsilon.
\end{split}
\end{equation}
\end{proof}
%
%
\begin{lem}The operator 
$$
\pis{A}\colon L^r((0,T)\times\Omega;\R)\to L^r((0,T)\times\Omega;\R)
$$
is compact.
\label{lem:A-comp}
\end{lem}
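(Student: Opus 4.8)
The plan is to deduce compactness of $\pis{A}$ from the continuity established in Lemma \ref{lem:A-cont} together with a compact Sobolev embedding, i.e.\ by factoring $\pis{A}$ through a space that embeds compactly into $L^r((0,T)\times\Omega;\R)$. By Lemma \ref{lem:A-cont} the operator $\pis{A}$ maps $L^r((0,T)\times\Omega;\R)$ continuously into
$$
X_r:=L^r(0,T;W^{2,r}(\Omega;\R))\cap W^{1,r}(0,T;L^r(\Omega;\R)),
$$
and moreover, reading off the right-hand sides of the estimates \eqref{eq:ap1a}, \eqref{eq:ap2a} in Lemmas \ref{lem:ap1} and \ref{lem:AP2}, the image of \emph{any} bounded set in $L^r((0,T)\times\Omega;\R)$ is bounded in $X_r$: indeed the bound on $RHS$ in $L^r$ (hence in $L^2$, by the truncation at height $1/\lambda$ and the continuity of $f$ on compacts) is uniform once $\lambda$ is fixed, so $\norm{\pis{A}(\ths)}_{X_r}$ is bounded by a constant depending only on $\lambda$, $T$, $\Omega$, $F$ and the initial data — \emph{not} on $\ths$. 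Thus it suffices to show that bounded subsets of $X_r$ are relatively compact in $L^r((0,T)\times\Omega;\R)$.

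The key step is an Aubin--Lions--Simon argument. First I would record the chain of embeddings $W^{2,r}(\Omega;\R)\hookrightarrow\hookrightarrow W^{1,r}(\Omega;\R)\hookrightarrow L^r(\Omega;\R)$, the first being compact because $\Omega$ is bounded with $C^2$ boundary (Rellich--Kondrachov). A bounded set in $X_r$ is then bounded in $L^r(0,T;W^{2,r}(\Omega))$ with time derivatives bounded in $L^r(0,T;L^r(\Omega))$, so by the Aubin--Lions--Simon lemma it is relatively compact in $L^r(0,T;W^{1,r}(\Omega))$, and a fortiori in $L^r(0,T;L^r(\Omega))=L^r((0,T)\times\Omega;\R)$. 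Combining this with the boundedness of $\pis{A}(B)$ in $X_r$ for every bounded $B$ shows $\pis{A}(B)$ is relatively compact in $L^r((0,T)\times\Omega;\R)$; together with the continuity from Lemma \ref{lem:A-cont}, $\pis{A}$ is a compact operator.

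I do not expect a serious obstacle here; the only point requiring a little care is verifying that the a priori bounds \eqref{eq:ap1a}--\eqref{eq:ap2a} genuinely give a bound on $\pis{A}(\ths)$ in $X_r$ that is independent of $\ths$ — this uses that $f\big(\TC_{1/\lambda}(\ths+\tht)\big)$ is bounded in $L^\infty$, hence in every $L^p$, by $\max_{|\xi|\le 1/\lambda}|f(\xi)|$, and that the dissipation term is bounded in $L^\infty$ by $1/\lambda$ owing to the outer truncation $\TC_{1/\lambda}$. With that observation in hand the compact embedding does the rest, and the lemma follows.
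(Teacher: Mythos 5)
Your proposal is correct and follows essentially the same route as the paper, which simply invokes the Aubin--Lions compact embedding $L^r(0,T;W^{2,r}(\Omega))\cap W^{1,r}(0,T;L^r(\Omega))\subset\subset L^r((0,T)\times\Omega)$ together with the continuity from Lemma~\ref{lem:A-cont}. You merely spell out more explicitly the boundedness of $\pis{A}(B)$ in $X_r$ that the paper leaves implicit (and records separately in the proof of Lemma~\ref{lem:p-staly}), which is indeed the point that makes the Aubin--Lions argument go through.
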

The Lemma above is a simple consequence of the Aubin-Lions lemma (see for example \cite[Lemma 7.7]{Roubicekbook}) {\it i.e.}
$$
L^r(0,T;W^{2,r}(\Omega;\R)\cap W^{1,r}(0,T;L^r(\Omega;\R))\subset\subset L^r((0,T)\times\Omega;\R)
$$
and the Lemma \ref{lem:A-cont}.
%
%
\begin{lem} There exists $\theta\in L^r((0,T)\times\Omega;\R)$ such that $\pis{A}(\theta)=\theta$.
\label{lem:p-staly}
\end{lem}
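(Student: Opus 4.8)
The plan is to extract the fixed point from the Schauder fixed point theorem applied to $\pis{A}$ on a suitable ball of $L^r((0,T)\times\Omega;\R)$. By Lemma \ref{lem:A-cont} the operator $\pis{A}$ is continuous, and by Lemma \ref{lem:A-comp} it is compact, so the only thing still needed is a nonempty, closed, bounded, convex set $C\subset L^r((0,T)\times\Omega;\R)$ with $\pis{A}(C)\subseteq C$. I would take $C$ to be a closed ball of sufficiently large radius, the self-mapping property following from the observation that the a priori estimates \eqref{eq:ap1a} and \eqref{eq:ap2a} are \emph{uniform} with respect to the argument $\ths$.

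To verify this, fix $\ths\in L^r((0,T)\times\Omega;\R)$, let $(u,\T,\epsp)$ solve \eqref{AP1} and set $\theta=\pis{A}(\ths)$. Because $\TC_{\frac{1}{\lambda}}$ has range $[-1/\lambda,1/\lambda]$ and $f$ is continuous, $\norm{f(\TC_{\frac{1}{\lambda}}(\ths+\tht))}_{L^\infty((0,T)\times\Omega)}\leq m_\lambda:=\max_{|\xi|\leq 1/\lambda}|f(\xi)|$ independently of $\ths$; inserting this into \eqref{eq:ap1a} bounds $\norm{u}_{H^1(0,T;\Ho)}$, $\norm{\epsp}_{H^1(0,T;L^2(\Omega))}$ and $\norm{\T}_{L^2((0,T)\times\Omega)}$ by a constant depending only on $\lambda$, $T$, $u_0$, $\ve^{p,0}$ and $F$. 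Then the datum
$$
RHS=-f\big(\TC_{\frac{1}{\lambda}}(\ths+\tht)\big)\dyw u_t+\TC_{\frac{1}{\lambda}}\big(\epsp_t:\T\big)
$$
of \eqref{AP2} is bounded in $L^2((0,T)\times\Omega;\R)\subset L^r((0,T)\times\Omega;\R)$ uniformly in $\ths$ — the first summand by $m_\lambda\norm{\dyw u_t}_{L^2((0,T)\times\Omega)}$, the second one pointwise by $1/\lambda$. Hence \eqref{eq:ap2a} gives $\norm{\theta}_{L^\infty(0,T;W^{2-2/r,r}(\Omega))}\leq R_0$ with $R_0$ not depending on $\ths$, and via the embedding $W^{2-2/r,r}(\Omega)\hookrightarrow L^r(\Omega)$ I obtain a radius $R>0$ with $\norm{\pis{A}(\ths)}_{L^r((0,T)\times\Omega)}\leq R$ for every $\ths$.

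With this in hand I would take $C:=\{v\in L^r((0,T)\times\Omega;\R):\norm{v}_{L^r((0,T)\times\Omega)}\leq R\}$, which is nonempty, closed, bounded and convex and, by the previous paragraph, invariant under $\pis{A}$; since $\pis{A}|_C$ is continuous and has relatively compact image, the Schauder fixed point theorem produces the desired $\theta\in C\subset L^r((0,T)\times\Omega;\R)$ with $\pis{A}(\theta)=\theta$.

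The only genuinely delicate point is the uniform-in-$\ths$ bound on the range of $\pis{A}$, and I expect it to amount to careful bookkeeping rather than a real difficulty; still, it is worth emphasising that it hinges on the truncations $\TC_{\frac{1}{\lambda}}$: without them neither $f(\ths+\tht)$ nor the dissipation term $\epsp_t:\T$ could be controlled a priori independently of the unknown temperature, and the self-mapping property of $C$ would break down.
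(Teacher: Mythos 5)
Your proposal is correct and follows essentially the same route as the paper: Schauder's fixed point theorem combined with the compactness of $\pis{A}$ from Lemma~\ref{lem:A-comp} and a uniform-in-$\ths$ a priori bound on $\pis{A}(\ths)$ obtained by chaining \eqref{eq:ap1a} and \eqref{eq:ap2a}, the uniformity hinging exactly on the truncations $\TC_{\frac{1}{\lambda}}$ as you observe. The only (cosmetic) difference is that you spell out the $L^\infty$ bound $m_\lambda$ on $f(\TC_{\frac{1}{\lambda}}(\ths+\tht))$ and the pointwise bound $1/\lambda$ on the truncated dissipation explicitly, whereas the paper states the resulting ball radius $R$ directly.
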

\begin{proof}
We use the Schauder fixed point theorem. Observe that compactness of the operator $\pis{A}\colon L^r((0,T)\times\Omega;\R)\to L^r((0,T)\times\Omega;\R)$ is given by Lemma \ref{lem:A-comp}. It remains to prove that $\pis{A}$ takes its values in a ball $B(0,R)\subset L^r((0,T)\times\Omega;\R)$ for some $R>0$. Let us fix any $\ths\in L^r((0,T)\times\Omega;\R)$. Using inequalities \eqref{eq:ap2a} and \eqref{eq:ap1a} we estimate $\theta:=\pis{A}(\ths)$ as follows
\begin{equation}
\begin{split}
\norm{\theta}&_{L^r((0,T)\times\Omega)}\\[1ex]
&\leq \norm{\theta}_{W^{1,r}(0,T;L^r(\Omega))}+ \norm{\theta}_{L^r(0,T;W^{2,r}(\Omega))}\\[1ex]
&\leq  C(T)\left(\sup_{-1/\lambda\leq\xi\leq 1/\lambda}|f(\lambda)| +\norm{\ve^{p,0}}_{L^2(\Omega)}+ \norm{u_0}_{\Ho}+\norm{F}_{L^2((0,T)\times\Omega)}\right)
\end{split}
\end{equation}
and the right-hand side of the last inequality defines us a proper radius $R$.
\end{proof}
\begin{tw} For each $\lambda>0$ the system (\ref{eq:Appr}) with initial and boundary data (\ref{eq:Ini-Bond}) has a solution $(\ul,\Tl,\epspl,\thl)\in L^\infty((0,T);H^1(\Omega;\R^3)\times (L^2(\Omega;\S))^2\times W^{1,r}(\Omega;\R)$. Moreover it holds that $(\ul_t,\Tl_t,\epspl_t,\thl_t)\in L^\infty((0,T);L^2(\Omega;\R^3)\times (L^2(\Omega;\S))^2)\times L^r((0,T)\times\Omega;\R)$.
\label{tw:istn-lambda-r}
\end{tw}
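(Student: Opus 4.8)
The plan is to read off Theorem~\ref{tw:istn-lambda-r} directly from the fixed point produced in Lemma~\ref{lem:p-staly}, by unpacking the definition of the operator $\pis{A}$ and collecting the regularity already supplied by Lemmas~\ref{lem:ap1} and~\ref{lem:AP2}. First I would take $\thl\in L^r((0,T)\times\Omega;\R)$ with $\pis{A}(\thl)=\thl$. By the construction of $\pis{A}$, inserting $\ths:=\thl$ into the first auxiliary problem \eqref{AP1} and applying Lemma~\ref{lem:ap1} yields $(\sigma^\lambda,\epspl,\ul)$ with $\sigma^\lambda\in L^\infty(0,T;L^2(\Omega;\S))$, $\epspl\in H^1(0,T;L^2(\Omega;\S))$ and $\ul\in H^1(0,T;\HoR)$; setting $\Tl:=\D(\ve(\ul)-\epspl)$, these functions satisfy $(\ref{eq:Appr})_1$--$(\ref{eq:Appr})_4$ together with the data \eqref{eq:Ini-Bond}. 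Feeding $(\ul,\Tl,\epspl)$ into the second auxiliary problem \eqref{AP2} and using Lemma~\ref{lem:AP2}, the resulting temperature is by definition $\pis{A}(\thl)$, hence equals $\thl$. Thus the truncated temperature appearing in \eqref{AP1}--\eqref{AP2} coincides with the solution itself, equation $(\ref{eq:Appr})_5$ holds, and $(\ul,\Tl,\epspl,\thl)$ solves the full approximate system.

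It then remains to record the claimed regularity. The mechanical part is inherited from Lemma~\ref{lem:ap1} and the embedding $H^1(0,T;X)\hookrightarrow C([0,T];X)$: $\ul\in C([0,T];\HoR)$, $\epspl\in C([0,T];L^2(\Omega;\S))$, hence $\Tl=\D(\ve(\ul)-\epspl)\in L^\infty(0,T;L^2(\Omega;\S))$; moreover $\epspl_t=Y_\lambda(\Tl)$ with $Y_\lambda$ globally Lipschitz and $Y_\lambda(0)=0$, and $\sigma^\lambda=\Tl-f(\TC_{\frac{1}{\lambda}}(\thl+\tht))\id$ with $f\circ\TC_{\frac{1}{\lambda}}$ bounded by $\max_{|\xi|\leq 1/\lambda}|f(\xi)|$. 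For the temperature the point is that the right-hand side of $(\ref{eq:Appr})_5$ in fact lies in $L^2((0,T)\times\Omega;\R)$ --- $f(\TC_{\frac{1}{\lambda}}(\thl+\tht))$ is bounded, $\dyw\ul_t\in L^2((0,T)\times\Omega;\R)$, and $\TC_{\frac{1}{\lambda}}(\epspl_t:\Tl)$ is bounded by $1/\lambda$ --- so $L^2$ maximal parabolic regularity for the Neumann heat operator (with $\theta_0\in H^1(\Omega;\R)$) puts $\thl$ in $W^{1,2}(0,T;L^2(\Omega;\R))\cap L^2(0,T;H^2(\Omega;\R))\hookrightarrow C([0,T];H^1(\Omega;\R))$, whence $\thl\in L^\infty(0,T;W^{1,r}(\Omega;\R))$ (on the bounded domain $\Omega$ one has $H^1\hookrightarrow W^{1,r}$ for $r<2$) and $\thl_t\in L^2((0,T)\times\Omega;\R)\subset L^r((0,T)\times\Omega;\R)$.

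Because solvability and a priori bounds for the two decoupled subproblems, and the continuity, compactness and boundedness of the range of $\pis{A}$, were already established in Lemmas~\ref{lem:ap1}--\ref{lem:p-staly}, there is no genuinely hard step left: the theorem is essentially an assembly of these ingredients. The only two points that really need to be stated are that the identity $\pis{A}(\thl)=\thl$ closes the coupling --- the truncated temperature used to build the mechanical fields is the same function that solves the heat equation --- and the mild bootstrap of the heat equation from the $L^r$ class used in the Schauder argument to the $L^2$ maximal-regularity class required for the stated regularity; both are routine, and the main care needed is bookkeeping of which integrability ($L^r$ versus $L^2$) is available for each term of $(\ref{eq:Appr})_5$.
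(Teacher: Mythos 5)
Your proposal matches the paper's argument in structure: the paper's own proof of this theorem is the single sentence that it is a ``straightforward conclusion of Lemmas \ref{lem:ap1}--\ref{lem:p-staly}'', and your unpacking --- taking the fixed point $\thl$ of $\pis{A}$ from Lemma~\ref{lem:p-staly}, feeding it back through \eqref{AP1} and \eqref{AP2} so that the truncated temperature used in the mechanical block coincides with the one solving the heat equation, and reading off the regularity from Lemmas~\ref{lem:ap1} and~\ref{lem:AP2} --- is exactly what that sentence suppresses.

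The one place you depart from the paper is the bootstrap of the heat equation to $L^2$ maximal regularity. In the paper this step is deferred to the proof of Theorem~\ref{tw:istn-lambda}, which upgrades Theorem~\ref{tw:istn-lambda-r} to the $H^1$-level statement. You perform it already here, and this is well-advised rather than superfluous: Lemma~\ref{lem:AP2} only gives $\thl\in C\big([0,T];W^{2-2/r,r}(\Omega)\big)\cap L^r\big(0,T;W^{2,r}(\Omega)\big)$, and since $2-2/r<1$ for $r\in(1,2)$, the space $W^{2-2/r,r}(\Omega)$ is \emph{not} contained in $W^{1,r}(\Omega)$, so the claimed $\thl\in L^\infty\big(0,T;W^{1,r}(\Omega)\big)$ does not follow from the $L^r$ parabolic theory alone. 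Your observation that the right-hand side of $(\ref{eq:Appr})_5$ is in fact in $L^2((0,T)\times\Omega)$ (bounded $f\circ\TC_{1/\lambda}$, $\dyw \ul_t\in L^2$, and the truncation $\TC_{1/\lambda}$ bounding the dissipation) and the resulting $C([0,T];H^1(\Omega))\hookrightarrow L^\infty(0,T;W^{1,r}(\Omega))$ closes this gap cleanly. In short: same route as the paper, with one small but genuine repair that the paper implicitly postpones.
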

\begin{proof}The assertion of Theorem is a straightforward conclusion of Lemmas from \ref{lem:ap1} to \ref{lem:p-staly}.
\end{proof}
Now we are ready to justify Theorem \ref{tw:istn-lambda}.
\begin{proof}[Proof of the Theorem \ref{tw:istn-lambda}] Since Theorem \ref{tw:istn-lambda-r} holds true, it is sufficient to prove the higher integrability of the function $\thl$. But the right-hand side of the last equation in the system \eqref{eq:Appr} ({\it i.e.} $-f\left( \TC_{\frac{1}{\lambda}}(\thl+\tilde{\theta})\right)\dyw  \ul_t+\TC_{\frac{1}{\lambda}}\left(\epspl_t: \Tl\right)$) belongs to $L^2((0,T)\times\Omega;\R)$ while the initial data $\theta_0\in H^1(\Omega)$, we conclude that $\thl$ satisfy 
$$
\theta\in L^2(0,T;H^2(\Omega;\R)),\quad\theta_t\in L^2((0,T)\times\Omega;\R),\quad\theta\in C([0,T];H^1(\Omega;\R)).
$$
This observation finishes the proof.
\end{proof}
\section{Boundedness of approximate solutions}
In this section we are going to prove the main estimates of this article. We show uniform boundedness of approximate solutions. The first estimate is similar to the energy estimate  presented in \cite[Theorem 4.1]{ChelminskiOwczarekthermoII}.
\setcounter{equation}{0}
\begin{tw}
\label{tw:4.1}
For any fixed positive number $K>0$ there exists a positive constant $C(T)$ (not depending on $\lambda$) such that the following inequality
\begin{equation*}\begin{split}
\int\limits _{\Omega}|\Tl(t)|^2\,\di x + \int\limits _0^t\int\limits _{\Omega}|\ve(u_t^{\lambda}(\tau))|^2&\,\di x\,\di\tau +\int\limits _{\Omega}|\theta^{\lambda}(t)|\,\di x\\[1ex]
&+\int\limits _0^t\int\limits _{\Omega}|\nabla\,\TC_K(\theta^{\lambda}(\tau))|^2\,\di x\,\di\tau\,\,\leq\,\, C(T)
\end{split}
\end{equation*}
is satisfied, where $t\leq T$.
\end{tw}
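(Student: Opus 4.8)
The plan is to derive the estimate by testing the approximate system $\eqref{eq:Appr}$ with its natural "energy" test functions and then adding the heat equation tested with a truncation of $\theta^\lambda$, following the strategy of \cite[Theorem 4.1]{ChelminskiOwczarekthermoII} but paying attention to the fact that the dissipation on the right-hand side of $\eqref{eq:Appr}_5$ is now truncated and the thermal stress is nonlinear. First I would differentiate $\eqref{eq:Appr}_4$ in time, test the momentum balance $\eqref{eq:Appr}_1$ with $u^\lambda_t$, and use $\sigma^\lambda = \T^\lambda - f(\TC_{1/\lambda}(\theta^\lambda+\tht))\id$ to obtain, after integration by parts and use of the boundary condition,
\begin{equation*}
\int_\Omega \D^{-1}\T^\lambda_t:\T^\lambda\,\di x + \int_\Omega \epsp_{,t}^\lambda:\T^\lambda\,\di x + \int_\Omega \D(\ve(u^\lambda_t)):\ve(u^\lambda_t)\,\di x = \int_\Omega F\cdot u^\lambda_t\,\di x - \int_\Omega f\big(\TC_{1/\lambda}(\theta^\lambda+\tht)\big)\,\dyw u^\lambda_t\,\di x,
\end{equation*}
using $\epsp_{,t}^\lambda = \ve(u^\lambda_t) - \D^{-1}\T^\lambda_t$. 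Since $Y_\lambda$ is the Yosida approximation of the monotone operator $\partial I_K$ and $Y_\lambda(\T) = (|\PT|-k)_+/(2\lambda)\cdot \PT/|\PT|$, the dissipation term $\epsp_{,t}^\lambda:\T^\lambda = Y_\lambda(\T^\lambda):\T^\lambda \geq 0$ is nonnegative (indeed it equals $\frac{(|\PT^\lambda|-k)_+}{2\lambda}|\PT^\lambda| \geq 0$), which is what makes the first two terms control $\tfrac12\frac{\di}{\di t}\int_\Omega \D^{-1}\T^\lambda:\T^\lambda\,\di x$ from below together with a good sign.

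Next I would test $\eqref{eq:Appr}_5$ with the constant function $1$ (or equivalently integrate over $\Omega$), which after using the Neumann boundary condition $\partial_n\theta^\lambda = 0$ kills the Laplacian and gives
\begin{equation*}
\frac{\di}{\di t}\int_\Omega \theta^\lambda\,\di x = \int_\Omega \TC_{1/\lambda}\big(\epsp_{,t}^\lambda:\T^\lambda\big)\,\di x - \int_\Omega f\big(\TC_{1/\lambda}(\theta^\lambda+\tht)\big)\,\dyw u^\lambda_t\,\di x.
\end{equation*}
Here the crucial cancellation appears: adding this to the mechanical identity, the two copies of $\int_\Omega f(\TC_{1/\lambda}(\theta^\lambda+\tht))\,\dyw u^\lambda_t\,\di x$ cancel, and since $\TC_{1/\lambda}(\epsp_{,t}^\lambda:\T^\lambda) \leq \epsp_{,t}^\lambda:\T^\lambda$ (because the argument is nonnegative, truncation from above only decreases it), the non-sign-definite dissipation term also cancels against a dominating nonnegative term. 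This yields the energy-type bound
\begin{equation*}
\frac12\int_\Omega \D^{-1}\T^\lambda(t):\T^\lambda(t)\,\di x + \int_0^t\int_\Omega \D(\ve(u^\lambda_t)):\ve(u^\lambda_t)\,\di x\,\di\tau + \int_\Omega \theta^\lambda(t)\,\di x \leq \frac12\int_\Omega \D^{-1}\T(0):\T(0)\,\di x + \int_\Omega\theta_0\,\di x + \int_0^t\int_\Omega F\cdot u^\lambda_t\,\di x\,\di\tau,
\end{equation*}
where I would absorb $\int F\cdot u^\lambda_t$ by Young's inequality (Korn plus the Dirichlet condition gives $\|u^\lambda_t\|^2_{H^1_0} \leq C\|\ve(u^\lambda_t)\|^2_{L^2}$) and Gronwall, and use positive definiteness of $\D^{-1}$ together with the initial bound $|P\T(0)|\leq k$ to control $\int|\T(0)|^2$. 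This handles the first three terms of the claimed inequality, with constants independent of $\lambda$.

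For the last term $\int_0^t\int_\Omega|\nabla\TC_K(\theta^\lambda)|^2$, the plan is to test $\eqref{eq:Appr}_5$ with $\TC_K(\theta^\lambda)$: since $\varphi_K' = \TC_K$, the time term becomes $\frac{\di}{\di t}\int_\Omega\varphi_K(\theta^\lambda)\,\di x$, the diffusion term gives exactly $\int_\Omega|\nabla\TC_K(\theta^\lambda)|^2\,\di x$, and the right-hand side is bounded because $|\TC_K(\theta^\lambda)|\leq K$: the truncated dissipation satisfies $|\TC_{1/\lambda}(\epsp_{,t}^\lambda:\T^\lambda)| \leq \epsp_{,t}^\lambda:\T^\lambda$ whose time-space integral is already controlled by the previous step, and the term $f(\TC_{1/\lambda}(\theta^\lambda+\tht))\,\dyw u^\lambda_t$ is handled by $\|f(\cdot)\|_{L^2}\|\dyw u^\lambda_t\|_{L^2} \cdot K$ — but here I need a $\lambda$-independent bound on $\|f(\TC_{1/\lambda}(\theta^\lambda+\tht))\|_{L^2}$, which is the main obstacle. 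The growth hypothesis $\eqref{growth1}$ with $\alpha<5/6$ together with the $L^\infty(0,T;L^1)$ and $L^2(0,T;H^1)$-on-truncations information (a Boccardo–Gallouët interpolation giving $\theta^\lambda\in L^p$ for $p<5/3$ in dimension $3$, hence $f(\theta^\lambda)\in L^2$ since $2\alpha < 5/3$) is exactly what controls this; I would run this interpolation carefully, noting that $\TC_{1/\lambda}$ only helps (it does not increase the norm), and that $\tht\in L^\infty(0,T;H^1)\subset L^\infty(0,T;L^6)$ by $\eqref{eq:2.6}$ contributes a fixed, $\lambda$-independent constant. After bootstrapping these bounds into the $\TC_K$-test and applying Gronwall once more, all four terms are controlled by a constant $C(T)$ depending on the data but not on $\lambda$, which completes the proof.
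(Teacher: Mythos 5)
The skeleton of your argument matches the route the paper points to (test the momentum balance with $u^\lambda_t$, integrate the heat equation over $\Omega$, exploit the cancellation of the two $\int_\Omega f\big(\TC_{1/\lambda}(\theta^\lambda+\tilde\theta)\big)\operatorname{div}u^\lambda_t\,\di x$ terms and the sign of the plastic dissipation, then test the heat equation with $\TC_K(\theta^\lambda)$ for the gradient bound). Apart from a sign slip in the mechanical identity (it is $+\int_\Omega f\,\dyw u^\lambda_t\,\di x$ on the right-hand side, not $-$; otherwise the cancellation you invoke would not occur), the first part is in order. However, there are two genuine gaps.

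First, the combined mechanical/heat identity controls $\int_\Omega\theta^\lambda(t)\,\di x$, not $\int_\Omega|\theta^\lambda(t)|\,\di x$; since $\theta^\lambda$ has no a priori sign, one must separately bound the negative part of $\theta^\lambda$, and this is exactly where the one-sided growth assumption \eqref{growth2} (which your proposal never invokes) is needed — for instance by testing the heat equation with a nonpositive truncation of the temperature and using that $f$ grows at most like a square root for negative arguments. Second, your treatment of the last term is circular: you observe that the term $f(\cdot)\operatorname{div}u^\lambda_t\,\TC_K(\theta^\lambda)$ needs a $\lambda$-independent $L^2$ bound on $f\big(\TC_{1/\lambda}(\theta^\lambda+\tilde\theta)\big)$, and you propose to get it from a Boccardo–Gallou\"et interpolation that relies on the $L^\infty(0,T;L^1)$ bound on $\theta^\lambda$ and on the $L^2$ bound on $\nabla\TC_K(\theta^\lambda)$ — but the latter is precisely the fourth estimate you are trying to establish. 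Moreover, the claim that $\int_0^t\int_\Omega\epsp^{\lambda}_{,t}:\T^\lambda$ is "already controlled by the previous step" is not correct: the energy identity only controls the excess $\int_0^t\int_\Omega\big(\epsp^{\lambda}_{,t}:\T^\lambda-\TC_{1/\lambda}(\epsp^{\lambda}_{,t}:\T^\lambda)\big)$, and the full $L^1$ bound on the dissipation (Corollary \ref{col:4.3} in the paper) in fact uses Theorem \ref{tw:4.2}, which itself comes after Theorem \ref{tw:4.1}. Closing the circle requires a simultaneous absorption argument that exploits $\alpha<\tfrac56$ to put the unknown gradient-of-truncation quantity on both sides of an inequality with a subcritical exponent and apply Young; "I would run this interpolation carefully" does not fill this in.
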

Proof of Theorem \ref{tw:4.1} goes along the same line as the proof of Theorem 4.1 in \cite{ChelminskiOwczarekthermoII}, hence we skip them. It is sufficient to observe the following inequality
\begin{equation*}\begin{split}
\ve^{p,\lambda}_t: \Tl=\frac{1}{2\lambda}(|P\Tl|-k)_{+}\frac{P\Tl}{|P\Tl|}: P\Tl=\frac{1}{2\lambda}(|P\Tl|-k)_{+}|P\Tl|\geq 0\,,
\end{split}
\end{equation*}
which is a consequence of the second law of thermodynamic. The same technique of proof of Theorem \ref{tw:4.1} has been also used in \cite{BlanchardGuibe00}.

The next goal is to prove $L^2-L^2$ estimate for the time derivative $\{\Tl_t\}_{\lambda>0}$. The proof of the theorem below works for gradient flows only.
\begin{tw}[$L^2 - L^2$ estimate for the stress rate] The sequence $\{\Tl_t\}_{\lambda>0}$  is uniformly (with respect to $\lambda$) bounded in the space $L^2(0,T;L^2(\Omega;\S))$.
\label{tw:4.2}
\end{tw}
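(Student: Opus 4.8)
The goal is an $L^2(0,T;L^2(\Omega;\mathcal S))$ bound on $\mathbb T^\lambda_t$ that is uniform in $\lambda$. Since $\epsp_t = Y_\lambda(\mathbb T)$ comes from the Yosida approximation of the subgradient $\partial I_K$, and $Y_\lambda$ is itself the gradient of the convex Moreau--Yosida regularisation $(I_K)_\lambda$, the flow rule is a gradient flow and the standard trick applies: differentiate the constitutive chain in time and test with $\mathbb T^\lambda_t$. Concretely, from $\mathbb T^\lambda = \mathbb D(\ve(\ul)-\epspl)$ we get $\mathbb D^{-1}\mathbb T^\lambda_t = \ve(\ul_t) - \epspl_t = \ve(\ul_t) - Y_\lambda(\mathbb T^\lambda)$, so testing with $\mathbb T^\lambda_t$ gives
\begin{equation*}
\int_\Omega \mathbb D^{-1}\mathbb T^\lambda_t : \mathbb T^\lambda_t \, \di x + \int_\Omega Y_\lambda(\mathbb T^\lambda) : \mathbb T^\lambda_t \, \di x = \int_\Omega \ve(\ul_t) : \mathbb T^\lambda_t \, \di x .
\end{equation*}
The first term controls $\|\mathbb T^\lambda_t\|_{L^2(\Omega)}^2$ from below by positive-definiteness of $\mathbb D^{-1}$. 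The second term is $\frac{\di}{\di t}\int_\Omega (I_K)_\lambda(\mathbb T^\lambda)\,\di x$ because $Y_\lambda = \nabla (I_K)_\lambda$; since $(I_K)_\lambda \geq 0$, integrating in time this term contributes a nonnegative quantity minus the initial value $\int_\Omega (I_K)_\lambda(\mathbb T^\lambda(0))\,\di x$, and the latter is bounded: the hypothesis $|P\mathbb T(0,x)| = |P\mathbb D(\ve(u_0)-\epsp_0)| \leq k$ means $\mathbb T^\lambda(0) \in K$, so $(I_K)_\lambda(\mathbb T^\lambda(0)) = 0$. Hence the monotone term causes no trouble at all.

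**Handling the right-hand side.** What remains is to absorb $\int_0^t\int_\Omega \ve(\ul_t):\mathbb T^\lambda_t$. The natural move is Young's inequality, $\int_\Omega \ve(\ul_t):\mathbb T^\lambda_t \leq \eta\|\mathbb T^\lambda_t\|_{L^2(\Omega)}^2 + C_\eta \|\ve(\ul_t)\|_{L^2(\Omega)}^2$, absorbing the first piece into the coercive term on the left. Then one integrates over $(0,t)$ and invokes Theorem~\ref{tw:4.1}, which already provides the uniform bound $\int_0^T\int_\Omega |\ve(\ul_t)|^2\,\di x\,\di\tau \leq C(T)$. That closes the estimate and yields $\|\mathbb T^\lambda_t\|_{L^2(0,T;L^2(\Omega))} \leq C(T)$ with $C(T)$ independent of $\lambda$.

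**The main obstacle.** The delicate point is not the algebra but its justification: one must be sure $\mathbb T^\lambda_t$ is an admissible test function and that $t\mapsto \int_\Omega (I_K)_\lambda(\mathbb T^\lambda(t))\,\di x$ is genuinely absolutely continuous with derivative $\int_\Omega Y_\lambda(\mathbb T^\lambda):\mathbb T^\lambda_t$. For fixed $\lambda$ the regularity from Theorem~\ref{tw:istn-lambda} ($\mathbb T^\lambda_t \in L^\infty(0,T;L^2(\Omega;\mathcal S))$) makes all of this rigorous, and the chain rule for the Lipschitz, convex, $C^1$ function $(I_K)_\lambda$ composed with an $H^1$-in-time map is standard. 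A secondary point worth a line is that one should either test the time-differentiated momentum balance to see that $\ve(\ul_t)$ pairs correctly with $\mathbb T^\lambda_t$, or simply note that $\mathbb T^\lambda = \sigma^\lambda + f(\TC_{1/\lambda}(\thl+\tilde\theta))\id$ and the divergence equation $(\ref{eq:Appr})_1$ let one integrate by parts; but since only the $L^2$ norm of $\ve(\ul_t)$ is needed and that is already controlled, one can stay entirely at the level of the constitutive identity without touching the PDE. I would therefore organise the proof as: (i) differentiate the elastic relation in time; (ii) test with $\mathbb T^\lambda_t$ and identify the monotone term as a time derivative of a nonnegative functional vanishing at $t=0$; (iii) Young-absorb the elastic-rate term; (iv) integrate and apply Theorem~\ref{tw:4.1}.
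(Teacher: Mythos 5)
Your proposal is correct and takes essentially the same approach as the paper: the paper's function $M_\lambda(x,t)=\frac{(|P\Tl|-k)_+^2}{4\lambda}$ is precisely the Moreau--Yosida regularisation $(I_K)_\lambda$ you invoke (with $Y_\lambda=\nabla M_\lambda$), and the paper likewise writes $\frac{\di}{\di t}\int_\Omega M_\lambda\,\di x=\int_\Omega \epspl_t:\Tl_t\,\di x=-\int_\Omega\D^{-1}\Tl_t:\Tl_t\,\di x+\int_\Omega\ve(\ul_t):\Tl_t\,\di x$, integrates in time using $M_\lambda(0)=0$, and closes with Theorem \ref{tw:4.1} and H\"older/Young.
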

\begin{proof}
Define
\begin{equation}
M_{\lambda}(x,t)=\frac{(|P\Tl(x,t)|-k)_{+}^2}{4\lambda}\,.
\label{eq:4.1}
\end{equation}
Computing the time derivative of (\ref{eq:4.1}), using equation $(\ref{eq:Appr})_3$ and $(\ref{eq:Appr})_4$ we obtain
\begin{equation}\begin{split}
\frac{\di}{\di t}\int\limits _{\Omega}M_{\lambda}\,\di x&=\int\limits _{\Omega}\frac{1}{2\lambda}(|P\Tl|-k)_{+}\frac{P\Tl}{|P\Tl|}: P\Tl_t\,\di x=\int\limits _{\Omega}\ve^{p,\lambda}_t: \Tl_t\,\di x\\[1ex]
&= -\int\limits _{\Omega}\D^{-1} \Tl_t: \Tl_t\,\di x +\int\limits _{\Omega}\ve(u^{\lambda}_t):\Tl_t\,\di x\,.
\end{split}
\label{eq:4.2}
\end{equation}
Integrating with respect to time and using assumption on initial data ($|P\Tl(0)|\leq k$) we have
\begin{equation}
\int\limits _{\Omega}M_{\lambda}\,\di x+\int\limits _0^t\int\limits _{\Omega}\D^{-1} \Tl_t \Tl_t\,\di x\di\tau =\int\limits _0^t\int\limits _{\Omega}\ve(u^{\lambda}_t)\Tl_t\,\di x\di\tau\,.
\label{eq:4.3}
\end{equation}
Theorem \ref{tw:4.1} yields that the sequence $\{\ve(u^{\lambda}_t)\}_{\lambda>0}$  is uniformly bounded (with respect to $\lambda$) in the space $L^2(0,T;L^2(\Omega;\S))$, hence the H\"older and Cauchy inequalities finish the proof.
\end{proof}
\begin{uwa}
Result from Theorem \ref{tw:4.2} will be needed to prove the strong $L^2-$ convergence of the sequence $\{\Tl\}_{\lambda>0}$ and allow us to characterise the weak limit of the field $Y_\lambda$. It is worth to mentioned that in the previous paper \cite{ChelminskiOwczarekthermoII} we used the Minty-Browder monotone trick to identify the weak limits in nonlinearities. Minty-Browder trick does not require the above information. 
\end{uwa}
\begin{col}
\label{col:4.3}
Theorems \ref{tw:4.1} and \ref{tw:4.2} imply that the sequences $\{\ve^{p,\lambda}_t\}_{\lambda>0}$ and \\$\{\ve^{p,\lambda}_t: \Tl\}_{\lambda>0}$ are uniformly (with respect to $\lambda$) bounded in the space $L^2(0,T;L^2(\Omega;\SS))$ and $L^1(0,T;L^1(\Omega;\R))$, respectively.
\end{col}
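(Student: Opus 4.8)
The plan is to establish the two claimed bounds directly from the results already available, namely Theorem~\ref{tw:4.1} (the energy-type estimate, uniform in $\lambda$) and Theorem~\ref{tw:4.2} (the $L^2$--$L^2$ bound on $\{\Tl_t\}_{\lambda>0}$), together with the explicit form of the Yosida approximation $Y_\lambda$ in \eqref{eq:Yosida}. No new approximation or limiting argument is needed; this is a pointwise/algebraic deduction followed by an application of H\"older's inequality.

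First I would treat $\{\epspl_t\}_{\lambda>0}$. By the flow rule $(\ref{eq:Appr})_3$ we have $\epspl_t = Y_\lambda(\Tl)$, and since $\partial I_K$ is a maximal monotone operator with $0\in\partial I_K(\T)$ whenever $\T\in K$, its Yosida approximation is $1$-Lipschitz in the appropriate sense and satisfies $|Y_\lambda(\Tl)|\leq |\partial I_K(\Tl)|^\circ$ pointwise in the limit; more concretely, I would use the identity $\epspl_t=\D^{-1}(\ve(u^\lambda_t)-\D^{-1}\Tl_t)$ rearranged from $(\ref{eq:Appr})_4$ differentiated in time, i.e. $\Tl_t=\D(\ve(u^\lambda_t)-\epspl_t)$, hence $\epspl_t=\ve(u^\lambda_t)-\D^{-1}\Tl_t$. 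Taking $L^2((0,T)\times\Omega)$ norms and using the triangle inequality together with the boundedness of $\D^{-1}$ on symmetric matrices gives
\begin{equation*}
\norm{\epspl_t}_{L^2(0,T;L^2(\Omega))}\leq \norm{\ve(u^\lambda_t)}_{L^2(0,T;L^2(\Omega))}+\norm{\D^{-1}}\,\norm{\Tl_t}_{L^2(0,T;L^2(\Omega))},
\end{equation*}
and both terms on the right are bounded uniformly in $\lambda$ by Theorems~\ref{tw:4.1} and~\ref{tw:4.2} respectively. One subtlety: $\epspl_t$ is only symmetric-matrix valued, not automatically deviatoric; but from \eqref{eq:Yosida} it is a scalar multiple of $P\Tl$, hence deviatoric, so the claim that the bound holds in $L^2(0,T;L^2(\Omega;\SS))$ is legitimate.

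Next I would treat $\{\epspl_t:\Tl\}_{\lambda>0}$. The natural route is H\"older: $\norm{\epspl_t:\Tl}_{L^1(0,T;L^1(\Omega))}\leq \norm{\epspl_t}_{L^2(0,T;L^2(\Omega))}\,\norm{\Tl}_{L^2(0,T;L^2(\Omega))}$ (applying Cauchy--Schwarz pointwise to the matrix inner product, then H\"older in $(x,t)$). The first factor was just bounded uniformly in $\lambda$; the second is bounded uniformly in $\lambda$ because Theorem~\ref{tw:4.1} controls $\sup_{t\leq T}\int_\Omega|\Tl(t)|^2\,\di x$, which gives an $L^\infty(0,T;L^2(\Omega))\hookrightarrow L^2(0,T;L^2(\Omega))$ bound on $\Tl$ (the time interval being finite). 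This yields the asserted uniform $L^1(0,T;L^1(\Omega;\R))$ bound.

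The only point requiring a little care — and the mildest "obstacle" here — is the deviatoric-valuedness issue noted above and making sure the $L^2$ bound on $\epspl_t$ is genuinely $\lambda$-uniform rather than merely finite for each fixed $\lambda$; both are handled by invoking $(\ref{eq:Appr})_4$ in differentiated form so that the bound is inherited verbatim from Theorems~\ref{tw:4.1} and~\ref{tw:4.2}, which are themselves $\lambda$-uniform by hypothesis. Everything else is a one-line application of H\"older's inequality, so the corollary follows immediately.
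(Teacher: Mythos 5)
Your proof is correct and is essentially the intended argument: extract $\epspl_t = \ve(u^\lambda_t) - \D^{-1}\Tl_t$ from the time-differentiated constitutive relation $(\ref{eq:Appr})_4$, apply Theorems~\ref{tw:4.1} and~\ref{tw:4.2} to the two terms, and then conclude the $L^1$ bound on $\epspl_t:\Tl$ via Cauchy--Schwarz and H\"older together with the $L^\infty(0,T;L^2)$ bound on $\Tl$ from Theorem~\ref{tw:4.1}; your observation that $\epspl_t=Y_\lambda(\Tl)$ is a scalar multiple of $P\Tl$ and hence genuinely $\SS$-valued is also the right justification for stating the bound in $L^2(0,T;L^2(\Omega;\SS))$. (One cosmetic remark: $Y_\lambda$ is $\frac{1}{\lambda}$-Lipschitz, not $1$-Lipschitz, but you never rely on that — the algebraic identity you actually use carries the whole argument.)
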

Corollary \ref{col:4.3} yields that the right-hand side of heat equation $(\ref{eq:Appr})_5$ is bounded in $L^1(0,T;L^1(\Omega;\R))$. It is sufficient to use Boccardo and Gallou\"et approach  and prove the almost pointwise convergence of temperature's approximate sequence to a measurable function $\theta$. 
\begin{lem}
\label{lem:4.4}
The sequence $\{\theta^{\lambda}\}_{\lambda>0}$ is uniformly bounded (with respect to $\lambda$) in the space $L^{q}(0,T;W^{1,q}(\Omega;\R))$ for all $1\leq q<\frac{5}{4}$.
\end{lem}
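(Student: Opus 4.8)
The plan is to use the classical Boccardo--Gallou\"et technique \cite{bocc89} together with the uniform $L^1$-bound on the right-hand side of $(\ref{eq:Appr})_5$ coming from Corollary \ref{col:4.3}. First I would test the heat equation $(\ref{eq:Appr})_5$ with a suitable truncation of $\thl$: more precisely, with $\TC_K(\thl)$ for a fixed level $K>0$. Since $\{\TC_K(\thl)\}_{\lambda>0}$ is bounded in $L^2(0,T;H^1(\Omega))$ by Theorem \ref{tw:4.1}, and since $\thl$ is bounded in $L^\infty(0,T;L^1(\Omega))$ (again Theorem \ref{tw:4.1}), one obtains for each $K$ a bound
\begin{equation*}
\int\limits_0^T\int\limits_{\Omega}|\nabla\TC_K(\thl)|^2\,\di x\,\di t\leq C(1+K),
\end{equation*}
where $C$ does not depend on $\lambda$; the linear-in-$K$ growth is the source of the restriction $q<\frac{5}{4}$.

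Next I would decompose $\int_0^T\int_\Omega|\nabla\thl|^q\,\di x\,\di t$ according to the level sets $\{k-1\leq|\thl|<k\}$, $k\in\mathbb{N}$, writing
\begin{equation*}
\int\limits_0^T\int\limits_{\Omega}|\nabla\thl|^q\,\di x\,\di t=\sum_{k=1}^{\infty}\int\limits_{\{k-1\leq|\thl|<k\}}|\nabla\thl|^q\,\di x\,\di t,
\end{equation*}
and on each slab applying H\"older's inequality with exponents $2/q$ and $2/(2-q)$ to get $|\{k-1\leq|\thl|<k\}|^{1-q/2}$ times $\big(\int_{\{k-1\leq|\thl|<k\}}|\nabla\thl|^2\big)^{q/2}$. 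The gradient integral on the slab is controlled by $\int_0^T\int_\Omega|\nabla\TC_k(\thl)-\nabla\TC_{k-1}(\thl)|^2$, hence by $C(1+k)$ from the previous step. The measures of the slabs are summable after a second use of H\"older and the Sobolev/Poincar\'e--Wirtinger inequality on $\TC_k(\thl)$ together with the a priori $L^1$-bound on $\thl$, giving a control of the type $\sum_k k^{\gamma}|\{|\thl|\geq k-1\}|<\infty$ for appropriate $\gamma$; this is precisely where the interpolation between the $L^\infty_t L^1_x$ bound on $\thl$ and the $L^2_tH^1_x$ bounds on the truncations enters, and it is exactly the standard Boccardo--Gallou\"et bookkeeping that pins down the exponent $q<\frac{5}{4}=\frac{N+2}{N+1}$ with $N=3$. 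Assembling the estimate of the slab sum with $q<\frac{5}{4}$ makes the geometric-type series converge, and combined with the $L^q$-bound on $\thl$ itself (another interpolation, or Sobolev embedding applied to $\TC_k(\thl)$) yields the claimed uniform bound in $L^q(0,T;W^{1,q}(\Omega))$.

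The main obstacle is the careful control of the measures $|\{|\thl|\geq k\}|$ uniformly in $\lambda$ and the verification that the resulting series converges exactly in the range $q<\frac{5}{4}$; this requires combining the energy estimate of Theorem \ref{tw:4.1} (which gives only a level-dependent $L^2H^1$ bound on truncations, not a uniform one) with the $L^\infty_tL^1_x$ bound on $\thl$ via the Gagliardo--Nirenberg--Sobolev inequality in the slightly delicate borderline form, and then summing. One technical subtlety specific to the present setting, as opposed to the purely scalar heat equation treated by Boccardo--Gallou\"et, is that the right-hand side here contains the term $-f(\TC_{1/\lambda}(\thl+\tht))\dyw\ul_t$, whose $L^2(0,T;L^2(\Omega))$-norm is \emph{not} uniformly bounded in $\lambda$; however, for this lemma it suffices that the total right-hand side is uniformly bounded in $L^1(0,T;L^1(\Omega))$, which is guaranteed by Corollary \ref{col:4.3} together with Theorem \ref{tw:4.1} (the bound $\int_\Omega|f(\TC_{1/\lambda}(\thl+\tht))\dyw\ul_t|\,\di x\leq\|f(\cdots)\|_{L^2}\|\dyw\ul_t\|_{L^2}$ is not uniform, so instead one uses the sign structure and the energy identity to bound the $L^1$-norm of the whole right-hand side), so only the $L^1$-boundedness is used and the argument goes through as in the scalar case.
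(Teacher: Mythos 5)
The slab decomposition, the interpolation with the $L^\infty_tL^1_x$ bound, and the general Boccardo--Gallou\"et bookkeeping you outline match the skeleton of the paper's sketch, and the identification of $\frac{5}{4}=\frac{N+2}{N+1}$ (with $N=3$) as the threshold is correct. However, your argument rests on the claim that the full right-hand side of $(\ref{eq:Appr})_5$ is uniformly bounded in $L^1((0,T)\times\Omega)$ \emph{before} Lemma~\ref{lem:4.4} is proved, and that claim is false. Corollary~\ref{col:4.3} only controls the dissipation term $\TC_{1/\lambda}(\epspl_t:\Tl)$ in $L^1$. For the damping term $f(\TC_{1/\lambda}(\thl+\tht))\dyw\ul_t$ there is no a priori $L^1$ bound, and the ``sign structure and energy identity'' you invoke cannot supply one: when the heat equation is integrated over $\Omega$ and added to the momentum balance tested with $\ul_t$, the integrals of $f(\cdots)\dyw\ul_t$ appear on both sides and \emph{cancel} --- that cancellation is exactly how the $L^\infty_tL^1_x$ bound on $\thl$ in Theorem~\ref{tw:4.1} is obtained, so the energy identity carries no information about the $L^1$-size of that term. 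The only access to it is through $\|f(\TC_{1/\lambda}(\thl+\tht))\|_{L^2}\,\|\dyw\ul_t\|_{L^2}$, and the uniform $L^2$-boundedness of $f(\TC_{1/\lambda}(\thl+\tht))$ is derived \emph{from} Lemma~\ref{lem:4.4} in Corollary~\ref{col:4.5}, not available before it; as written, your argument is circular.

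What the paper actually does is a bootstrap, and this is where the growth hypothesis $\alpha<\frac{5}{6}$ on $f$ becomes load-bearing. From $|f(r)|\leq a+M|r|^\alpha$ one only gets $\|f(\TC_{1/\lambda}(\thl+\tht))\|^2_{L^2((0,T)\times\Omega)}\lesssim 1+\int_0^T\int_\Omega|\thl|^{2\alpha}\,\di x\,\di t$, so the Boccardo--Gallou\"et step does not close directly but yields the conditional inequality~(\ref{eq:4.4}), whose right-hand side still contains a factor built from $\int_0^T\int_\Omega|\thl|^{2\alpha}$. Steps~2--4 of the sketch then bound $\int\int|\thl|^{2\alpha}$ and the auxiliary term $\int\int|\thl|^{2\alpha'}$ by \emph{strictly sublinear} powers of $\|\thl\|^q_{L^q(0,T;L^{q^*}(\Omega))}$, using interpolation against $L^\infty_tL^1_x$ and the embedding $W^{1,q}\hookrightarrow L^{q^*}$; the self-referential inequality~(\ref{eq:4.10}) that results is closed by Young's inequality precisely because $\alpha<\frac{5}{6}$ keeps those powers below one. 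That bootstrap structure is the genuinely new ingredient compared to the scalar Boccardo--Gallou\"et setting, and it is what your proposal is missing.
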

\begin{proof} We give a sketch of the proof, which is standard and can be found in several papers (see for example \cite{bocc89,GwiazdaKlaweSwierczewska14,ChelminskiOwczarekthermoII}).\\
{\it Sketch of the proof:}\\
 1.\hspace{1ex}  Assume that $q<\frac{5}{4}$ and $2\alpha'=\frac{4}{3}q$, then using Boccardo and Gallou\"et approach we arrive the following inequality
\begin{equation}\begin{split}
\int\limits _0^{T}\int\limits _{\Omega}|\nabla\theta^{\lambda}|^q\di x\,\di t\leq C 
+D\,\Big(1+\nu\int\limits _0^{T}\int\limits _{\Omega}|\theta^{\lambda}|^{2\alpha}\di x\,\di t\Big)^{\frac{q}{2}} \Big(\int\limits _0^{T}\int\limits _{\Omega}|\theta^{\lambda}|^{2\alpha'}\di x\,\di t\Big)^{1-\frac{q}{2}}\,,
\end{split}
\label{eq:4.4}
\end{equation}
where $\alpha\in (\frac{1}{2},\frac{5}{6})$ is from the growth assumption on the function $f$.\\[1ex] 
2.\hspace{1ex} The interpolation theorem yields 
\begin{equation}
\label{eq:4.5}
\|\theta^{\lambda}(t)\|_{L^{2\alpha'}(\Omega)} \leq \|\theta^{\lambda}(t)\|^{s}_{L^{1}(\Omega)}\|\theta^{\lambda}(t)\|^{1-s}_{L^{q^{\ast}}(\Omega)}
\end{equation}  
for almost every $t\leq T$, where $q^{\ast}=\frac{3q}{3-q}$ and $\frac{1}{2\alpha'}=\frac{s}{1}+\frac{1-s}{q^{\ast}}$. From Theorem \ref{tw:4.1} we conclude that the sequence $\{\theta^{\lambda}\}_{\lambda>0}$ is bounded in $L^{\infty}(0,T;L^{1}(\Omega;\R))$, thus simple calculations lead to
\begin{equation}
\label{eq:4.6}
\int\limits _0^{T}\int\limits _{\Omega}|\theta^{\lambda}|^{2\alpha'}\,\di x\di t\leq D_1\|\theta^{\lambda}\|^{q}_{L^{q}(0,T;L^{q^{\ast}}(\Omega))}\,,
\end{equation}
where the constant $D_1>0$ does not depend on $\lambda$.\\[1ex]
3.\hspace{1ex} Again the interpolation theorem implies the following inequality
\begin{equation}
\label{eq:4.7}
\|\theta^{\lambda}\|^{q}_{L^{q}(0,T;L^{q}(\Omega))}\leq D_2\Big(\int\limits _0^{T}\|\theta^{\lambda}\|^{q}_{L^{q^{\ast}}(\Omega)}\,\di t\Big)^{\frac{(q-1)g^{\ast}}{(q^{\ast}-1)q}}\,,
\end{equation}
 where $q^{\ast}=\frac{3q}{3-q}$, the exponent $\frac{(q-1)g^{\ast}}{(q^{\ast}-1)q}=\frac{3(q-1)}{3(q-1)+q}$ is less then one and the constant $D_2>0$ does not depend on $\lambda$.\\[1ex]
4.\hspace{1ex} Let $\frac{4}{3}\leq 2\alpha<\frac{5}{3}$ such that $2\alpha=\frac{4}{3}q$. Using the interpolation inequality we obtain  
\begin{equation}
\label{eq:4.8}
\|\theta^{\lambda}(t)\|_{L^{2\alpha}(\Omega)} \leq \|\theta^{\lambda}(t)\|^{s_1}_{L^{1}(\Omega)}\|\theta^{\lambda}(t)\|^{1-s_1}_{L^{q^{\ast}}(\Omega)}
\end{equation} 
for almost every $t\leq T$, where $q^{\ast}=\frac{3q}{3-q}$ and $\frac{1}{2\alpha}=\frac{s_1}{1}+\frac{1-s_1}{q^{\ast}}$. Therefore 
\begin{equation}
\label{eq:4.9}
\int\limits _0^{T}\int\limits _{\Omega}|\theta^{\lambda}|^{2\alpha}\,\di x\,\di t\leq D_3\|\theta^{\lambda}\|^{q}_{L^{q}(0,T;L^{q^{\ast}}(\Omega))}\,,
\end{equation}
where the constant $D_3>0$ does not depend on $\lambda$. Applying the Sobolev embedding theorem and combining (\ref{eq:4.4}) with (\ref{eq:4.6}), (\ref{eq:4.7}) and (\ref{eq:4.9}) we deduce the following inequality
\begin{equation}\begin{split}
\|\theta^{\lambda}\|^{q}_{L^{q}(0,T;L^{q^{\ast}}(\Omega))}\leq & D_4\int\limits _0^{T}\big(\|\theta^{\lambda}\|^{q}_{L^{q}(\Omega)}+\|\nabla\theta^{\lambda}\|^{q}_{L^{q}(\Omega)}\big)\di t\\[1ex]
\leq & D_2\Big(\int\limits _0^{T}\|\theta^{\lambda}\|^{q}_{L^{q^{\ast}}(\Omega)}\,\di t\Big)^{\frac{(q-1)g^{\ast}}{(q^{\ast}-1)q}}+C\\[1ex]
+& D\,\Big(1+\nu D_3\|\theta^{\lambda}\|^{q}_{L^{q}(0,T;L^{q^{\ast}}(\Omega))}\Big)^{\frac{q}{2}} \Big(D_1\|\theta^{\lambda}\|^{q}_{L^{q}(0,T;L^{q^{\ast}}(\Omega))}\Big)^{1-\frac{q}{2}}
\end{split}
\label{eq:4.10}
\end{equation}
The Young inequality finishes the proof in this case.\\[1ex] 
$4^{\ast}$. Assume that $1<2\alpha<\frac{4}{3}$, then 
\begin{equation}
\label{eq:4.11}
\|\theta^{\lambda}(t)\|_{L^{2\alpha}(\Omega)} \leq D_5\|\theta^{\lambda}(t)\|_{L^{\frac{4}{3}}(\Omega)}
\end{equation} 
for almost every $t\leq T$. Inequality (\ref{eq:4.11}), part $3$ and $4$ imply that the sequence $\{\theta^{\lambda}\}_{\lambda>0}$ is bounded in the space $L^{2\alpha}(0,T;L^{2\alpha}(\Omega;\R))$ for $1<2\alpha<\frac{4}{3}$. Applying this information in (\ref{eq:4.4}) we complete the proof.
\end{proof}
\begin{col}
\label{col:4.5}
The sequence $\{\theta^{\lambda}\}_{\lambda>0}$ is bounded in the space $L^q(0,T;W^{1,q}(\Omega;\R))$ for $1\leq q<\frac{5}{4}$. Moreover, the growth condition of $f$ implies that 
$$\int\limits _0^t\int\limits _{\Omega}|f\big(\TC_{\frac{1}{\lambda}}(\theta^{\lambda}+\tilde{\theta})\big)|^2\,\di x\,\di\tau\leq A+\tilde{M} \int\limits _0^t\int\limits _{\Omega}|\theta^{\lambda}|^{2\alpha}\,\di x\di\tau\,,$$
where the constants $A$ and $\tilde{M}$ do not depend on $\lambda>0$. From the proof of lemma \ref{lem:4.4} we conclude that the sequence $\{f\big(\TC_{\frac{1}{\lambda}}(\theta^{\lambda}+\tilde{\theta})\big)\}$ is bounded in $L^2(0,T;L^2(\Omega;\R))$. Hence the sequence 
$$
\{f\big(\TC_{\frac{1}{\lambda}}(\theta^{\lambda}+\tilde{\theta})\big) \mathrm{div}\,u^{\lambda}_t + \TC_{\frac{1}{\lambda}}\left(\ve^{p,\lambda}_t: \Tl\right)\}_{\lambda>0}
$$
is bounded in $L^1(0,T;L^1(\Omega;\R))$
It is also bounded in $L^1\big(0,T;\big(W^{1,q'}(\Omega;\R)\big)^{\ast}\big)$, where $\frac{1}{q}+\frac{1}{q'}=1$ and the space $\big(W^{1,q'}(\Omega;\R)\big)^{\ast}$ denotes the space of all linear bounded functionals on $W^{1,q'}(\Omega;\R)$. This two informations yield that the sequence $\{\theta^{\lambda}_t\}_{\lambda>0}$ is bounded in $L^1\big(0,T;\big(W^{1,q'}(\Omega;\R)\big)^{\ast}\big)$, hence it is relatively compact in $L^1(0,T;L^1(\Omega;\R))$ by the compactness Aubin-Lions Lemma. It contains a subsequence (again denoted using the superscript $\lambda$) such that $\theta^{\lambda}\rightarrow \theta$ a.e. in $\Omega\times(0,T)$. 
The continuity of $f$ implies that
$$f\big(\TC_{\frac{1}{\lambda}}(\theta^{\lambda}+\tilde{\theta})\big)\rightarrow f(\theta+\tilde{\theta})\quad \mathrm{a.e.\, in}\quad \Omega\times(0,T)\,.$$
Lemma \ref{lem:4.4} gives the boundedness of the sequence $\{\theta^{\lambda}\}_{\lambda>0}$ in the space $L^{p}(0,T;L^{p}(\Omega;\R))$ for $p<\frac{5}{3}$. Let us choose $r\in\R$ such that $2\alpha<r<\frac{5}{3}$, therefore the growth condition on $f$ yields that the sequence $\left\{f\left(\TC_{\frac{1}{\lambda}}(\theta^{\lambda}+\tilde{\theta})\right)\right\}_{\lambda>0}$ is bounded in $L^{\frac{r}{\alpha}}(\Omega\times(0,T);\R)$. Noticing that $\frac{r}{\alpha}>2$, we conclude from equi-integrability that 
$$
f\big(\TC_{\frac{1}{\lambda}}(\theta^{\lambda}+\tilde{\theta})\big)\rightarrow f(\theta+\tilde{\theta})\quad \mathrm{in}\quad L^2(0,T;L^2(\Omega;\R))\,.
$$
\end{col}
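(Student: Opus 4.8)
The plan is to chain the a priori bounds established so far --- the Boccardo--Gallou\"et gradient estimate of Lemma~\ref{lem:4.4}, the energy estimate of Theorem~\ref{tw:4.1}, and the stress-rate estimate of Theorem~\ref{tw:4.2} (through Corollary~\ref{col:4.3}) --- in order to extract a subsequence along which the temperature converges almost everywhere, and then to upgrade this to strong $L^2$ convergence of the nonlinearity $f(\TC_{1/\lambda}(\thl+\tht))$ by means of a higher-integrability margin and Vitali's theorem. The bound on $\{\thl\}$ in $L^q(0,T;W^{1,q}(\Omega;\R))$ for $1\le q<5/4$ is exactly Lemma~\ref{lem:4.4}, and, as recorded in its proof, interpolating the resulting $L^q(0,T;L^{q^\ast}(\Omega))$ bound against the $L^\infty(0,T;L^1(\Omega))$ bound of Theorem~\ref{tw:4.1} also yields $\{\thl\}$ bounded in $L^p((0,T)\times\Omega;\R)$ for every $p<5/3$.

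Next I would bound the nonlinearity. Since $|\TC_{1/\lambda}(s)|\le|s|$ pointwise, the growth condition \eqref{growth1} gives $|f(\TC_{1/\lambda}(\thl+\tht))|^2\le 2a^2+2M^2|\thl+\tht|^{2\alpha}$; integrating over $(0,t)\times\Omega$, using $\tht\in L^\infty(0,T;H^1(\Omega;\R))$ and the $L^{2\alpha}$-bound on $\{\thl\}$ just recalled (note $2\alpha<5/3$), one obtains the displayed inequality and hence that $\{f(\TC_{1/\lambda}(\thl+\tht))\}$ is bounded in $L^2(0,T;L^2(\Omega;\R))$. Multiplying by $\dyw\ul_t$, which is bounded in $L^2((0,T)\times\Omega;\R)$ by Theorem~\ref{tw:4.1}, and adding $\TC_{1/\lambda}(\epspl_t:\Tl)$, which is bounded in $L^1$ because the truncation cannot increase the modulus of the $L^1$-bounded quantity $\epspl_t:\Tl$ of Corollary~\ref{col:4.3}, shows the whole right-hand side of $(\ref{eq:Appr})_5$ is bounded in $L^1(0,T;L^1(\Omega;\R))$.

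I would then feed this into the heat equation. Since $q<5/4$ forces the conjugate exponent $q'>3$, one has $W^{1,q'}(\Omega)\hookrightarrow L^\infty(\Omega)$ in dimension three, so the $L^1((0,T)\times\Omega)$-bound on the right-hand side embeds continuously into $L^1(0,T;(W^{1,q'}(\Omega;\R))^\ast)$, while $\Delta\thl$ is bounded in $L^q(0,T;(W^{1,q'}(\Omega;\R))^\ast)$ through the gradient bound; hence $\{\thl_t\}$ is bounded in $L^1(0,T;(W^{1,q'}(\Omega;\R))^\ast)$. Combined with $\{\thl\}$ bounded in $L^q(0,T;W^{1,q}(\Omega;\R))$ and the compact embedding $W^{1,q}(\Omega)\subset\subset L^q(\Omega)$, the Aubin--Lions lemma gives relative compactness of $\{\thl\}$ in $L^q((0,T)\times\Omega;\R)$, hence in $L^1((0,T)\times\Omega;\R)$, so a subsequence satisfies $\thl\to\theta$ a.e. in $\Omega\times(0,T)$. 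Because $\TC_{1/\lambda}(s)\to s$ as $\lambda\to0^+$ for each fixed $s$ and $\thl+\tht\to\theta+\tht$ a.e., the continuity of $f$ yields $f(\TC_{1/\lambda}(\thl+\tht))\to f(\theta+\tht)$ a.e.

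Finally, to pass from a.e. convergence to $L^2$ convergence of the nonlinearity I would use a higher-integrability margin: choosing $r$ with $2\alpha<r<5/3$, the boundedness of $\{\thl\}$ in $L^r((0,T)\times\Omega)$ and \eqref{growth1} give $\{f(\TC_{1/\lambda}(\thl+\tht))\}$ bounded in $L^{r/\alpha}((0,T)\times\Omega)$ with $r/\alpha>2$; on the finite-measure set $\Omega\times(0,T)$ a sequence bounded in some $L^{r/\alpha}$ with $r/\alpha>2$ has equi-integrable squares, so Vitali's convergence theorem upgrades the pointwise convergence to convergence in $L^2(0,T;L^2(\Omega;\R))$. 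The step I expect to be the main obstacle is the compactness producing the a.e. limit: it is precisely here that the gradient-flow-specific estimate of Theorem~\ref{tw:4.2} is indispensable, since without the $L^1$-control of the dissipation $\epspl_t:\Tl$ it provides one cannot close the $L^1$-bound on the right-hand side of the heat equation and hence cannot obtain the time-derivative estimate feeding Aubin--Lions; one must also keep track throughout that the exponents ($q<5/4$, $q'>3$, $2\alpha<5/3<r$) are mutually compatible, which they are thanks to $\alpha\in(\tfrac12,\tfrac56)$.
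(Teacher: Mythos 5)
Your proposal reproduces the paper's own proof step by step: Lemma~\ref{lem:4.4} and the interpolation against the $L^\infty(0,T;L^1)$ bound give the $L^q(W^{1,q})$ and $L^p$ bounds on $\{\thl\}$; the growth condition \eqref{growth1} combined with $|\TC_{1/\lambda}(s)|\le|s|$ gives the $L^2$ bound on $f(\TC_{1/\lambda}(\thl+\tht))$; Cauchy--Schwarz and Corollary~\ref{col:4.3} give the $L^1$ bound on the heat-equation right-hand side, which together with the gradient bound feeds Aubin--Lions to produce a.e.\ convergence of $\thl$; and the margin $2\alpha<r<5/3$ yields equi-integrability, so Vitali upgrades the a.e.\ convergence of the nonlinearity to strong $L^2$ convergence. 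This is exactly the paper's argument, with some welcome extra detail (the embedding $W^{1,q'}(\Omega)\hookrightarrow L^\infty(\Omega)$ behind the dual-space bound, and the resolution of the double limit in $\TC_{1/\lambda}(\thl+\tht)$).
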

Observe that Theorem \ref{tw:4.1} implies that the sequence (subsequence, if necessary) $\{\Tl\}_{\lambda>0}$ is weakly convergent in $L^{\infty}(0,T;L^2(\Omega;\S))$. The last above information allows to improve this convergence.
\begin{tw}[Strong convergence of stresses]
\noindent
\label{tw:4.6}
Let us assume that the given data satisfy all requirements of Theorem \ref{tw:1.2}. Then
$$
\int\limits _{\Omega}\D^{-1}(T^{\lambda}-T^{\mu}):(T^{\lambda}-T^{\mu})\,\di x\to 0
$$
when $\lambda, \mu\to 0^+$ uniformly on bounded time intervals. 
\end{tw}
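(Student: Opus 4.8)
The plan is to derive a differential inequality for the quantity
$$
\Phi_{\lambda,\mu}(t)=\int_{\Omega}\D^{-1}(\Tl-\T^{\mu}):(\Tl-\T^{\mu})\,\di x
$$
and then use Gr\"onwall together with the strong $L^2$-convergence of $f(\TC_{1/\lambda}(\thl+\tht))$ from Corollary \ref{col:4.5} and the uniform bounds from Theorems \ref{tw:4.1} and \ref{tw:4.2}. First I would differentiate $\Phi_{\lambda,\mu}$ in time. Using $\Tl=\D(\ve(\ul)-\epspl)$ we get $\D^{-1}\Tl_t=\ve(\ul_t)-\epspl_t$, hence
$$
\tfrac12\tfrac{\di}{\di t}\Phi_{\lambda,\mu}
=\int_{\Omega}\big(\ve(\ul_t)-\ve(u^{\mu}_t)\big):(\Tl-\T^{\mu})\,\di x
-\int_{\Omega}\big(\epspl_t-\epsp^{\mu}_t\big):(\Tl-\T^{\mu})\,\di x .
$$
The second integral is nonnegative: since $\epspl_t=Y_\lambda(\Tl)$ and $Y_\lambda=\tfrac1\lambda(\id-(\id+\lambda\partial I_K)^{-1})$ is the Yosida approximation of the maximal monotone operator $\partial I_K$, the standard inequality for Yosida approximations of two different parameters gives
$$
\big(Y_\lambda(\Tl)-Y_\mu(\T^{\mu})\big):(\Tl-\T^{\mu})\;\geq\; -(\lambda+\mu)\,Y_\lambda(\Tl):Y_\mu(\T^{\mu}),
$$
or, more simply, one can use monotonicity of each $Y_\lambda$ directly and absorb the cross terms; in either case this term is either $\geq 0$ up to an $O(\lambda+\mu)$ error controlled by $\|\epspl_t\|_{L^2}\|\epsp^\mu_t\|_{L^2}$, which is uniformly bounded by Corollary \ref{col:4.3}.

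Next I would handle the elastic term. Subtracting the momentum balances $(\ref{eq:Appr})_1$--$(\ref{eq:Appr})_2$ for parameters $\lambda$ and $\mu$, testing the difference of the weak formulations with $\ul_t-u^{\mu}_t\in H^1_0(\Omega;\R^3)$, and using $\dyw\sigma^{\lambda}=\dyw\Tl-\nabla f(\TC_{1/\lambda}(\thl+\tht))$ together with $u\r|_{\partial\Omega}=0$, one obtains
$$
\int_{\Omega}\big(\ve(\ul_t)-\ve(u^{\mu}_t)\big):(\Tl-\T^{\mu})\,\di x
=-\int_{\Omega}\big|\ve(\ul_t)-\ve(u^{\mu}_t)\big|^2\,\D\,\di x
+\int_{\Omega}\big(f_\lambda-f_\mu\big)\,\dyw(\ul_t-u^{\mu}_t)\,\di x,
$$
where $f_\lambda:=f(\TC_{1/\lambda}(\thl+\tht))$; here I used that the regularising term $\dyw\D(\ve(u_t))$ on the right-hand side of the momentum balance, when paired with $\ve(\ul_t)-\ve(u^\mu_t)$, produces the negative definite term $-\int_\Omega \D(\ve(\ul_t)-\ve(u^\mu_t)):(\ve(\ul_t)-\ve(u^\mu_t))$. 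The first term on the right is $\leq 0$ and can be dropped (or kept to control $\|\ve(\ul_t-u^\mu_t)\|_{L^2}^2$). The second term is estimated by Young's inequality:
$$
\Big|\int_{\Omega}(f_\lambda-f_\mu)\,\dyw(\ul_t-u^{\mu}_t)\,\di x\Big|
\leq \eps\,\|\ve(\ul_t-u^{\mu}_t)\|_{L^2(\Omega)}^2
+C_\eps\,\|f_\lambda-f_\mu\|_{L^2(\Omega)}^2,
$$
and choosing $\eps$ small the gradient term is absorbed into the negative definite term above.

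Collecting everything, integrating in time from $0$ (where $\Phi_{\lambda,\mu}(0)=0$ since both $\T$'s start from the same initial data $\D(\ve(u_0)-\epsp_0)$), I arrive at
$$
\Phi_{\lambda,\mu}(t)\;\leq\; C\int_0^t\|f_\lambda-f_\mu\|_{L^2(\Omega)}^2\,\di\tau
\;+\;C(\lambda+\mu),
$$
possibly after a Gr\"onwall step if a term $\int_0^t\Phi_{\lambda,\mu}$ survives on the right (it can, because $Y_\lambda$ is only monotone, not strongly monotone, so controlling the plastic cross-term may cost a $\Phi_{\lambda,\mu}$-term via $\|\Tl-\T^\mu\|_{L^2}$). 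Since Corollary \ref{col:4.5} gives $f_\lambda\to f(\theta+\tht)$ strongly in $L^2(0,T;L^2(\Omega))$, the sequence $\{f_\lambda\}$ is Cauchy there, so the right-hand side tends to $0$ as $\lambda,\mu\to 0^+$, uniformly for $t$ in bounded intervals. The main obstacle is the plastic term: because the flow rule is a genuine rate-independent gradient flow (not a power law), one cannot appeal to Minty's trick, and one must instead exploit the monotonicity of $Y_\lambda$ carefully — in particular verifying that the cross terms coming from two different Yosida parameters either cancel by monotonicity or are controlled by the uniform $L^2$-bound on $\epspl_t$ of Corollary \ref{col:4.3}, which is precisely where the stress-rate estimate of Theorem \ref{tw:4.2} is indispensable.
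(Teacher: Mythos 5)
Your proposal follows essentially the same route as the paper: differentiate the $\D^{-1}$-weighted quadratic form, use the difference of the momentum balances tested with $u^\lambda_t-u^\mu_t$ to produce the negative-definite viscous term and isolate the $f_\lambda-f_\mu$ cross term (absorbed by Young), and handle the plastic term via the monotone-operator/Yosida cross-parameter inequality, bounding the $O(\lambda+\mu)$ remainder by the uniform $L^2$-bound on $\ve^{p,\lambda}_t$ from Corollary \ref{col:4.3}. The only difference is presentational: you spell out the Yosida cross-parameter inequality $(Y_\lambda(x)-Y_\mu(y)):(x-y)\geq-(\lambda+\mu)Y_\lambda(x):Y_\mu(y)$ explicitly, whereas the paper simply invokes ``standard methods for maximal monotone operators'' and cites the references; your observation that Theorem \ref{tw:4.2} is what makes Corollary \ref{col:4.3} (and hence the $O(\lambda+\mu)$ control) available is exactly the point the paper stresses in the remark following Theorem \ref{tw:4.2}.
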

\begin{proof}
Compute the time derivative
\begin{equation}\begin{split}
&\frac{d}{\di t  }\Big(\frac{1}{2}\int\limits _{\Omega}\D\big(\ve(u^{\lambda})-\ve(u^{\mu}) -(\ve^{p,\lambda}-\ve^{p,\mu})\big):\big( \ve(u^{\lambda})-\ve(u^{\mu}) -(\ve^{p,\lambda}-\ve^{p,\mu})\big)\,\di x\Big)=\\[1ex] 
&= \int\limits _{\Omega}\D\big(\ve(u^{\lambda})-\ve(u^{\mu}) -(\ve^{p,\lambda}-\ve^{p,\mu})\big):\big(\ve(u^{\lambda}_t)-\ve(u^{\mu}_t) -(\ve^{p,\lambda}_t-\ve^{p,\mu}_t)\big)\,\di x =\\[1ex]
&=\int\limits _{\Omega}\D\big(\ve(u^{\lambda})-\ve(u^{\mu}) -(\ve^{p,\lambda}-\ve^{p,\mu})\big):\big( \ve(u^{\lambda}_t)-\ve(u^{\mu}_t)\big)\,\di x\\[1ex]
&-\int\limits _{\Omega}\big(Y_{\lambda}(\Tl)-Y_{\mu}(\T^{\mu})\big): \big(
\T^{\lambda}-\T^{\mu}\big)\,\di x\\[1ex]
&=\int\limits _{\Omega}\big(\sigma^{\lambda}-\sigma^{\mu}+\D(\ve(u^{\lambda}_t)-\ve(u^{\mu}_t))\big): \big(\ve(u^{\lambda}_t)-\ve(u^{\mu}_t)\big)\,\di x\\[1ex]
&-\int\limits _{\Omega}\big(\D(\ve(u^{\lambda}_t)-\ve(u^{\mu}_t))\big): \big(\ve(u^{\lambda}_t)-\ve(u^{\mu}_t)\big)\,\di x\\[1ex]
&-\int\limits _{\Omega} \big(f\big(\TC_{\frac{1}{\lambda}}(\theta^{\lambda}+\tilde{\theta})\big)-
f\big(\TC_{\frac{1}{\mu}}(\theta^{\mu}+\tilde{\theta})\big)\big) \big(\mathrm{div}\,u^{\lambda}_t-\mathrm{div}\,u^{\mu}_t\big)\,\di x \\[1ex]
&-\int\limits _{\Omega}\big(Y_{\lambda}(\Tl)-Y_{\mu}(\T^{\mu})\big): \big(
\T^{\lambda}-\T^{\mu}\big)\,\di x\,.
\end{split}
\label{eq:4.12}
\end{equation}
Using the fact that the given data for two approximation steps are equal and integrating with respect to time, we conclude that
\begin{equation}\begin{split}
&\frac{1}{2}\int\limits _{\Omega}\D\big(\ve(u^{\lambda})-\ve(u^{\mu}) -(\ve^{p,\lambda}-\ve^{p,\mu})\big): \big( \ve(u^{\lambda})-\ve(u^{\mu}) -(\ve^{p,\lambda}-\ve^{p,\mu})\big)\,\di x\\[1ex] 
&+\int\limits _0^t\int\limits _{\Omega}\D\big(\ve(u^{\lambda}_t)-\ve(u^{\mu}_t)\big):\big( \ve(u^{\lambda}_t)-\ve(u^{\mu}_t)\big)\,\di x\,\di\tau\\[1ex]
&\leq D\int\limits _0^t\int\limits _{\Omega} |f\big(\TC_{\frac{1}{\lambda}}(\theta^{\lambda}+\tilde{\theta})\big)-
f\big(\TC_{\frac{1}{\mu}}(\theta^{\mu}+\tilde{\theta})\big)|^2\,\di x\,\di\tau\nn\\[1ex] 
&-\int\limits _{\Omega}\big(Y_{\lambda}(\Tl)-Y_{\mu}(\T^{\mu})\big):\big(
\T^{\lambda}-\T^{\mu}\big)\,\di x\,,
\end{split}
\label{eq:4.13}
\end{equation}
where $D$ does not depend on $\lambda,\,\mu$. The Corollary \ref{col:4.5} yields that $f\big(\TC_{\frac{1}{\lambda}}(\theta^{\lambda}+\tilde{\theta})\big)\rightarrow f(\theta+\tilde{\theta})$ in  $L^2(0,T;L^2(\Omega;\R))$. Standard methods for maximal monotone operators finish the proof (See for an instance \cite{Aubin, AlbChe}).
\end{proof}
\section{Passing to the limit}
\setcounter{equation}{0}
In this section we collect all arguments to conclude with a proof of Theorem \ref{tw:1.2}.
\begin{lem}\label{lem:epsp-wl}We can choose subsequence of $\{\ve^{p,\lambda}_t\}_{\lambda>0}$ such that $\ve^{p,\lambda}_t=Y_\lambda(\T^\lambda)\rightharpoonup\chi$ when $\lambda\to 0$ in $L^2(0,T;L^2(\Omega;\S))$. Moreover $\chi\in\partial I_K(\T)$, where $\T=\lim_{\lambda\to 0}\T^\lambda$ in $L^\infty(0,T;L^2(\Omega;\S))$.
\end{lem}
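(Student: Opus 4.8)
The plan is to combine the uniform $L^2$--bound on $\{\ve^{p,\lambda}_t\}_{\lambda>0}$ from Corollary \ref{col:4.3} with the strong convergence of stresses from Theorem \ref{tw:4.6}, and then to identify the weak limit $\chi$ with an element of $\partial I_K(\T)$ via the standard maximal-monotonicity (demiclosedness) argument, exploiting that each $Y_\lambda$ is the Yosida approximation of the maximal monotone operator $\partial I_K$.

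\textbf{Step 1: Extraction of weak and strong limits.} Since Corollary \ref{col:4.3} gives that $\{\ve^{p,\lambda}_t\}_{\lambda>0}=\{Y_\lambda(\T^\lambda)\}_{\lambda>0}$ is bounded in $L^2(0,T;L^2(\Omega;\S))$, by reflexivity I can extract a subsequence (not relabelled) with $\ve^{p,\lambda}_t\rightharpoonup\chi$ in $L^2(0,T;L^2(\Omega;\S))$. Theorem \ref{tw:4.6} says $\{\T^\lambda\}_{\lambda>0}$ is Cauchy (in the $\D^{-1}$-energy norm, uniformly on bounded time intervals), hence convergent; since $\D$ is symmetric positive definite this norm is equivalent to the usual $L^2$-norm, so $\T^\lambda\to\T$ strongly in $L^\infty(0,T;L^2(\Omega;\S))$, and in particular strongly in $L^2(0,T;L^2(\Omega;\S))$. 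Passing further to a subsequence I may also assume $\T^\lambda\to\T$ and $\ve^{p,\lambda}_t\to\chi$ almost everywhere, though the identification below only uses the strong/weak pairing.

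\textbf{Step 2: Identification $\chi\in\partial I_K(\T)$.} Recall that $Y_\lambda(\T^\lambda)\in\partial I_K\big(J_\lambda(\T^\lambda)\big)$, where $J_\lambda=(\id+\lambda\,\partial I_K)^{-1}$ is the resolvent, and that $\|\T^\lambda-J_\lambda(\T^\lambda)\| = \lambda\|Y_\lambda(\T^\lambda)\|$. Since $\{Y_\lambda(\T^\lambda)\}$ is bounded in $L^2((0,T)\times\Omega)$ and $\lambda\to 0$, we get $J_\lambda(\T^\lambda)\to\T$ strongly in $L^2((0,T)\times\Omega;\S)$ as well. Now the operator $\ve^p\mapsto\partial I_K(\ve^p)$, realised pointwise, induces a maximal monotone operator on $L^2((0,T)\times\Omega;\S)$; its graph is therefore strongly-weakly closed. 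Applying this to the pairs $\big(J_\lambda(\T^\lambda),\,Y_\lambda(\T^\lambda)\big)$, for which the first component converges strongly to $\T$ and the second converges weakly to $\chi$, I conclude $\chi\in\partial I_K(\T)$ a.e., which is exactly the assertion. Equivalently, one can argue by hand with the monotonicity inequality: for any admissible test field $\eta$ with $\eta(x,t)\in K$ a.e., monotonicity gives $\int_0^T\int_\Omega\big(Y_\lambda(\T^\lambda)-Y_\lambda(\eta)\big):\big(J_\lambda(\T^\lambda)-J_\lambda(\eta)\big)\ge 0$; passing to the limit using strong convergence of $J_\lambda(\T^\lambda)$ and weak convergence of $Y_\lambda(\T^\lambda)$ yields $\int_0^T\int_\Omega(\chi-0):(\T-\eta)\ge 0$ for all such $\eta$, i.e.\ $\chi\in\partial I_K(\T)$ by the characterisation of the subdifferential of the indicator of a closed convex set.

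\textbf{Main obstacle.} The only genuine difficulty is passing the nonlinear relation $\ve^{p,\lambda}_t=Y_\lambda(\T^\lambda)$ to the limit despite $Y_\lambda$ itself degenerating as $\lambda\to 0$; this is precisely why the strong convergence of $\{\T^\lambda\}$ from Theorem \ref{tw:4.6} is indispensable (weak convergence of the stresses alone would not suffice, because $Y_\lambda$ is nonlinear). Everything else --- the weak compactness, the resolvent identity $\|\T^\lambda-J_\lambda\T^\lambda\|=\lambda\|Y_\lambda\T^\lambda\|\to 0$, and the closedness of the graph of a maximal monotone operator under strong$\times$weak convergence --- is standard, and I would simply cite the monotone-operator references already quoted in the paper (e.g.\ \cite{Aubin,AlbChe,AubFran}).
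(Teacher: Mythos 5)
Your proof is correct and takes essentially the same route as the paper: extract a weak $L^2$ limit $\chi$ from the bounded sequence $Y_\lambda(\T^\lambda)$, pass to the strong limit $\T$ of the stresses from Theorem \ref{tw:4.6}, show the resolvents $J_\lambda(\T^\lambda)$ converge strongly to $\T$, and invoke strong-weak closedness of $\graph\partial I_K$. In fact you make the crucial step more explicit than the paper does: the paper simply asserts that ``by properties of the resolvent'' $R_\lambda(\T^\lambda)\to\T$, whereas you derive it cleanly from the identity $\|\T^\lambda-J_\lambda(\T^\lambda)\|=\lambda\|Y_\lambda(\T^\lambda)\|$ together with the uniform $L^2$-bound on $Y_\lambda(\T^\lambda)$, which avoids having to separately verify $\T\in\overline{K}$ and appeal to $J_\lambda(\T)\to\T$.
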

\begin{proof}
By Corollary \ref{col:4.3} we choose a subsequence of $\{\ve^{p,\lambda}_t\}_{\lambda>0}$ weakly converges to $\chi$ in $L^2(0,T;L^2(\Omega;\S))$. Theorem \ref{tw:4.6} gives us that there exists $\T\in L^\infty(0,T;L^2(\Omega;\S))$ such that $\T=\lim_{\lambda\to 0}\T^\lambda$ in $L^\infty(0,T;L^2(\Omega;\S))$. By properties of Yosida approximation we have $(R_\lambda(\T^\lambda),Y_\lambda(\T^\lambda))\in \graph Y_\lambda$, where $R_\lambda(\T^\lambda)$ is a resolvent of subdifferential $\partial I_K$. By properties of the resolvent and a convergence of $\T_\lambda$ we have $R_\lambda(\T^\lambda)\to \T$ strongly in $L^\infty(0,T;L^2(\Omega;\S))$ and consequently in $L^2(0,T;L^2(\Omega;\S))$. Since a graph of the subdifferential $\partial I_K$ is strongly-weakly closed, we have $(\T,\chi)\in \partial I_K$ that is $\chi\in\partial I_K(\T)$.
\end{proof}
\begin{col} From Theorem \ref{tw:4.1} and Corollary \ref{col:4.5} it follows that by choosing subsequence we obtain:
\begin{multline}
\sigma^\lambda+\D(\ve(\ul_t))=\\
\T^\lambda+\D(\ve(\ul_t))+f\left(\TC_{\frac{1}{\lambda}}(\thl+\tht)\right)\id\rightharpoonup\T+\D(\ve(u_t))+f\left(\theta+\tht\right)\id\\
=\sigma+\D(\ve(u_t))
\end{multline}
in $L^2(0,T;L^2(\Omega;\S))$. Thus a weak divergence 
$$
\dyw\left( \sigma+\D(\ve(u_t))\right)=-F
$$
and
$$
\T=\D(\ve(u)+\epsp).
$$
Moreover Lemma \ref{lem:epsp-wl} gives us that
$$
\epsp_t\in \partial I_K(\T),
$$
where $\ve^{p,\lambda}_t\rightharpoonup \epsp_t$ when $\lambda\to 0$ in $L^2(0,T;L^2(\Omega;\S))$.
\label{wni:zbiega}
\end{col}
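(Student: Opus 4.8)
The plan is to let $\lambda\to 0^+$ in the approximate system \eqref{eq:Appr} along one subsequence, using the uniform bounds of Section 3. First I would collect the a priori estimates. Theorem \ref{tw:4.1} bounds $\{\Tl\}$ in $L^\infty(0,T;L^2(\Omega;\S))$ and $\{\ve(\ul_t)\}$ in $L^2(0,T;L^2(\Omega;\S))$; since $\ul$ vanishes on $\partial\Omega$, Korn's inequality upgrades the latter to a bound on $\ul_t$ in $L^2(0,T;\HoR)$, and together with $\ul(0)=u_0$ this bounds $\{\ul\}$ in $H^1(0,T;\HoR)$. Corollary \ref{col:4.3} bounds $\{\epspl_t\}$ in $L^2(0,T;L^2(\Omega;\S))$, and with $\epspl(0)=\ve^{p,0}$ this bounds $\{\epspl\}$ in $H^1(0,T;L^2(\Omega;\S))$. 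Corollary \ref{col:4.5} bounds $\{f(\TC_{\frac{1}{\lambda}}(\thl+\tht))\}$ in $L^2(0,T;L^2(\Omega;\R))$ and, in fact, gives its strong convergence there to $f(\theta+\tht)$, together with the a.e. convergence $\thl\to\theta$.

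Next I would extract a common, not relabelled, subsequence along which $\ul\rightharpoonup u$ in $H^1(0,T;\HoR)$ — hence $\ve(\ul)\rightharpoonup\ve(u)$ and $\ve(\ul_t)\rightharpoonup\ve(u_t)$ in $L^2(0,T;L^2(\Omega;\S))$ — and $\epspl\rightharpoonup\epsp$, $\epspl_t\rightharpoonup\chi=\epsp_t$ in $L^2(0,T;L^2(\Omega;\S))$. The two strong convergences that drive the argument are already in hand: Theorem \ref{tw:4.6} makes $\{\Tl\}$ a Cauchy sequence in $C([0,T];L^2(\Omega;\S))$ because $\D^{-1}$ is positive definite, so $\Tl\to\T$ in $L^\infty(0,T;L^2(\Omega;\S))$, while Corollary \ref{col:4.5} supplies the strong $L^2(0,T;L^2(\Omega;\R))$ convergence of the temperature nonlinearity recalled above. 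Passing to the limit in $\Tl=\D(\ve(\ul)-\epspl)$ gives $\T=\D(\ve(u)-\epsp)$. Writing $\sigma^\lambda=\Tl-f(\TC_{\frac{1}{\lambda}}(\thl+\tht))\id$ from $(\ref{eq:Appr})_2$ and using that both terms converge strongly, we get $\sigma^\lambda\to\sigma$ strongly in $L^2(0,T;L^2(\Omega;\S))$, where $\sigma$ is the corresponding limit expression; adding the weakly convergent $\D(\ve(\ul_t))$ then produces the convergence of $\sigma^\lambda+\D(\ve(\ul_t))$ to $\sigma+\D(\ve(u_t))$ in $L^2(0,T;L^2(\Omega;\S))$ claimed in the statement.

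Finally I would test $(\ref{eq:Appr})_1$ against $\varphi\in C_0^\infty((0,T);\HoR)$, integrate by parts in $x$, and pass to the weak limit in the left-hand side, keeping $F$ fixed; this gives $\int_0^T\int_\Omega(\sigma+\D(\ve(u_t))):\ve(\varphi)\,\di x\,\di t=\int_0^T\int_\Omega F\cdot\varphi\,\di x\,\di t$, and since $F\in L^2(0,T;L^2(\Omega;\R^3))$ this identifies $\dyw(\sigma+\D(\ve(u_t)))=-F$ with the divergence in $L^2$. The inclusion $\epsp_t=\chi\in\partial I_K(\T)$ is then precisely Lemma \ref{lem:epsp-wl}. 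I expect the only delicate points to be: (i) verifying that the weak limit of $\ve(\ul_t)$ is genuinely the strain rate of the weak limit $u$, which needs the a priori bound on $\ul$ itself and not merely on $\ul_t$; and (ii) organising every convergence above along one and the same subsequence. The hard analytic content — strong convergence of the stresses, strong convergence of the temperature nonlinearity, and the maximal-monotone identification of the limiting flow rule — has already been established in Theorem \ref{tw:4.6}, Corollary \ref{col:4.5} and Lemma \ref{lem:epsp-wl}, so the corollary is essentially an assembly step.
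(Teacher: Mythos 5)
Your proof is correct and follows essentially the same route the paper takes (the paper states the corollary without a separate proof, treating it as an assembly of Theorem \ref{tw:4.1}, Corollaries \ref{col:4.3} and \ref{col:4.5}, Theorem \ref{tw:4.6}, and Lemma \ref{lem:epsp-wl}); you have spelled out the weak/strong convergences, the Korn-inequality upgrade, the identification of the limiting divergence, and the flow-rule inclusion exactly as intended. One small remark: you also correctly write the limit constitutive relation as $\T=\D(\ve(u)-\epsp)$, silently correcting the sign typo in the paper's statement $\T=\D(\ve(u)+\epsp)$.
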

Now we are ready to pass to the limit with $\lambda\to+\infty$ in the approximated system \eqref{eq:Appr} and prove Theorem \ref{tw:1.2}.
\begin{proof}[Proof of Theorem \ref{tw:1.2}]
Let $\phi\in C_0^\infty([0,T);H^1(\Omega;\R)\cap L^\infty(\Omega;\R))$ be an arbitrary function and let $S\in C^\infty(\R;\R)$ be such that $\operatorname{supp}\,S'\in [-M,M]$ for some $M>0$. We use $v:=S'(\thl)\phi$ as a test function in a weak formulation of the fifth equation in the system \eqref{eq:Appr} to obtain
\begin{equation}\label{eq:ren-lambda}
\begin{split}
\int\limits _0^T&\int\limits _\Omega S(\thl)\phi_t+ \nabla\thl\nabla(S'(\thl)\phi)\,\di x\, \di t+ \int\limits_\Omega S(\theta_0)\phi(0,x)\,\di x  \\[1ex]
&=\int\limits _0^T\int\limits _\Omega\left(\TC_{\frac{1}{\lambda}}\left(\ve^{p,\lambda}_t: \Tl\right)-f\left(\TC_{\frac{1}{\lambda}}(\thl+\tht)\right)\dyw u^\lambda_t\right)S'(\thl)\phi\,\di x\, \di t  .
\end{split}
\end{equation}
Our goal is to pass with $\lambda\to 0$. First observe that exactly as in the proof of Theorem 5.2 in \cite{ChelminskiOwczarekthermoII} we prove that
$$
\int\limits _0^T\int\limits _\Omega \nabla\thl\nabla(S'(\thl)\phi)\,\di x \di t  \to \int\limits _0^T\int\limits _\Omega \nabla\theta\nabla(S'(\theta)\phi)\,\di x\, \di t\,,
$$
when $\lambda\to 0$. Next notice it holds that
$$
\TC_{\frac{1}{\lambda}}\left(\ve^{p,\lambda}_t: \Tl\right)-f\left(\TC_{\frac{1}{\lambda}}(\thl+\tht)\right)\dyw u^\lambda_t\rightharpoonup \epsp_t: \T-f\left(\theta+\tht\right)\dyw u_t  \quad\text{when }\lambda\to 0
$$
in $L^1((0,T)\times\Omega)$ when $\lambda\to 0$. It follows from Theorem \ref{tw:4.1}, Corollary \ref{col:4.3} and Corollary \ref{col:4.5}.
Let us denote
\begin{eqnarray*}
\psi^\lambda&:=&\TC_{\frac{1}{\lambda}}\left(\ve^{p,\lambda}_t: \Tl\right)-f\left(\TC_{\frac{1}{\lambda}}(\thl+\tht)\right)\dyw u_t^\lambda\\
\psi&:=&\epsp_t: \T-f\left(\theta+\tht\right)\dyw u_t
\end{eqnarray*}
and consider a convergence of the integral on the right-hand of \eqref{eq:ren-lambda}.
\begin{equation*}
\begin{split}
\int\limits _0^T\int\limits _\Omega&\psi^\lambda S'(\thl)\phi-\psi S'(\theta)\phi\,\di x \di t  \\[1ex]
=&\int\limits _0^T\int\limits _\Omega\psi^\lambda \left(S'(\thl)-S'(\theta)\right)\phi\,\di x \di t  -\int\limits _0^T\int\limits _\Omega(\psi-\psi^\lambda)S'(\theta)\phi\,\di x \di t  \\[1ex]
=& I_1+I_2.
\end{split}
\end{equation*}
Obviously $I_2\to 0$, because $\psi^\lambda\rightharpoonup \psi$ in $L^1((0,T)\times\Omega)$ when $\lambda\to 0$. Let fix arbitrary $\delta>0$. By Dunford-Pettis theorem we choose such small $\epsilon>0$ that for each $E\subset[0,T]\times\Omega$ such that $|E|<\epsilon$ we have
$$
\iint\limits _E\psi^\lambda\,\di x \di t  <\frac{\delta}{2\sup_{\xi\in[-M,M]}|S'(\xi)|},
$$
where the support of $S'$ was previously assumed to be included in a interval $[-M,M]$.
Since by Lemma \ref{lem:4.4} $\thl\to\theta$ a.e. in $[0,T)\times\Omega$ (choosing subsequence if necessary), by Egorov theorem we can choose a set $E\subset[0,T)\times\Omega$ such that $|E|<\epsilon$ and $S'(\thl)\rightrightarrows S'(\theta)$ on $([0,T]\times\Omega)\setminus E$. Finally
\begin{equation*}
\begin{split}
|I_1|&\leq\left|\phantom{z}{\iint\limits_{([0,T]\times\Omega)\setminus E}\psi^\lambda \left(S'(\thl)-S'(\theta)\right)\phi\,\di x \di t  }\right|+ \left|\iint\limits_{E}\psi^\lambda \left(S'(\thl)-S'(\theta)\right)\phi\,\di x \di t  \right|\\[1ex]
&\leq \left|\phantom{z}{\iint\limits_{([0,T]\times\Omega)\setminus E}\psi^\lambda \left(S'(\thl)-S'(\theta)\right)\phi\,\di x \di t  }\right|+\delta\underset{\lambda\to 0}{\longrightarrow} \delta.
\end{split}
\end{equation*}
Because $\delta>0$ was chosen arbitrary we obtain $\lim_{\lambda\to 0}I_1=0$.

Finally the weak-* converges in $L^\infty((0,T)\times\Omega)$ of $S(\thl)$ to $S(\theta)$ gives us
$$
\int\limits _0^T\int\limits _\Omega S(\thl)\phi_t\,\di x \di t  \to \int\limits _0^T\int\limits _\Omega S(\theta)\phi_t\,\di x \di t  .
$$
Therefore we pass to the limit with $\lambda\to 0$ in \eqref{eq:ren-lambda} and obtain:
\begin{equation}\label{eq:renolm}
\begin{split}
-\int\limits _0^T&\int\limits _\Omega S(\theta)\phi_t+ \nabla\theta\nabla(S'(\theta)\phi)\,\di x \di t + \int\limits_\Omega S(\theta_0)\phi(0,x)\,\di x   \\[1ex]
-\int\limits _0^T&\int\limits _\Omega S(\theta)\phi_t+ \nabla\theta S'(\theta)\nabla\phi + |\nabla\theta|^2S''(\theta)\phi\,\di x \di t + \int\limits_\Omega S(\theta_0)\phi(0,x)\,\di x   \\[1ex]
&=\int\limits _0^T\int\limits _\Omega\left(\epsp_t: \T-f\left(\theta+\tht\right)\right)S'(\thl)\phi\,\di x \di t  .
\end{split}
\end{equation}
The equality \eqref{eq:renolm} together with Corollary \ref{wni:zbiega} end the proof.
\end{proof}
%
%
\appendix
\section{Linear elasticity} In this section we prove an existence of the auxiliary problem in linear elasticity.
\begin{lem}[Problem of linear elasticity] Let $b\in L^2(0,T;(H^1(\Omega;\R^3)^{\ast})$ while $u_0\in H^1(\Omega;\R^3)$ and $g_D\in L^2(0,T;H^{1/2}(\partial\Omega;\R^3))$. Then there exists a unique solution $u\in H^1(0,T;H^1(\Omega;\R^3))$ to the following problem
\begin{equation}
\begin{array}{rlll}
-\dyw\D(\ve(u(x,t)))-\dyw\D(\ve(u_t(x,t)))&=b(x,t),\quad &\text{for}&\;(x,t)\in(0,T)\times\Omega,\\
u(x,t)&=g_D,& \text{for}&\; (x,t)\in(0,T)\times\partial\Omega,\\
u(0,x)&=u_0(x),&\text{for}&\; x\in\Omega.
\end{array} \label{eq:pom1}
\end{equation}
Furthermore the solution $u$ can be estimated as follows:
\begin{equation}
\norm{u}_{H^1(0,T;\Ho)}\leq C(T)\left(\norm{u_0}_{\Ho}+\norm{g_D}_{L^2(0,T;H^{1/2}(\partial\Omega))}+ \norm{b}_{L^2(0,T;(H^1(\Omega)^{\ast})}\right),
\label{eq:LEestim}
\end{equation}
where a constant $C(T)>0$ depends only on a length of a time interval, a geometry of the domain $\Omega$ and entries of the operator $\D$. 
\label{lem:LE}
\end{lem}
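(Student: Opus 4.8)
The plan is to recast problem \eqref{eq:pom1} as a linear ordinary differential equation in a Hilbert space. First I would remove the inhomogeneous boundary datum: fixing a bounded extension $w$ of $g_D$ (when $g_D\neq 0$ a little time regularity of $g_D$ is needed for this, which is why in all applications of the lemma in the present paper $g_D=0$), the function $\hat u:=u-w$ is sought in $L^2(0,T;H^1_0(\Omega;\R^3))$ and solves an equation of exactly the same form, with a modified but still admissible right-hand side. Hence it suffices to treat $g_D=0$, $u_0\in H^1_0(\Omega;\R^3)$. Introduce the operator $\pis{A}\colon H^1_0(\Omega;\R^3)\to\big(H^1_0(\Omega;\R^3)\big)^{\ast}$, $\pis{A}v:=-\dyw\D(\ve(v))$, attached to the bilinear form $a(v,z):=\int_\Omega\D\,\ve(v):\ve(z)\,\di x$; by Korn's inequality and the symmetry and positive-definiteness of $\D$, the form $a$ is bounded and coercive on $H^1_0(\Omega;\R^3)$, so $\pis{A}$ is an isomorphism by the Lax--Milgram lemma with $\norm{\pis{A}^{-1}}$ depending only on $\D$ and on the geometry of $\Omega$.

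Since \eqref{eq:pom1} reads $\pis{A}u+\pis{A}u_t=b$, applying $\pis{A}^{-1}$ turns it into the abstract ODE
$$u_t+u=\pis{A}^{-1}b=:\tilde b,\qquad u(0)=u_0,$$
with $\tilde b\in L^2(0,T;H^1_0(\Omega;\R^3))$ and $\norm{\tilde b}_{L^2(0,T;\Ho)}\leq C\,\norm{b}_{L^2(0,T;(H^1(\Omega))^{\ast})}$. This equation is solved by the explicit formula $u(t)=e^{-t}u_0+\int_0^t e^{-(t-s)}\tilde b(s)\,\di s$, which lies in $C([0,T];H^1_0(\Omega;\R^3))$; moreover $u_t=\tilde b-u\in L^2(0,T;H^1_0(\Omega;\R^3))$, so that automatically $u\in H^1(0,T;\HoR)$ and no separate argument for the time derivative is required.

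The estimate \eqref{eq:LEestim} follows either directly from this representation or, equivalently, by the energy method: testing $\pis{A}u+\pis{A}u_t=b$ with $u_t$ gives $\frac12\frac{\di}{\di t}a(u,u)+a(u_t,u_t)=\langle b,u_t\rangle$, and Young's inequality together with integration in time and coercivity yields $\norm{u}_{L^\infty(0,T;\Ho)}+\norm{u_t}_{L^2(0,T;\Ho)}\leq C(T)\big(\norm{u_0}_{\Ho}+\norm{b}_{L^2(0,T;(H^1(\Omega))^{\ast})}\big)$, whence the claimed bound (the factor $C(T)$ enters only through $\norm{u}_{L^2(0,T;\Ho)}\leq\sqrt T\,\norm{u}_{L^\infty(0,T;\Ho)}$). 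Uniqueness is immediate: for the difference of two solutions $b=0$ and $u_0=0$, so the same energy identity gives $a(u(t),u(t))\leq 0$, hence $u\equiv 0$ by Korn.

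The computation is routine and there is no serious obstacle; the only point needing attention is the boundary lift when $g_D$ is merely in $L^2(0,T;H^{1/2}(\partial\Omega;\R^3))$, since subtracting a time-dependent lift reintroduces the term $\pis{A}w_t$. An alternative that sidesteps any use of $\pis{A}^{-1}$ — probably the cleanest thing to actually write down — is a Galerkin scheme in the eigenbasis $\{w_k\}$ of $\pis{A}$: the approximate system decouples into the scalar linear ODEs $\dot c^n_k+c^n_k=\mu_k^{-1}\langle b,w_k\rangle$, the energy estimate above holds uniformly in $n$, and passage to the limit is trivial since the equation is linear.
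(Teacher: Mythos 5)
Your argument is correct and takes a genuinely different route from the paper's. The paper proceeds by a Banach fixed-point iteration: for a fixed $v$ it solves the elliptic problem $-\dyw\D(\ve(w))=\dyw\D(\ve(v))+b$ at each time, sets $u(t)=\int_0^t w\,\di\tau+u_0$ (so that $u_t=w$), shows the resulting map is a contraction on $L^2(0,T_1;\HoR)$ for small $T_1$ by a Volterra-type estimate, extends to $[0,T]$, and then derives \eqref{eq:LEestim} by integrating the equation in time and applying Gronwall. Your approach instead inverts $\pis{A}=-\dyw\D\ve(\cdot)$ via Lax--Milgram/Korn and thereby recasts the equation as the abstract ODE $u_t+u=\pis{A}^{-1}b$ in $H^1_0(\Omega;\R^3)$, writes down the solution by Duhamel's formula, and obtains the bound either from that formula or from the energy identity. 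Both are valid; yours is cleaner, makes the solution explicit, and exposes the structure (a constant-coefficient linear ODE in a Hilbert space, so the smallness-of-$T$ step and the Gronwall step disappear), while the paper's fixed-point scheme is a more generic template. You also correctly flag the only delicate point in the statement as given: for genuinely time-dependent $g_D\in L^2(0,T;H^{1/2}(\partial\Omega))$, subtracting a lift reintroduces a term $\pis{A}w_t$ that needs time regularity of $g_D$ — and indeed the paper's own construction $u=\int_0^t w\,\di\tau+u_0$ would produce $u|_{\partial\Omega}=\int_0^t g_D\,\di\tau+u_0|_{\partial\Omega}$ rather than $g_D$ — but since the lemma is only ever invoked with $g_D=0$ (as in \eqref{eq:elast}), this has no effect downstream.
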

\begin{proof}
The proof goes the similar way as a proof of Collorary 1 in \cite{BartOwcz1}. 
We use the Banach fixed point theorem, thus we construct a contractive operator
$$
\pis{P}\colon L^{2}(0,T;\HoR)\to L^{2}(0,T;\HoR).
$$
Let $v\in L^{2}(0,T;\HoR)$ and we look for the solution $w$ to the following problem:
\begin{equation*}
\begin{array}{rlll}
-\dyw\D(\ve(w))&=\dyw\D(\ve(v))+b(x,t),\quad &\text{in}&\;(0,T)\times\Omega,\\
w&=g_D,& \text{in}&\; (0,T)\times\partial\Omega,\\
w|_{t=0}&=u_0,&\text{in}&\; \Omega.
\end{array} \tag{$\star$}
\end{equation*}
We uniquely solve the problem ($\star$) as a straightforward conclusion from the ellipticity of the operator $-\dyw\D\ve(\cdot)$ and we obtain that the solution of $(\star)$ $w$ is of the class $L^2(0,T;\HoR)$. Thus we put $u(x,t)=\pis{P}(v)(x,t):=\int\limits _0^t w(\tau,x)\,\di \tau   +u_0(x)$. Obviously the operator $\pis{P}\colon L^{2}(0,T;\HoR)\to L^{2}(0,T;\HoR)$ is well defined. Now we insert $v_1,v_2\in L^2(0,T;\HoR)$ into the system ($\star$) and obtain the solutions $w_1,w_2\in L^{2}(0,T;\HoR)$. Therefore the difference $w_1-w_2$ satisfies the following system:
\begin{equation*}
\begin{array}{rlll}
-\dyw\D(\ve(w_1-w_2))&=\dyw\D(\ve(v_1-v_2)),\quad &\text{in}&\;(0,T)\times\Omega,\\
w_1-w_2&=0,& \text{in}&\; (0,T)\times\partial\Omega,\\
(w_1-w_2)|_{t=0}&=0,&\text{in}&\; \times\Omega.
\end{array} \tag{$\star\star$}
\end{equation*}
Then a standard elliptic estimates conclude with
\begin{equation}
\begin{split}
\norm{u_1-u_2}_{L^{2}(0,t;\Ho)}
&\leq Ct\norm{v_1-v_2}_{L^{2}(0,t;\Ho)}
\label{eq:1stbanach}
\end{split}
\end{equation}
and obviously the operator $\pis{P}_1\colon L^{2}(0,T_1;\HoR)\rightarrow L^{2}(0,T_1;\HoR)$ is a contraction for $T_1=\frac{1}{2C}$. Thus by the Banach fixed point theorem we obtain an existence of an unique fix point $u\in L^{2}(0,T_1;\HoR)$ of $\pis{P}$ {it i.e.} $\pis{P}_1(u)=u$. Additionally the construction of the operator $\pis{P}$ gives us  immediately  that $u_t\in L^{2}(0,T_1;\HoR)$. Moreover the estimate (\ref{eq:1stbanach}) does not depend on the initial condition thus by repetition of the reasoning above we obtain a sequence of contractive operators $$\pis{P}_k\colon L^{2}(T_{k-1},T_k;\HoR)\rightarrow L^{2}(T_{k-1},T_k;\HoR),$$ where $T_k=\frac{k}{2C}$ and corresponding solutions  $u\in H^1(T_{k-1},T_k;\HoR)$.
Next we prove the estimate (\ref{eq:LEestim}). For this purpose we integrate the system (\ref{eq:pom1}) with respect to time and obtain:
\begin{equation*}
-\dyw\D(\ve(u(x,t)))=-\dyw\D(\ve(u_0(x)))+\int\limits _0^t \dyw\D(\ve(u(\tau,x)))\,\di \tau   + \int\limits _0^t b(\tau,x)\,\di \tau   \,.
\end{equation*}
Once again using ellipticity of the operator $-\dyw\D\ve(\cdot)$, for any $t\in(0,T) $ we obtain the following estimate:
\begin{equation*}\begin{split}
\norm{u(t)}_{\Ho}\leq C\Big(& \norm{u_0}_{\Ho}+ \int\limits _0^t\norm{u(\tau)}_{\Ho}\di \tau   \\
&+ \int\limits _0^t\norm{b(\tau)}_{(H^1(\Omega))^\ast}\di \tau   + \int\limits _0^t\norm{g_D(\tau)}_{H^{1/2}(\partial\Omega)}\di \tau   \Big)\,.
\end{split}
\end{equation*}
Gronwall inequality implies that:
\begin{equation*}
\norm{u(t)}_{\Ho}\leq C\left(e^{Ct}+1\right)\Big(\norm{u_0}_{\Ho}+ \int\limits _0^t\norm{b(\tau)}_{(H^1(\Omega))^\ast}+ \norm{g_D(\tau)}_{H^{1/2}(\partial\Omega)}\di \tau   \Big)\,,
\end{equation*}
and
\begin{equation*}
\norm{u}_{L^2(0,T);\Ho)}\leq C(T)\Big(\norm{u_0}_{\Ho}+ \norm{b}_{L^2(0,T;(H^1(\Omega))^\ast)}+ \norm{g}_{L^2(0,T;H^{1/2}(\partial\Omega))}\Big).
\end{equation*}
Next, using a fact that $u$ satisfies the system (\ref{eq:pom1}), we estimate a norm $\norm{u_t(t)}$ for any $t\in(0,T)$ as follows
\begin{equation*}
\norm{u_t(t)}_{\Ho}\leq C\Big(\norm{u(t)}_{\Ho}+\norm{b(t)}_{(H^1(\Omega))^\ast}+ \norm{g_D(t)}_{H^{1/2}(\partial\Omega)}\Big)
\end{equation*}
and obviously we obtain
\begin{equation*}
\norm{u_t}_{L^2(0,T;\Ho)}\leq C(T)\Big(\norm{u_0}_{\Ho}+ \norm{b}_{L^2(0,T;(H^1(\Omega))^\ast)}+ \norm{g_D}_{L^2(0,T;H^{1/2}(\partial\Omega))}\Big).
\end{equation*}
Thus the proof is completed.
\end{proof}
{\bf Acknowledgments: }
This work is supported by the Grant of the Polish National Science Center 
OPUS 4 2012/07/B/ST1/03306.
%
%

\bibliographystyle{abbrv}

\end{document}